\numberwithin{equation}{section}
\theoremstyle{plain}
\newtheorem{theorem}{Theorem}[section]
\newtheorem{proposition}[theorem]{Proposition}
\newtheorem{lemma}[theorem]{Lemma}
\newtheorem{corollary}[theorem]{Corollary}
\newtheorem{definition}[theorem]{Definition}
\newtheorem{example}[theorem]{Example}
\newtheorem{remark}[theorem]{Remark}
\newtheorem{convention}[theorem]{Convention}
\newenvironment{proof}{{\noindent \textbf{Proof}\,\,}}{\hspace*{\fill}$\Box$\medskip}
\newcommand{\Sing}{\operatorname{Sing}}
\def\cc{\mathbb C}
\def\oc{\overline{\cc}}
\def\cp{\mathbb{CP}}
\def\wt#1{\widetilde#1}
\def\rr{\mathbb R}
\def\var{\varepsilon}
\def\nn{\mathbb N}
 \def\mcb{\mathcal B}
 \def\zz{\mathbb Z}
 \def\rp{\mathbb{RP}}
 \def\mcb{\mathcal B}
 \def\Sing{\operatorname{Sing}}
\def\La{\Lambda}
\def\ii{\mathbb I}
\def\mca{\mathcal A}
  \def\diag{\operatorname{diag}}
 \def\la{\lambda}
\def\mcc{\mathcal C}
\def\kla{\kappa_{\la}}
\def\mcm{\mathcal M}
\def\mcq{\mathcal Q}
\def\mcg{\mathcal G}
\def\Chi{\mathcal X}
\def\mcd{\mathcal D}
\def\Xi{\mathcal Z}
\def\mcl{\mathcal L}
\def\deg{\operatorname{deg}}
\def\ker{\operatorname{Ker}}
\title{On polynomially integrable Birkhoff billiards on surfaces of constant curvature}
\author{Alexey Glutsyuk\thanks{ CNRS, France (UMR 5669 (UMPA, ENS de Lyon) and UMI 2615 (Interdisciplinary Scientific Center J.-V.Poncelet)), 
Lyon, France. 
E-mail:
aglutsyu@ens-lyon.fr}
\thanks{National Research University Higher School of Economics, Russian Federation}
 \thanks{Supported by part by RFBR grants 13-01-00969-a, 16-01-00748, 16-01-00766
 and ANR grant ANR-13-JS01-0010.}}
\begin{document}
\maketitle
\begin{abstract} The polynomial version of Birkhoff Conjecture on integrable billiards on complete simply connected surfaces of constant curvature (plane, sphere, hyperbolic plane) was first stated, studied and solved in a particular case by Sergei Bolotin 
in 1990-1992. Here we present a complete solution of the polynomial version of Birkhoff Conjecture. 
Namely we show that {\it every polynomially integrable real bounded planar billiard with $C^2$-smooth connected 
boundary is an ellipse.} 
We  extend this result to  billiards with piecewise-smooth and not necessarily convex 
boundary on 
arbitrary two-dimensional simply connected complete surface of constant curvature: plane, sphere, 
Lobachevsky--Poincar\'e (hyperbolic) plane; each of them 
being modeled as a plane or a (pseudo-) sphere in $\rr^3$ equipped with appropriate quadratic form. Namely,  we show that 
{\it a billiard is polynomially integrable, if and only if its boundary is a union of confocal conical arcs and appropriate 
geodesic segments.}  We also present a complexification of these results. These are joint results of 
Mikhail Bialy, Andrey Mironov and the author. The proof  is split into two parts. The first part is given in two  papers by  Bialy and Mironov 
(in Euclidean and non-Euclidean cases respectively). Their geometric construction reduced the Polynomial Birkhoff Conjecture to a purely algebro-geometric problem  to show that an irreducible algebraic curve in $\cp^2$ with 
certain properties is a conic. They have shown that its singular and inflection points  lie in the complex light conic 
of the above-mentioned quadratic form. In the present paper we solve the above algebro-geometric problem completely. 
\end{abstract}
\tableofcontents

 \section{Introduction}
 
 \subsection{Main results}
 The famous Birkhoff Conjecture deals with strictly convex bounded planar billiards with smooth boundary. 
 Recall that a {\it caustic} of a planar billiard $\Omega\subset\rr^2$ 
  is a curve $C$ 
 such that each tangent line to $C$ reflects from the boundary of the billiard to a line tangent to $C$. 
 A billiard $\Omega$  is called {\it Birkhoff caustic-integrable,} if a neighborhood of its boundary in $\Omega$ is foliated by closed caustics, and the boundary $\partial\Omega$ is a leaf of this foliation. 
 It is well-known that each elliptic billiard is integrable, see \cite[section 4]{tab}. 
 The {\bf Birkhoff Conjecture}  states the converse: {\it the only Birkhoff caustic-integrable 
 convex bounded planar 
 billiard with smooth boundary is an ellipse.}\footnote{This  conjecture,  classically  attributed  to 
 G.Birkhoff,  was published   in  print  only  in the paper \cite{poritsky} by  H. Poritsky, who worked with Birkhoff as a post-doctoral fellow 
 in late 1920-ths.} 
 
  Let now $\Sigma$ be a two-dimensional surface with a Riemannian metric, $\Omega\subset\Sigma$ be a connected domain\footnote{Everywhere in the paper a billiard is a {\bf connected domain} $\Omega\subset\Sigma$.} with 
  piecewise smooth boundary. The {\it billiard flow} $B_t$ acts on the  tangent bundle $T\Sigma|_{\Omega}$ as follows. A point $(Q,P)\in T\Sigma|_{\Omega}$, 
  $Q\in\Omega$, $P\in  T_Q\Sigma$ moves along a trajectory of the geodesic flow of the surface $\Sigma$ until 
  $Q$ hits the boundary $\partial\Omega$. While hitting the boundary, the point $Q$ remains unchanged, and the 
  velocity vector $P$ is reflected from the boundary to the vector $P^*$  according to the usual reflection law: the 
 angle of incidence equals to the angle of reflection; $|P|=|P^*|$. Afterwards the new point $(Q,P^*)$ again 
 moves along a trajectory of the geodesic flow etc. The billiard flow thus defined, which 
 can be viewed as a geodesic flow with impacts on $T\Sigma|_{\Omega}$, has an obvious first integral: 
 the absolute value $|P|$ of the velocity. A strictly convex billiard $\Omega$ with smooth boundary 
 is called {\it integrable in the Liouville sense,} if its flow has an additional first integral independent with $|P|$ on the 
 intersection with $T\Sigma|_{\overline\Omega}$ of a neighborhood of the 
 unit tangent bundle to the boundary. 
  
 The notions of a caustic and Birkhoff caustic-integrability extend to the case of a strictly 
 convex domain $\Omega$ on an  arbitrary surface $\Sigma$ 
 equipped with a Riemannian metric, with lines replaced by geodesics. 
  The {\it Liouville and Birkhoff caustic integrabilities   are equivalent:} it is a well-known folklore fact.

 There is an analogue of the Birkhoff Conjecture 
 for billiards  on a simply connected complete Riemannian surface  of non-zero constant curvature: sphere or hyperbolic 
 (Lobachevsky--Poincar\'e) plane. This is also an open problem.

 The particular case of the Birkhoff Conjecture, when 
 the additional first integral is supposed to be  polynomial in the velocity components, motivated  the next definition and conjecture.
 
 \begin{definition} \label{din} Let $\Sigma$ be a two-dimensional surface with Riemannian metric, and let 
 $\Omega\subset\Sigma$ be a domain with piecewise smooth boundary. We say that the billiard in $\Omega$ is 
 {\it polynomially integrable,} if its flow has a first integral on 
 $T\Sigma|_{\Omega}$ that is a polynomial  in the velocity $P$ and whose restriction to the  hypersurface $\{|P|=1\}$ is non-constant.
 \end{definition}

\begin{definition} \label{defanaly} Let $\Sigma$ be as above, and  let $\Omega\subset\Sigma$ be a 
domain with piecewise smooth boundary. We say that 
 $\Omega$ is {\it analytically integrable,} if there exists a first integral analytic in $P$ on a neighborhood in 
 $T\Sigma|_{\Omega}$ of the zero  section of the tangent bundle $T\Sigma|_{\Omega}$  
 that is not a function of just the modulus $|P|$. In addition, it is required that  there exists a $r>0$ such that 
 the integral  is defined for all $(Q,P)$ with $Q\in\Omega$ and $|P|\leq r$ and 
its Taylor series in $P$  converges uniformly in the above $(Q,P)$. 
 \end{definition}

Note that all 
the integrals under question, which are defined over an open domain $\Omega$,  
should be invariant under  the geodesic flow in $\Omega$ and under the reflections from 
its boundary.

 \begin{remark} The following facts are well-known:
 
 - Analytic integrability implies polynomial integrability, since each homogeneous part in $P$ of an analytic integral 
 is a first integral itself, see \cite[p. 107]{kd} (the converse is obvious);
 
 - In the case, when $\Sigma$ is a simply connected {\it  complete  surface  of constant curvature} and the boundary 
 $\partial\Omega$ is smooth and connected, polynomial integrability is equivalent to the existence of a polynomial integral as above in a neighborhood of 
 the unit tangent bundle to $\partial\Omega$ in $T\Sigma|_{\Omega}$, by S.V.Bolotin's results \cite{bolotin0, bolotin, bolotin2}, see Theorem \ref{algnew} below. In this case 
  each first integral that is just polynomial in $P$ is  {\it globally analytic} on 
 $T\Sigma$, see \cite[proof of proposition 2]{bolotin2} and Theorem \ref{algnew}. 
  \end{remark}
 
 The {\bf Polynomial Birkhoff Conjecture} states that {\it if a  convex planar billiard with smooth boundary  is polynomially 
integrable, then its boundary  is a conic.}  The  Polynomial Birkhoff Conjecture together with its 
generalization to billiards with piecewise smooth (may be  non-convex) 
boundaries on simply connected complete surfaces 
of constant curvature was first stated and studied by S.V.Bolotin  \cite{bolotin, bolotin2} and later studied  in joint papers of M.Bialy and A.E.Mironov \cite{bm, bm2, bm3}. In the present paper we give a complete solution of the Polynomial Birkhoff Conjecture in full generality (Theorems \ref{thal0} and \ref{thal}). 
 
 \begin{remark} \label{algan}  The Polynomial Birkhoff Conjecture and its generalization are important and  interesting themselves, {\it independently on a potential solution of the classical Birkhoff Conjecture.} 
 They lie on the crossing of  different domains of mathematics, first of all, 
 dynamical systems,  algebraic geometry and singularity theory. {\it They 
are not implied by the classical Birkhoff Conjecture.} For the general case of piecewise-smooth boundaries this is obvious. 
Even in the case of smooth convex boundary, while  the polynomiality condition is a very strong restriction, the condition of 
 just non-constance of a polynomial integral on the unit velocity level hypersurface is topologically 
 weaker than the independence condition in the 
  Liouville integrability, which requires  independence of the additional integral and the energy on a whole neighborhood in 
  $T\rr^2|_{\overline\Omega}$ of the 
 unit tangent bundle to the boundary.
 \end{remark}

 Without loss of generality we consider simply connected complete  surfaces $\Sigma$ 
 of constant curvature  equal to 0 or $\pm1$: 
 one can make non-zero constant curvature equal to $\pm1$ 
  by multiplication of  metric by constant factor; this changes neither geodesics, nor 
 polynomial integrability. Thus, $\Sigma$ is either the Euclidean plane, or the unit sphere, or the Lobachevsky--Poincar\'e 
 hyperbolic plane. 
 It is modeled  as one of the three following  surfaces in the space $\rr^3$ with coordinates  $x=(x_1,x_2,x_3)$ equipped with the quadratic form 
 $$<Ax,x>, \ A\in\{\diag(1,1,0),\diag(1,1,\pm1)\}, \ < x,x >=x_1^2+x_2^2+x_3^2.$$

- Euclidean plane: $\Sigma=\{x_3=1\}$, $A=\diag(1,1,0)$. 

- The unit sphere: $\Sigma=\{ x_1^2+x_2^2+x_3^2=1\}$, $A=Id$.

- The hyperbolic plane: $\Sigma=\{ x_1^2+x_2^2-x_3^2=-1\}\cap\{ x_3>0\}$, $A=\diag(1,1,-1)$. 

The metric of constant curvature on the surface $\Sigma$ under question is induced by the  quadratic form $<Ax,x>$. 
The {\it geodesics} on $\Sigma$ are its intersections with two-dimensional vector subspaces in $\rr^3$. The {\it conics} 
on $\Sigma$ are its intersections with quadrics $\{<Cx,x>=0\}\subset\rr^3$, where $C$ is a real symmetric $3\times 3$-matrix. 

\begin{example}  The billiard in a disk in $\rr^2_{(x_1,x_2)}$ centered at 0 
has  first integral $x_1P_2-P_1x_2$ linear in $P$. 
The billiard in any conic in any of the above surfaces $\Sigma$ has an  
 integral quadratic in $P$,  see \cite[proposition 1]{bolotin2}. 
 \end{example}

 \begin{theorem} \label{thal0} Let a   billiard in $\Sigma$ with a $C^2$-smooth  connected boundary  be polynomially integrable. Let its boundary be not contained in a geodesic.   
 Then the billiard boundary is a conic (or a connected component of a conic). 
 \end{theorem}

 \begin{corollary} Every bounded polynomially integrable planar billiard with a $C^2$-smooth connected 
 boundary is an ellipse.
 \end{corollary}

 Below we extend the above theorem to billiards with countably piecewise smooth boundaries, see the following 
 definition. 
 
 \begin{definition} \label{counts} A domain $\Omega\subset\Sigma$ has {\it countably piecewise ($C^r$-) 
 smooth boundary,} if $\partial\Omega$ consists of the two following parts:
 
 - the {\it regular part:} an open and dense subset $\partial\Omega_{reg}\subset\partial\Omega$, where each point $X\in\partial\Omega_{reg}$ has a neighborhood 
 $U=U(X)\subset\Sigma$ such that the intersection $U\cap\partial\Omega$ is a ($C^r$-) smooth one-dimensional submanifold 
 in $U$; 
 
 - the {\it singular part:} the 
 closed subset $\partial\Omega_{sing}=\partial\Omega\setminus\partial\Omega_{reg}\subset\partial\Omega$.
 \end{definition} 
 
 \begin{remark} In the above definition the regular part of the boundary is always a dense and 
 at most countable disjoint union of ($C^2$-) smooth arcs 
 (taken without endpoints). The particular case of domains with piecewise smooth boundaries corresponds to the case, when 
 the above union is finite, the arcs are smooth up to their endpoints and the singular part of the boundary is a finite set (which consists of endpoints and may be 
 empty). For a general  billiard with countably piecewise smooth boundary the  billiard flow is well-defined on a residual set 
 for all time values. 
In the case, when the singular part of the boundary has zero one-dimensional Hausdorff measure, 
the billiard flow is well-defined as a flow of measurable  transformations.
\end{remark}
\begin{remark}
The notions of polynomially (analytically)  integrable billiards obviously extend to billiards with countably piecewise smooth boundaries, and these two 
notions are equivalent, as in the piecewise smooth case.  
\end{remark}

  \begin{definition} \label{count-con} A billiard in $\rr^2$ 
  with countably piecewise smooth boundary is called {\it countably confocal,} if the regular 
  part of its boundary  consists of arcs of confocal conics  and may be some straight-line segments such that 
 
 - at least one conical arc is present; 
 
 - in the case, when the common foci of  the conics are distinct and finite (i.e., the conics are ellipses and (or) hyperbolas),  
 the ambient line of each straight-line segment of the boundary is either the line through the foci, or the middle orthogonal line 
 to the segment connecting the foci, see Fig. 1a);
 
 - in the case, when the conics are concentric circles, the above ambient lines may be any lines through their common center, see Fig. 1b);
 
 - in the case, when the conics are confocal parabolas,  the ambient line of each straight-line segment of the boundary is either 
 the common axis of the parabolas, or the line through the focus that is orthogonal to the axis, see Fig. 1 c), d). 
  \end{definition}
 
 Let us extend the above definition to the non-Euclidean case. To do this, let us recall the  following definition.

 \begin{definition} \label{pencil} \cite[p.84]{veselov2}.
 Let  $\Sigma\subset\rr^3$ be one of the standard surfaces of constant curvature defined by a quadratic form   $<Ax,x>$. Let $B$ be a real symmetric 
$3\times 3$-matrix that is not proportional to $A$. In the Euclidean case, when $A=\diag(1,1,0)$, we require in addition that the $x_3$-axis 
 does not lie in  $\ker B$. 
 The {\it pencil of confocal conics} in $\Sigma$ defined by $B$ 
 consists of the conics 
 \begin{equation}\Gamma_{\lambda}=\Sigma\cap\{<B_{\la}x,x>=0\}, \ B_{\la}=(B-\la A)^{-1}.\label{confb}\end{equation}
 For those $\la$, for which $\det(B-\la A)=0$ and the kernel $K_{\la}=\ker(B-\la A)$ is one-dimensional, we set $\Gamma_{\la}$ to be the geodesic\footnote{Everywhere below, whenever the contrary is not specified, the orthogonal complement sign $\perp$  and the vector product are understood with respect to the standard Euclidean scalar product on 
 $\rr^3$.} 
 \begin{equation}\Gamma_{\la}=\Sigma\cap K_{\la}^{\perp},\label{geodconf}\end{equation}
 provided that the latter intersection is non-empty. 
 In the case, when  $dim K_{\la}=2$,  for every two-dimensional vector subspace $H\subset\rr^3$ orthogonal to $K_{\la}$ the intersection 
 $\Sigma\cap H$ will be  also denoted $\Gamma_{\la}=\Gamma_{\la}(H)$. 
 \end{definition}
 
 \begin{remark} \label{detneq} In the conditions of Definition \ref{pencil} the confocal conic pencil is 
 well-defined: $\det(B-\la A)\not\equiv0$ as a function of $\lambda$.  In the 
 non-Euclidean cases this is obvious, since the matrix $A$ is non-degenerate. In the 
 Euclidean case one has $A=\diag(1,1,0)$ and the $x_3$-axis does not lie in $\ker B$: 
 that is, some of the matrix elements $B_{13}$, $B_{23}$, $B_{33}$ is non-zero. One has 
 $$\det(B-\la A)=-\la^3\det(A-\la^{-1}B)$$
 \begin{equation}=\la^2B_{33}+\la(B_{13}^2+B_{23}^2-B_{33}(B_{11}
 +B_{22}))+\det B\not\equiv0:\label{detl}\end{equation}
 in the above right-hand side the identical vanishing of the coefficients at 
 $\la^2$ and at $\la$ 
  would imply that $B_{33}=B_{13}=B_{23}=0$, which is forbidden by our assumptions. 
  Hence, $\det(B-\la A)\not\equiv0$. Conversely, if in the Euclidean case 
  the $x_3$-axis were contained in the kernel of the matrix $B$, 
  then obviously $\det(B-\la A)\equiv0$, and the confocal pencil would not be well-defined. 
  \end{remark}

 \begin{remark} \label{rpencil} The matrix $B$ 
 is uniquely defined modulo transformation $B\mapsto \mu B-\la A$, $\mu\neq0$ 
 (i.e., modulo $\rr A$ and up to constant factor) by the corresponding confocal pencil. In the Euclidean case, when $\Sigma=\{ x_3=1\}$,  $A=(1,1,0)$,  the above notion of confocal conics 
  coincides with the classical one. In the Euclidean case the 
 kernel $K_{\la}$ is two-dimensional for some $\la$, if and only if the confocal conics under question are concentric circles; then the 
 corresponding geodesics $\Gamma_{\la}(H)$ are the lines through their common center. 
  \end{remark}
 
 \def\Ga{\Gamma}
 \begin{definition} \label{adm} Consider a confocal conic pencil (\ref{confb}) defined by a matrix $B$. 
 The corresponding {\it admissible geodesics} are the following: 
  
 1) Each geodesic  $\Gamma_{\la}$ in (\ref{geodconf}) and (or) $\Gamma_{\la}(H)$ (if any) is admissible. 
 
 2) Consider the special case, when  $B=Aa\otimes b+b\otimes Aa$ (modulo $\rr A$, see Remark 
 \ref{rpencil}) where $a,b\in\rr^3\setminus\{0\}$, $<a,b>=0$.
 
 2a) In the subcase, when $\Sigma$ is non-Euclidean: those  of the geodesics 
 \begin{equation}\{ r\in\Sigma \ | \ <r,a>=0\}, \ \{ r\in\Sigma \ | \ <r,Ab>=0\}\label{newgeod}\end{equation}
 that are well-defined (i.e., non-empty) are also admissible.
 
 2b) In the subcase, when $\Sigma=\{ x_3=1\}$ is  Euclidean  and $b$ is not parallel to $\Sigma$: 
 only $\Gamma_{\lambda}$ and the first geodesic in (\ref{newgeod}) are admissible.
  \end{definition}
 
 \begin{remark}  Note that the subcase in 2) when $\Sigma=\{ x_3=1\}$ and $b$ is parallel to $\Sigma$ 
 is impossible, since in this subcase the $x_3$-axis would lie in the kernel $\ker B$, 
 which is forbidded by our assumptions. This implies that in subcase 2b) the first geodesic in (\ref{newgeod}) is non-empty: 
 the vector $a$ is not vertical, since its orthogonal $b$ is not horizontal. 
 In the above subcases 2a), 2b) the corresponding admissible geodesics from (\ref{newgeod}) 
 do not coincide   with geodesics $\Gamma_\la$ ($\Gamma_{\la}(H)$). Indeed, suppose the contrary, say, the first 
 geodesic $a^{\perp}\cap\Sigma$ in (\ref{newgeod}) is non-empty and 
 coincides with some $\Gamma_\la$ or $\Gamma_\la(H)$. Then $a\in \ker(B-\la A)$, that is, 
$$<Aa,a>b+<b,a>Aa=<Aa,a>b=\la Aa.$$
 Thus, either $<Aa,a>=0$ and $\la Aa=0$, or the vector $b$, which is orthogonal to $a$, is proportional to $Aa$, 
 thus $<Aa,a>=0$ again. But then $a^{\perp}\cap\Sigma=\emptyset$, see \cite[p.122]{bolotin2}, -- 
 a contradiction. The case, when the second geodesic in (\ref{newgeod}) coincides with $\Gamma_\la$, is treated 
 analogously. The above non-coincidence statement can be also  deduced from the next proposition. 
 \end{remark}
 
 \begin{remark} \label{permgeod}
In the subcase 2a) set $\wt a=Ab$, $\wt b=Aa$. Then 
 $B=A\wt a\otimes \wt b+\wt b\otimes A\wt a$, 
 and $<\wt a,\wt b>=0$, since $A^2=Id$. 
The geodesics in (\ref{newgeod}) are written in terms of the new vectors $\wt a$ and $\wt b$  in the opposite order. Thus, each 
geodesic of type (\ref{newgeod}) can be represented by  the first equation in (\ref{newgeod}) for appropriate presentation $B=Aa\otimes b+b\otimes Aa$. 
\end{remark}

 \begin{definition} \label{confocality-2} A billiard $\Omega\subset\Sigma$ with a countably piecewise smooth boundary is {\it countably confocal,} 
if its boundary consists of arcs of confocal conics (at least one conical arc is present) and may be 
 some  segments of geodesics admissible with respect to  the confocal conic pencil given by the conical arcs in 
 $\partial\Omega$, see Definition \ref{adm}.
 \end{definition}
  
  Confocal billiards with piecewise smooth boundaries were introduced by S.V.Bolotin \cite{bolotin2}, who proved their polynomial integrability with integrals of first, second or fourth degree. See the following proposition, whose proof 
 presented in loc. cit.  remains valid in the countably piecewise smooth case. 
 
 \begin{proposition} \label{propconf} \cite[proposition 1 in section 2; the theorem in section 4]{bolotin2}
 Each countably confocal  billiard is polynomially integrable: it has a non-trivial 
 first integral that is either  linear, or  quadratic, or a degree 4 polynomial 
 in the velocity components 
  that is non-constant on the unit velocity hypersurface.  If all the geodesic pieces of its boundary lie in $\Gamma_\la$ ($\Gamma_\la(H)$), then the  integral can be chosen of degree at most 2. 
  The case of a degree 4 integral that cannot be 
 reduced to an integral of degree at most 2 is exactly the case, when the conics forming the billiard boundary are contained in 
 a confocal pencil of types 2a) or 2b) from Definition \ref{adm} and the billiard boundary contains at least one segment of 
   some of the admissible geodesics from 
 (\ref{newgeod}) mentioned in 2a) and 2b) respectively. 
 \end{proposition}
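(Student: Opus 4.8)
\medskip

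\noindent The plan is to exhibit the integral explicitly in terms of the matrix $B$ defining the confocal pencil, and then to verify its invariance under reflection off each admissible boundary piece by linear algebra on the moment vector $M=[r,P]$. Put $q(u,v)=<Au,v>$, so $T_r\Sigma=\{P:q(P,r)=0\}$ and $|P|^2=q(P,P)$. Two elementary facts do the work. First, $<AM,M>=|P|^2$ on $T\Sigma$: in the spherical and hyperbolic cases this follows from $AM=\det(A)(Ar)\times(AP)$ and the Binet--Cauchy identity together with $q(r,r)=\pm1$ and $q(r,P)=0$, while in the Euclidean model $M=(-P_2,P_1,\sigma)$ and the identity is immediate. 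Second, the geodesic $\Sigma\cap\{<x,m>=0\}$ with Euclidean normal $m\parallel M$ is tangent to $\Gamma_\lambda$ from \eqref{confb} if and only if $<(B-\lambda A)M,M>=0$, by projective duality between the cone $\{<B_\lambda x,x>=0\}$ and the dual cone $\{<(B-\lambda A)u,u>=0\}$ in the space of planes. Hence the caustic parameter of a trajectory is $\lambda(M)=<BM,M>/<AM,M>=<BM,M>/|P|^2$. Since $|P|$ is conserved, and the family of confocal conics tangent to a trajectory is preserved under reflection off any confocal conic (the constant--curvature Graves--Chasles theorem, cf.\ \cite{veselov2}), the homogeneous degree--two polynomial $\Psi_2(M):=<BM,M>$ is a first integral of the billiard bounded by confocal conics, non-constant on $\{|P|=1\}$ because $B$ is not proportional to $A$. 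If $\operatorname{rk}B=1$ then $\Psi_2=\pm\ell^2$ for a linear form $\ell$ in $M$, and when no boundary geodesic flips the sign of $\ell$ the billiard has the linear integral $\ell$; this is the ``linear'' alternative of the statement.

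\medskip

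\noindent Next I would include the geodesic segments. Reflection off a geodesic $g=\Sigma\cap n^{\perp}$ extends to the ambient $q$--orthogonal reflection in $n^{\perp}$, and tracing this through the cross product shows that it acts on moment vectors by the linear involution
\begin{equation}\rho_n(M)=M-\frac{2<An,M>}{<An,n>}\,n,\label{eq:rhon}\end{equation}
which automatically preserves $<AM,M>=|P|^2$; here \eqref{eq:rhon} is the non-Euclidean form (where $A^2=\operatorname{Id}$), the Euclidean case being handled directly in the plane. Expanding $\rho_n^{\top}B\rho_n$ one finds $\Psi_2\circ\rho_n=\Psi_2$ exactly when $Bn\parallel An$, i.e.\ when $n\in K_\lambda=\ker(B-\lambda A)$ for some $\lambda$ --- and these $n$ are precisely the Euclidean normals of the geodesics $\Gamma_\lambda$ of \eqref{geodconf} and of the planes defining $\Gamma_\lambda(H)$. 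This establishes polynomial (quadratic) integrability for every countably confocal billiard whose geodesic pieces are of type 1) in Definition \ref{adm}.

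\medskip

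\noindent It remains to treat the exceptional pencil $B=Aa\otimes b+b\otimes Aa$ with $<a,b>=0$, for which $\Psi_2(M)=2\ell_1\ell_2$ splits into linear factors $\ell_1=<Aa,M>$ and $\ell_2=<b,M>$. Substituting $n=a$ and $n=Ab$ into \eqref{eq:rhon} gives $\rho_a\colon(\ell_1,\ell_2)\mapsto(-\ell_1,\ell_2)$ and $\rho_{Ab}\colon(\ell_1,\ell_2)\mapsto(\ell_1,-\ell_2)$, so once the geodesics \eqref{newgeod} are present $\Psi_2$ is no longer invariant, whereas the degree--four polynomial $\Psi_4(M):=<BM,M>^2=4\,\ell_1^2\ell_2^2$ is invariant under $\rho_a$, $\rho_{Ab}$ and under the confocal--conic reflections (which fix $\ell_1\ell_2$), hence is a first integral, non-constant on $\{|P|=1\}$. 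It cannot be reduced to degree two: a homogeneous quadratic integral $\Phi$ would have to be invariant under the confocal--conic reflections, which (by the integrability theory of confocal billiards) forces $\Phi\in\operatorname{span}\{\Psi_2,|P|^2\}$, and then $\rho_a$--invariance kills the $\Psi_2$ component, leaving only multiples of the trivial integral $|P|^2$. In the Euclidean model $A$ is degenerate and $A^2\neq\operatorname{Id}$; there one redoes the reflection off the lines $\{<r,a>=0\}$ and $\{<r,Ab>=0\}$ directly and finds that when the confocal conics are parabolas ($b\nparallel\Sigma$, subcase 2b) exactly one of them --- the line through the focus orthogonal to the axis --- preserves $\Psi_4$, while for $b\parallel\Sigma$ (subcase 2c) neither does, matching Definition \ref{adm}.

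\medskip

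\noindent The routine parts are the identity $<AM,M>=|P|^2$, the duality criterion, and the expansion of $\rho_n^{\top}B\rho_n$. The main obstacle --- where the statement has real content --- is the exceptional case: pinning down the action of the extra geodesic reflections on the factors $\ell_1,\ell_2$, handling the degenerate Euclidean metric in sub-cases 2b)--2c), and proving the negative assertion that no nontrivial quadratic integral exists there, so that $\Psi_4$ is genuinely irreducible.
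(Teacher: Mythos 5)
The paper does not actually prove Proposition \ref{propconf}: it is quoted from Bolotin's work \cite{bolotin2}, with Remark \ref{rconf} only observing that Bolotin's proof survives the passage from finitely many to countably many boundary pieces. So there is no in-paper argument to compare against; what you have written is a reconstruction of Bolotin's argument, and in outline it is the right one. The pieces you do compute are correct: $<AM,M>=|P|^2$ via Binet--Cauchy, the duality criterion $<(B-\la A)M,M>=0$ for tangency to $\Gamma_\la$ (this is exactly Proposition \ref{classb} plus Corollary \ref{self} of the paper), and the verification that $\Psi_2\circ\rho_n=\Psi_2$ iff $Bn\parallel An$, i.e.\ iff $n\in K_\la$. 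One small caution: the true action of the reflection on $M=[r,P]$ is $-\rho_n$, not $\rho_n$ (the pseudo-symmetry fixes the image of the tangent line $L=[r,T_r\hat l]=\rr n$ and negates its $A$-orthogonal complement, whereas your $\rho_n$ negates $n$); this is harmless here because every integral in play is homogeneous of even degree, but it would matter for the ``linear integral'' alternative, so state the sign convention explicitly there.

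Two steps are genuinely incomplete. First, the entire Euclidean exceptional case (subcases 2b) and 2c) of Definition \ref{adm}) is dispatched with ``one redoes the reflection \dots directly and finds that \dots''. Your formula \eqref{eq:rhon} and the computation $<Aa,Ab>=<a,b>=0$ both use $A^2=\operatorname{Id}$, which fails for $A=\diag(1,1,0)$; since the parabola-plus-focal-line billiard of Fig.~1d) is precisely the configuration that forces degree four and motivates the whole case distinction, this verification cannot be waved through — it needs the explicit Euclidean moment components $M=(-P_2,P_1,\sigma)$ and a separate check of which of the two candidate lines preserves $\Psi_4$. Second, the negative assertion that the degree-4 integral is irreducible rests on the claim that any quadratic integral of the billiard in a conic lies in $\operatorname{span}\{<BM,M>,<AM,M>\}$, which you invoke as ``the integrability theory of confocal billiards'' without proof or precise citation. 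That two-dimensionality of the space of quadratic integrals is true and is essentially \cite[proposition 1]{bolotin2}, but as written it is the load-bearing step of the ``exactly'' clause and is the one place where your proposal asserts rather than argues. Finally, deriving invariance of $\Psi_2$ under the conic reflections from the constant-curvature Graves--Chasles theorem is legitimate but borders on circular, since the cleanest proof of that theorem is the pointwise invariance of $<BM,M>$ itself (Bolotin's formula (12), quoted in the paper's proof of Theorem \ref{bol2}); a direct pointwise check in the spirit of your geodesic computation would make the proof self-contained.
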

 
 \begin{figure}[ht]
  \begin{center}
   \epsfig{file=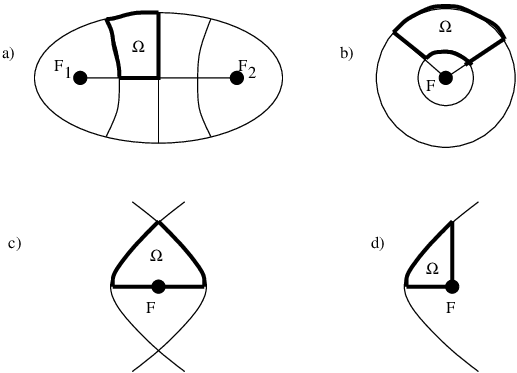}
    \caption{Examples of  confocal  planar billiards; $F_1$, $F_2$, $F$ are the foci; the conics in c) and d) are parabolas.  All of these billiards 
    have quadratic integrals, except for the billiard at Fig. 1d), which has a degree 4 integral.}
    \label{fig:0}
  \end{center}
\end{figure}

 \begin{example} \label{rconf} For Euclidean billiards the two countably confocality notions given by 
 Definitions \ref{count-con} and \ref{confocality-2} are equivalent. 
A Euclidean billiard whose boundary contains an arc of parabola and a segment of the line 
through the focus that is orthogonal to the axis of the parabola, as at Fig. 1d), is exactly a  
billiard of  type 2b), see 
the end of  paper \cite{bolotin2}; the above line is the first geodesic in  (\ref{newgeod}). 
This example of a billiard having a 
degree 4 integral was first discovered in \cite{gram}. Analogous 
billiards on surfaces of non-zero constant curvature were constructed in \cite{abdr2}. 
  \end{example}

The main result of the paper is the following theorem. 

 \begin{theorem} \label{thal}\footnote{Theorem \ref{thal} with a brief proof was announced in the author's note \cite{gl}.}  Let a  billiard in $\Sigma$ with countably piecewise $C^2$-smooth boundary be polynomially integrable (or equivalently, analytically  
 integrable, see Definition \ref{defanaly}), and let the regular part of its boundary  contain at least one 
non-geodesic arc. Then the billiard is countably confocal.
 \end{theorem}

 Theorem \ref{thal} is a  joint result of M.Bialy, A.E.Mironov and the author. Its proof sketched below 
 consists of the two following  parts: 
 
1) The papers \cite{bm, bm2} of Bialy and Mironov, whose geometric construction reduced the proof of Theorem \ref{thal}  
 to a purely algebro-geometric problem that was partially investigated by them.
 
 2) The complete solution of the above-mentioned algebro-geometric problem obtained in the 
 present paper (Theorem \ref{conj1}).

\subsection{Sketch of proof of Theorem \ref{thal} and plan of the paper} 
In what follows a point $r\in\Sigma$ will be identified with its radius-vector in $\rr^3$. 

 \begin{theorem} \label{algnew} (S.V.Bolotin, see  \cite{bolotin}, \cite[p.118;  proposition 2 and its proof on p.119]{bolotin2}, \cite[chapter 5, section 3, proposition 5]{kozlov}.) For every polynomially 
 integrable billiard $\Omega\subset\Sigma$ with countably piecewise 
 $C^2$-smooth boundary 
 a polynomial  integral non-constant on the unit velocity  hypersurface $\{|P|=1\}$ can be 
 chosen to be a homogeneous polynomial $\Psi(M)$ 
 of even degree in the components of the moment vector
  \begin{equation}M=[r,P]=(x_2P_3-x_3P_2, -x_1P_3+x_3P_1, x_1P_2-x_2P_1),\label{moment}\end{equation}
 $$r=(x_1,x_2,x_3)\in\Sigma, \ \ P=(P_1,P_2,P_3) \text{ is the velocity vector.}$$ 
 (This statement is  local and holds for  reflection from an arbitrary smooth curve in $\Sigma$.)  
 Each $C^2$-smooth arc of the boundary $\partial\Omega$ with non-zero geodesic curvature 
 lies in an algebraic curve.
 \end{theorem}

 \begin{theorem} \label{bol2}  (see   \cite[section 4]{bolotin2}). 
 Let a billiard on $\Sigma$ with a countably piecewise $C^2$-smooth boundary be polynomially integrable. 
 Let its boundary contain a non-geodesic  {\bf conical} arc. Then the billiard is countably confocal. 
 \end{theorem}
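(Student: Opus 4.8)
\noindent\textbf{Proof strategy for Theorem \ref{bol2}} (after S.\,V.\,Bolotin). By Theorem \ref{algnew} we may take the integral to be a homogeneous polynomial $\Psi(M)$ of even degree in the moment vector $M$, defined on all of $T\Sigma$, with each smooth arc of $\partial\Omega$ contained in an algebraic curve. Let $c\subset\mcc=\Sigma\cap\{\langle Cx,x\rangle=0\}$ be the given non-linear conical arc; since $c$ is non-linear, $\mcc$ and $A$ span a genuine confocal pencil $\{\Gamma_{\la}\}$ in the sense of Definition \ref{pencil}. The plan is: \emph{(a)} reflection off $\mcc$ carries a homogeneous quadratic integral $J=J_{\mcc}(M)$ built from this pencil, and near the unit tangent bundle of $c$ the restriction of $\Psi$ to $\{|P|=1\}$ should turn out to be a polynomial in $J$; \emph{(b)} hence the foliation $\mathcal F$ of the space of geodesics of $\Sigma$ by the level sets of $J$ --- the confocal caustic families --- is invariant under reflection off every regular arc of $\partial\Omega$; \emph{(c)} a pointwise analysis should then identify the only curves with this property as the conics of the pencil and the admissible geodesics of Definition \ref{adm}.

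\medskip\noindent\emph{Step (a).} On any surface of constant curvature, reflection off a conic $\mcc$ admits a homogeneous quadratic first integral $J=J_{\mcc}(M)$ attached to the confocal pencil of $\mcc$ (the Graves--Chasles property; cf. \cite{veselov2}): the set $\{J=t\}\cap\{|P|=1\}$ consists of the unit-speed geodesics tangent to a fixed conic confocal to $\mcc$, and reflection off $\mcc$ preserves it. On a generic such level set that reflection is conjugate to a rotation of the circle by an irrational angle, with rotation number depending non-trivially on $t$; hence any continuous first integral of the billiard, in particular $\Psi$, is constant there. Thus $\Psi|_{\{|P|=1\}}$ depends only on $J$ near the unit tangent bundle of $c$, and --- both being polynomial, with $J$ non-constant --- this dependence is polynomial; since two polynomials agreeing on an open set agree identically, $\Psi|_{\{|P|=1\}}$ is a polynomial in $J$ everywhere. (Taking $\Psi$ of minimal degree, $\Psi$ should be affinely equivalent to $J$ except in degenerate ``parabolic'' cases, where it may be a product of $J$ with a lower-degree integral --- the origin of the degree $4$ integrals in Proposition \ref{propconf}.)

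\medskip\noindent\emph{Steps (b) and (c).} By (a) the level sets of $\Psi|_{\{|P|=1\}}$ are unions of leaves of $\mathcal F$; since $\Psi$ is preserved by reflection off every regular arc $\gamma\subset\partial\Omega_{reg}$, every such reflection preserves $\mathcal F$. Fix such a $\gamma$ and a generic $r\in\gamma$. Reflection in the tangent line $T_r\gamma$ is an involution $\iota_r$ of the projective line of geodesics through $r$, with fixed points the geodesics along $T_r\gamma$ and along the $A$-normal $N_r\gamma$. Invariance of $\mathcal F$ forces $\iota_r$ to permute the pairs $\{$geodesics through $r$ tangent to $\Gamma_{\mu}\}$, $\mu$ varying; by Desargues's involution theorem (applied to the pencil of conics dual to $\{\Gamma_{\la}\}$ and the line dual to $r$) these pairs are exactly the orbits of an involution $\rho_r$, whose fixed points are the tangents $T_r\Gamma_{\mu_1(r)},T_r\Gamma_{\mu_2(r)}$ at $r$ of the two confocal conics through $r$ --- which are $A$-orthogonal. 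Hence $\iota_r$ commutes with $\rho_r$, which is possible only if their fixed-point pairs coincide or are harmonically conjugate. If they coincide, then $T_r\gamma$ is tangent to one of $\Gamma_{\mu_1(r)},\Gamma_{\mu_2(r)}$; holding this along a whole arc (and passing from an open dense subset to the whole arc by $C^1$-smoothness, as in the proof of Theorem \ref{thal0} above), $\gamma$ is a leaf of the confocal foliation, i.e. a conic $\Gamma_{\la}$ of the pencil, possibly a degenerate member --- a geodesic of type 1) of Definition \ref{adm}. If they are harmonically conjugate, then $T_r\gamma$ $A$-bisects the pair $T_r\Gamma_{\mu_1(r)},T_r\Gamma_{\mu_2(r)}$; a short computation should show that this can hold along an entire geodesic only when $B=Aa\otimes b+b\otimes Aa$ with $\langle a,b\rangle=0$ (the case comprising the Euclidean confocal parabolas), and then $\gamma$ is one of the admissible geodesics of items 2a)--2c). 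Finally, applying the same argument with $\mcc$ replaced by the ambient conic of any other conical arc of $\partial\Omega$ shows that all conical arcs lie in the one pencil $\{\Gamma_{\la}\}$. Therefore $\partial\Omega$ is a union of arcs of confocal conics (at least one non-linear) and admissible geodesic segments: the billiard is countably confocal.

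\medskip\noindent The crux is step (c). Pinning down which geodesics can occur requires tracking the degenerate members of the pencil --- those with $\det(B-\la A)=0$, including the case $\dim K_{\la}=2$ of concentric circles --- and treating separately the parabolic case and, in the non-Euclidean setting, the way the pencil meets the absolute $\ii$; this, together with step (a), is exactly what produces the subcases 2a)--2c) of Definition \ref{adm} and the possibility of a genuine degree $4$ integral.
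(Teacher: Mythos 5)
Your overall architecture is the correct one and is, in dual form, exactly the paper's: extract the quadratic integral $\wt\Phi$ of the ambient conic, show that the given polynomial integral factors through it, conclude that reflection off every regular piece of the boundary preserves the confocal pencil, and classify the curves with that property. The paper carries this out on the $\Sigma$-dual side: Claim~2 of its proof shows that $G=\Psi/<AM,M>^n$ is a \emph{polynomial} in the quadratic rational integral $\wt G$ by a purely algebraic iterated division ($G=c_1+G_1\wt G$, $G_1=c_2+G_2\wt G$, \dots), each step resting only on Proposition \ref{propc}; then the dual of every non-linear $C^2$ arc lies in a level curve of $G$, hence in a finite union of conics of the dual pencil $\mcc^*$, hence (by smoothness) in a single one; and the geodesic segments are handled by Proposition \ref{pis}, whose proof is the cone analysis of Claims~3--4.

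Two of your steps, however, have genuine gaps. First, step (a) derives ``$\Psi|_{\{|P|=1\}}$ is a function of $J$'' from the billiard map inside $\mcc$ being conjugate to an irrational rotation on generic caustic levels. That is sound for a Euclidean ellipse, but the hypothesis is only that $\partial\Omega$ contains a non-linear conical arc: when $\mcc$ is a hyperbola or a parabola (or one of the non-Euclidean conic types) the reflection has no closed invariant circles and no rotation number, and the density argument collapses. The paper's algebraic division argument replaces this entirely and works uniformly for all conics. Second, step (c) uses only that $\iota_r$ preserves the foliation $\mathcal F$, and the resulting pointwise dichotomy (fixed-point pairs of $\iota_r$ and $\rho_r$ coincide or are harmonically conjugate) does not by itself force a regular arc into one of the two desired classes: a non-geodesic arc could a priori satisfy the harmonic (bisecting) condition at every point --- integral curves of the bisecting line field of the confocal net exist locally --- and you give no argument excluding such arcs from the boundary. (The paper avoids this because for \emph{arcs} it never invokes the dichotomy: it uses $G|_{\phi^*}\equiv const$ directly.) Moreover, the classification of the geodesics realizing the harmonic case is asserted (``a short computation should show'') rather than proved; that classification is precisely Proposition \ref{pis}, whose proof requires the full case analysis of when a pseudo-symmetry preserves the cone $\{<Bx,x>=0\}$, and it is where the special subcases 2a)--2c) of Definition \ref{adm} actually come from.
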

 
 \begin{remark} S.V.Bolotin's theorems implying Theorems \ref{algnew} and \ref{bol2} were stated and proved in loc. cit. for piecewise smooth boundaries, but their proofs remain valid in the countably piecewise smooth case. 
   To make the paper self-contained and to extend the main results to complex domain, 
 we give a proof of Theorem \ref{bol2} in Subsection 2.2. It  follows the arguments 
 from \cite[section 4]{bolotin2}, but  here it is done in the dual terms using results of Bialy and Mironov from \cite{bm, bm2}. 
 \end{remark}

The boundary $\partial\Omega$ is countably piecewise $C^2$-smooth. Therefore, it  
contains an open and dense subset contained in $\partial\Omega_{reg}$ 
that is a disjoint union of geodesic segments and $C^2$-smooth arcs with non-zero 
geodesic curvature. 

Let $\alpha\subset\partial\Omega$ be a  $C^2$-smooth arc with non-zero geodesic curvature: it existence 
follows from assumptions. 
By Bolotin's Theorem \ref{bol2}, for the proof of Theorem  \ref{thal} it suffices to show that $\alpha$ contains a 
conical sub-arc. To do this, we use Bialy--Mironov construction of the dual billiard 
and their results presented in Subsection 2.1. Let us describe them briefly.

In what follows $\pi:\rr^3\setminus\{0\}\to\rp^2$ denotes the tautological projection. Its complexification 
and restriction to  $\Sigma$ will be also denoted by  $\pi$. 

Recall that the standard Euclidean scalar product $<x,x>$ on $\rr^3$ defines the {\it orthogonal polarity:} 
the correspondence sending each two-dimensional vector subspace in $\rr^3$ to its 
Euclidean-orthogonal one-dimensional subspace. This together with the projection $\pi$ induces a projective duality 
$\rp^{2*}_{(x_1:x_2:x_3)}\to\rp^2_{(M_1:M_2:M_3)}$ sending lines  to points. 
Namely, each projective line, which is the projection of a two-dimensional vector subspace $H$ (punctured at the origin), 
is dual to the point $\pi(H^\perp\setminus\{0\})$. (It is well-known that in the affine chart $(x_1:x_2:1)$ the projective duality defined 
by the orthogonal polarity is the composition of the polar duality with respect to the  unit circle and the central symmetry with respect to the origin.)

For simplicity, the curve dual to the projection 
$\pi(\alpha)\subset\rp^2$ with respect to the above projective duality will be denoted by $\alpha^*$ 
and called the {\it curve $\Sigma$-dual to $\alpha$.} By definition, the dual curve $\alpha^*$ is the family of points 
in $\rp^2$ that are dual to the projective tangent lines to the curve $\pi(\alpha)\subset\rp^2$. The curve $\alpha^*$ is 
$C^1$-smooth, since the curve $\pi(\alpha)$ is $C^2$-smooth and has no inflection points: the geodesic curvature 
of the curve $\alpha$ is non-zero. 

Bialy and Mironov proved the following results in \cite{bm, bm2}:

- Let $\Psi(M)$ be the homogeneous first integral of even degree $2n$ from Bolotin's Theorem \ref{algnew}. For every point $B\in\alpha^*$ the restriction to the projective tangent line $T_B\alpha^*$ of the rational function 
\begin{equation}G(M)=\frac{\Psi(M)}{<AM,M>^n}\label{ratg}\end{equation} 
is invariant under a special projective involution $T_B\alpha^*\to T_B\alpha^*$ 
fixing $B$: the so-called angular symmetry centered at $B$. More 
precisely,  invariance of the function $G$ is  equivalent to the statement saying 
that for every $r\in\alpha$ the function  
$\Psi(M)=\Psi([r,v])$ in $v\in T_r\Sigma$  is invariant under reflection of the vector $v$ 
 from the line $T_r\alpha$. 

-  Consider the so-called {\it absolute:} the complex conic
\begin{equation}\ii=\{<AM,M>=0\}\subset\cp^2_{(M_1:M_2:M_3)}.\label{absol}\end{equation} 
The above angular symmetry coincides with the restriction to $T_B\alpha^*$ of the 
 unique projective involution $\cp^2\to\cp^2$ fixing $B$ that fixes each line 
through $B$ and permutes its intersection points with the conic $\ii$: the so-called 
 {\it $\ii$-angular symmetry} centered at $B$.
 
- Concider the complex projective Zariski closure of the curve $\alpha^*$, which is an algebraic curve, by  
Theorem \ref{algnew}. Each its non-linear irreducible  component $\gamma$  {\it generates a rationally integrable $\ii$-angular billiard}, see Definition \ref{defang}: 
for every point $B\in\gamma\setminus\ii$ the 
restriction of a rational function $G$ to the projective tangent line $T_B\gamma$ is invariant under the $\ii$-angular 
symmetry centered at $B$; the  function $G$ is non-constant on $\cp^2$ and has poles in $\ii$. 

- For every  curve $\gamma$ generating a rationally integrable $\ii$-angular billiard all its singular and inflection 
points (if any) lie in $\ii$. 

The main algebro-geometric result of the present paper, which implies the main results, is the following theorem. 

\begin{theorem} \label{conj1}  Let $\ii\subset\cp^2$ be a conic: either regular, or a union of two distinct 
 lines. Every irreducible algebraic curve $\gamma\subset\cp^2$ different from a line and from $\ii$ and
  generating a rationally integrable 
 $\ii$-angular billiard is a conic. 
 \end{theorem}

For the proof of Theorem \ref{conj1} 
 we study  {\it local branches} of the curve $\gamma$ at points  $C\in\gamma\cap\ii$: the  irreducible 
components  of the germ $(\gamma,C)$. Each local branch is 
 holomorphically bijectively parametrized in so-called adapted affine 
coordinates by small complex parameter $t$ as follows: 
$$t\mapsto(t^{q},ct^{p}(1+o(1))), \ \text { as } t\to0; \  \ q,p\in\nn,  \ \ 1\leq q<p, \ \ c\neq0.$$
In Section 4 we prove Theorem \ref{iii} giving a list of statements  on $p$ and $q$ 
satisfied by  local branches of appropriate type (see Cases 1) and 2) below). 
Afterwards in Section 5 
we prove the following general algebro-geometric theorem. It states that Bialy--Mironov  inclusions 
$Sing(\gamma), Infl(\gamma)\subset\ii$  and the  
statements of Theorem \ref{iii} on local branches together imply that $\gamma$ is a conic. 

 \begin{theorem} \label{thalg2} Let $\ii\subset\cp^2$ be a conic: either regular, or a union of two distinct 
  lines.  Let $\gamma\subset\cp^2$ be an irreducible complex algebraic curve different from a line and from 
  $\ii$.  Let all the singularities and inflection points (if any) of the curve 
 $\gamma$ lie in  $\ii$. Let for every $C\in\gamma\cap\ii$ the 
 local branches $b$ of the curve $\gamma$ at $C$ satisfy the following statements:
 
Case 1): $C$ is a regular point of the conic $\ii$.  If $b$ is tangent to $\ii$, then it is quadratic: $p=2q$.  
If $b$ is transversal to $\ii$, then it  is regular and quadratic: $q=1$, $p=2$. 
 
Case 2): $\ii$ is a union of two distinct lines intersecting at $C$. If $b$ is  transversal to both 
lines, then $b$ is subquadratic: $p\leq2q$. 

Then $\gamma$ is a conic. 
 \end{theorem}
 
 The proof of Theorem \ref{thalg2}, which will be  given in Section 5, 
 is based on the ideas and  arguments due to E.Shustin 
 on plane curve invariants from the proof of its analogue for the outer billiards case \cite[subsections 4.1, 4.2]{gs}. 
 
 The most technical part of the paper is the proof of statement (ii-b) of Theorem \ref{iii}, which asserts that 
 each local branch of the curve $\gamma$ that is transversal to $\ii$ and is based at a regular point of the conic 
 $\ii$ is regular and quadratic. Its proof is based on a remarkable formula of Bialy and Mironov for the Hessian of the 
 function defining $\gamma$, see \cite[theorem 6.1]{bm} 
and \cite[formulas (16) and (32)]{bm2}. This formula is recalled in Section 3 as formula (\ref{hesf}). We use  
 asymptotic formulas for both sides of Bialy--Mironov formula along the transversal local branches that are stated and proved 
 in Subsection 3.4. In their proofs we use asymptotic formulas for the defining functions 
 and their Hessians stated and proved in Subsections 3.2 and 3.3 
 respectively.

In Section 6 we prove the main results: Theorems \ref{conj1}, \ref{thal} and \ref{thal0} and the complexification of Theorem \ref{thal} stated in the next subsection. 

 \def\mca{\mathcal A} 
 
 \subsection{Complexification} 
 \def\la{\lambda}
 
 Here we state a complexification of Theorem \ref{thal}, which deals with the space 
 $\cc^3_{(x_1,x_2,x_3)}$ 
 equipped with a quadratic form $<Ax,x>$, $x=(x_1,x_2,x_3)$, 
 $A\in\{\diag(1,1,0), \ \diag(1,1,\pm1)\}$, and a complex surface $\Sigma\subset\cc^3$. 
 
 -  Euclidean case: $\Sigma=\{ x_3=1\}$,  $A=\diag(1,1,0)$.
 
 - Non-Euclidean case: $\Sigma=\Sigma_{\pm}=\{<Ax,x>=\pm1\}$, 
 $A=\diag(1,1,\pm1)$.
 
 We equip the surface $\Sigma$ under question  with the complex bilinear quadratic form induced by the form $<Ax,x>$ on its tangent planes. 
 
 Note that the surfaces $\Sigma_{\pm}$  are regular, connected and obtained one from the other by the transformation 
 $(x_1,x_2,x_3)\mapsto(ix_1,ix_2,x_3)$, but the latter transformation changes the sign of the  
 quadratic form $<Ax,x>$ on $T\Sigma_{\pm}$. 
 
 Recall that a one-dimensional subspace  $\Lambda$ in a complex linear space equipped 
 with a $\cc$-bilinear scalar product is {\it isotropic,} if each vector in $\Lambda$ has 
 zero scalar square. A holomorphic curve $\Lambda$ in a complex manifold $\Sigma$ 
 equipped with a $\cc$-bilinear quadratic form on $T\Sigma$ 
 is {\it isotropic,} if for every $x\in\Lambda$ the  tangent subspace 
 $T_x\Lambda\subset T_x\Sigma$ is isotropic. 
 
    A {\it complex  geodesic} is 
 
 - a non-isotropic line in $\Sigma=\cc^2$ in the Euclidean case;
 
 -  the  intersection of the surface 
 $\Sigma$ with 
 a two-dimensional subspace in $\cc^3$ that is not tangent to the light cone $\widehat\ii=\{<Ax,x>=0\}$ in the non-Euclidean case.  
 
 The reason to cross out the planes tangent to $\widehat\ii$ is the following.
 
 \begin{proposition} \label{propiso}  Consider the non-Euclidean case: $A=\diag(1,1,\pm1)$. For every two-dimensional vector subspace $H\subset\cc^3$ 
 tangent to the light cone $\widehat\ii$ the intersection $H\cap\Sigma$ is a union of two parallel isotropic straight lines.  
 Each isotropic holomorphic curve in $\Sigma$ is a line contained in a two-dimensional vector subspace in $\cc^3$ 
 tangent to $\widehat \ii$. For every $r\in\Sigma$ the one-dimensional isotropic vector subspaces in the plane $T_r\Sigma$ are 
 exactly its intersections with two-dimensional vector subspaces in $\cc^3$  containing $r$ and tangent to $\widehat \ii$: there are exactly two of them. 
 \end{proposition}
 \begin{proof} For every $r\in\Sigma$ the quadratic form on $T_r\Sigma$ induced by 
  $<Ax,x>$ is non-degenerate, 
 since $T_r\Sigma$ is $<Ax,x>$-orthogonal  to the radius-vector of the point $r$ and transversal to it: $<Ar,r>=\pm1\neq0$. 
For every two-dimensional  subspace $H$ tangent to $\widehat\ii$ the restriction of the form $<Ax,x>$ to 
 $H$ is non-zero and has a non-zero kernel $K$: the tangency line of the plane $H$ with $\widehat\ii$. Hence, in 
 appropriate affine coordinates $(z_1,z_2)$ on $H$ centered at $0$ one has $<Ax,x>|_H=z_1^2$, $K=\{ z_1=0\}$, 
 $H\cap\Sigma=\{ z_1^2=\pm1\}$. 
 Therefore, the intersection $H\cap\Sigma$ is a union of two lines parallel to $K$, which are thus isotropic. The first 
 statement of the proposition is proved. 
 
 Let us now prove the third and the second statements of the proposition. For every point $r\in\Sigma$ the tangent plane 
 $T_r\Sigma$ equipped with the quadratic form induced by $<Ax,x>$ contains two distinct one-dimensional isotropic vector subspaces, by non-degeneracy. 
 Each ot them is the line of intersection of the plane $T_r\Sigma$ with a two-dimensional 
 subspace $H$ through $r$ that is tangent to $\widehat\ii$.  This follows from the first statement of the proposition 
 and the fact that there are two distinct 2-dimensional subspaces through $r$ that are tangent to $\widehat\ii$. 
 This implies the third statement of the proposition. This also implies that 
 each isotropic  curve in $\Sigma$ is locally a phase curve of 
 a (double-valued) holomorphic line field defined by the above intersections.  The only phase curves of the latter field are the isotropic lines in $H\cap\Sigma$, $H$ being tangent to $\widehat\ii$, 
 by the first statement of the proposition and uniqueness theorem in ordinary differential equations. 
 This proves the proposition.
 \end{proof}
  
  \begin{definition} Consider the surface $\Sigma$ in the non-Euclidean case. Let $\gamma\subset\Sigma$ 
  be a complex geodesic. Let $\mcg_{\gamma}$ denote the stabilizer of the geodesic $\gamma$ in the  
  group of automorphisms $\cc^3\to\cc^3$ preserving the form $<Ax,x>$. Its identity component, which will be 
  denoted by $\mcg_{\gamma}^0$, will be called the {\it group 
  of translations along the geodesic $\gamma$}. A translation of the complex Euclidean plane along 
  a complex line $L$ is the translation by a vector parallel to $L$. 
  \end{definition}
  \begin{remark} In the above definition in the non-Euclidean case 
  let $H\subset\cc^3$ denote the corresponding two-dimensional vector 
  subspace: $\gamma=H\cap\Sigma$.  The geodesic $\gamma$ is thus a regular conic in the plane $H$ that is 
  biholomorphically parametrized by $\cc^*$. Its projective closure $\hat\gamma$ in 
  $\cp^2\supset H$ intersects the infinity line $\cp^2\setminus H$ at two distinct points. 
  The restrictions to $\gamma$ of the translations along the geodesic $\gamma$ 
  are exactly those conformal automorphisms $\hat\gamma\to\hat\gamma$ that fix 
  the latter intersection points. One has $\hat\gamma\simeq\oc$, $\gamma\simeq\cc^*$. 
  This yields to a natural isomorphism $\mcg_{\gamma}^0\simeq\cc^*$.
  \end{remark}
 
  \begin{definition}  A {\it complex billiard} on $\Sigma$ 
  is a collection (finite or infinite, countable or uncountable) of holomorphic curves 
  $\Gamma_{t}\subset\Sigma$ distinct from isotropic lines 
  (see  \cite[definition 1.3]{alg} for finite collections in the Euclidean case). 
  A complex billiard is said to be {\it polynomially integrable,} if there exists a function $\Phi(r,P)$ on $T\Sigma$ (called a 
  {\it polynomial integral}) that is polynomial in $P\in T_r\Sigma$ with the following 
 properties:
 
 - $\Phi|_{\{<AP,P>=1\}}\not\equiv const$;
 
 - the restriction of the function $\Phi$ to the tangent bundle of every complex geodesic
 is invariant under the translations along the geodesic; 
 
  - for every point $r\in\Gamma_{t}$ such that the line $T_r\Gamma_{t}$ is non-isotropic for the quadratic form on $T_r\Sigma$ induced by $<Ax,x>$ 
  the restriction  $\Phi|_{T_r\Sigma}$ 
  is invariant under the {\it symmetry} with respect to  the complex line $T_r\Gamma_{t}$ 
  (see \cite[definition 2.1]{alg}): the unique non-trivial $\cc$-linear involution $T_r\Sigma\to T_r\Sigma$ that preserves the   form  $<Ax,x>$ on $T_r\Sigma$ and fixes the points of the 
  line $T_r\Gamma_t$.  
 \end{definition}
 
 \begin{example} Consider a polynomially integrable  billiard with countably piecewise smooth boundary in a real surface of 
 constant curvature.  
 Then the smooth part of the boundary is contained in a union of arcs of conics and segments of admissible geodesics 
 (Theorem \ref{thal}). 
 Their complexifications form a complex billiard having a polynomial integral that is the complexification of the real polynomial integral 
 of the real billiard: it can be chosen of degree no greater than four, see Proposition \ref{propconf}. 
 \end{example}
 
 \begin{remark}
 The confocality notion  from Definition \ref{pencil} for real conics extends to the case of complex conics in $\Sigma$ 
 without changes  in both non-Euclidean and Euclidean cases 
 with $B$ being a complex symmetric matrix 
and $\la\in\cc$. In the Euclidean case this complex confocality notion is equivalent to 
the one  
 given in  \cite[definition 2.24]{alg}, which follows from definition and 
Remark \ref{rpencil}. 
\end{remark}

\begin{remark} A pencil of confocal conics given by a matrix $B$ is well-defined, if and only if inequality (\ref{detl}) holds: 
$\det(B-\la A)\not\equiv0$ as a function of $\la$. In the real case inequality (\ref{detl}) is equivalent 
to the condition that the $x_3$-axis is not contained in the kernel of the matrix $B$, i.e., 
$(B_{13}, B_{23},B_{33})\neq(0,0,0)$, see Remark \ref{detneq}. In the complex case inequality (\ref{detl}) is equivalent 
to the following weaker condition: for every choice of sign $\pm$ the equalities
$$B_{33}=0, \ B_{23}=\pm iB_{13}, \ B_{13}^2(B_{11}-B_{22}\pm2iB_{12})=0$$
do not hold simultaneously. 
\end{remark}

 \begin{definition} A complex billiard $\Gamma_{t}$ is said to be {\it confocal,} if the set of its curves  different from complex geodesics 
 is non-empty,  all of them are confocal 
 complex conics, and the complex geodesics from the family $\Gamma_{t}$ are admissible with respect to the 
 corresponding confocal conic pencil in the sense of  Definition \ref{adm}, where now everything is complex: $B$, $\la$, $a$, $b$,... 
  \end{definition}
  \begin{remark} A priori, some complex curves $\Gamma_{\lambda}$ in (\ref{geodconf}), 
  $\Gamma_{\lambda}(H)$ and some subsets in (\ref{newgeod}) 
  may be isotropic lines; then they are not complex geodesics, and we do not call them 
  admissible. For example, in the non-Euclidean case let $\la\in\cc$ be such that the kernel $K_{\la}=\ker(B-\la A)$ 
  is one-dimensional. The corresponding intersection $\Gamma_{\lambda}=K_\la^{\perp}\cap\Sigma$ 
  is isotropic, if and only if $K_{\lambda}\subset\widehat\ii$. This 
  follows from Proposition  \ref{propiso} and since  the Euclidean orthogonal $K_{\lambda}^{\perp}$ is tangent to 
 the light cone $\widehat\ii$ if and only if $K_\la\subset\widehat\ii$: see the last statement of Corollary \ref{self} in Subsection 2.2.
  \end{remark}
 \begin{theorem} \label{talc} Every polynomially integrable complex billiard $\Gamma_{t}$ on $\Sigma$ 
 containing at least one non-geodesic curve is 
 confocal and has an integral $\Phi(r,P)=\Psi(M)$ (where $M=[r,P]$ is the complexified Euclidean vector product) 
 that is a homogeneous polynomial in $M$ of degree at most four. The integral can be chosen 
  quadratic in  $M$, except for the cases 2a), 2b) in Definition \ref{adm}, when $\Gamma_{t}$ contains a corresponding 
  admissible complex geodesic of type (\ref{newgeod}): 
   in this case there is an integral of degree four. 
  \end{theorem}
  
   Theorem \ref{talc} will be proved in Subsection 6.4.

 \subsection{Historical remarks}
 Existence  of caustics in any strictly convex planar billiard with sufficiently 
 smooth boundary was proved by V.F.Lazutkin \cite{laz}. Non-existence of caustics 
 in higher-dimensional billiards with boundaries different from quadrics was proved 
 by M.Berger \cite{berger}. 
  
 The Birkhoff Conjecture was studied by many mathematicians. In 1950 H.Poritsky \cite{poritsky} 
 proved it under the additional assumption that 
 the billiard in each closed caustic near the boundary has the same closed caustics, as the initial billiard. Later in 1988 another proof 
 of the same result was obtained by E.Amiran \cite{amiran}.  Recall that the reflection from the boundary of a 
 convex planar billiard $\Omega$ acts on the space of oriented lines intersecting $\Omega$, and their space is called the 
 phase cylinder: each line is reflected from the boundary $\partial\Omega$ at its last 
 point of intersection with $\partial\Omega$ (with respect to its orientation), and its reflected image is directed inside 
 the domain $\Omega$ at this point. In 1993 M.Bialy \cite{bialy} proved that if the phase cylinder of the billiard is 
 foliated  by non-contractible continuous closed curves which are invariant under the billiard map,  then the boundary 
 $\partial\Omega$ is a circle. (Another proof of the same result was later obtained in \cite{wojt}.) 
  In particular, Bialy's result implies 
 Birkhoff Conjecture under the assumption that the foliation by caustics extends to the whole billiard domain punctured at 
 one point:  then the  boundary is a circle. In 2012 he proved a similar result  for billiards on the constant curvature surfaces \cite{bialy1} and also for magnetic billiards 
 \cite{bialy2}. In 1995 A.Delshams and R.Ramirez-Ros suggested an 
 approach to prove splitting of separatrices for generic perturbation of ellipse \cite{dr}.
 In 2013 D.V.Treschev \cite {treshchev} made a numerical experience indicating that 
 there should exist analytic {\it locally integrable} billiards, with the billiard reflection map 
 having a  two-periodic point where the  germ of its second iterate is analytically conjugated to a disk rotation. Recently Treschev studied 
 the billiards from \cite{treshchev} in more detail  in \cite{tres2} and their multi-dimensional versions in \cite{tres3}. A similar effect for a ball rolling on a vertical 
 cylinder under the gravitation force was discovered in \cite{tad}: the authors have shown 
 that the ratio between its vertical and horizontal oscillation periods is a universal irrational constant, the number $\sqrt{7/2}$. 
  Recently V.Kaloshin and A.Sorrentino have proved a {\it local version} of the Birkhoff Conjecture \cite{kalsor}: 
 {\it an integrable deformation of 
 an ellipse is an ellipse}. (The case of ellipses with small extentricities was treated in the previous paper by A.Avila,  J. De Simoi 
 and V.Kaloshin 
 \cite{kavila}.)  A dynamical entropic version of Birkhoff Conjecture was stated and 
 partially studied by J.-P.Marco in \cite{marco}. 
 
 In 1988 A.P.Veselov proved that every billiard bounded by confocal quadrics in any dimension has a complete system of first 
 integrals in involution that are quadratic in $P$ \cite[proposition 4]{veselov}. In 1990 he studied a 
 billiard in a non-Euclidean 
 ellipsoid:  in the sphere and in the Lobachevsky (i.e., hyperbolic) space of any dimension  $n$.  He proved its complete integrability and 
  provided an explicit complete list of  first integrals \cite[the corollary on p. 95]{veselov2}. 
  In the same paper he proved that all the sides 
  of a billiard  trajectory are tangent to the same $n-1$ quadrics confocal to the boundary of the ellipsoid and the billiard dynamics 
  corresponds to a shift of the Jacobi variety corresponding to an appropriate hyperelliptic curve \cite[theorems 3, 2 on  p. 99]{veselov2}. 
 The Polynomial Birkhoff Conjecture together with its generalization to surfaces of constant curvature was  
 stated and studied by S.V.Bolotin, who proved in 1990 
 that in its conditions the billiard boundary lies in an algebraic 
 curve \cite{bolotin}. In the same paper and in \cite[section 4]{bolotin2} he proved the conjecture under the assumption that at least one irreducible component of the corresponding complex projective planar algebraic curve 
 is non-linear and nonsingular (in the non-Euclidean case it is also required that in addition, at least one intersection point of 
 the  latter component with the absolute be transversal). 
In \cite{bolotin2} Bolotin proved integrability of countably confocal billiards with piecewise smooth 
boundaries  with integrals of degrees two or four and a similar statement in higher-dimensional spaces of constant curvature. 
 M.Bialy and A.E.Mironov proved  the planar Polynomial Birkhoff Conjecture in the case of integrals 
of degree four \cite{bm3}. A version of the planar Polynomial Birkhoff Conjecture for {\it  families} of billiards sharing  the same polynomial 
integral (with boundaries depending continuously on one parameter) was solved in \cite{abdr}: 
in loc. cit. it is sufficient to require that the union of the boundaries do not lie in an algebraic curve in $\rr^2$, 
see \cite[end of p.30]{abdr}. Dynamics in countably confocal billiards with piecewise smooth boundaries in two and higher 
dimensions was studied in \cite{drag, dr2, dr3, dr4, dr5}. Dynamics 
in the so-called pseudo-integrable billiards (more precisely, confocal billiards with non-convex angles) was studied in \cite{dr2, dr3, dr4, dr5}. 
For further results on the Polynomial Birkhoff  Conjecture and its version for magnetic billiards see the above-mentioned papers \cite{bm, bm2, bm3} 
by M.Bialy and A.E.Mironov, \cite{bm4, bm5} and references therein. 

The analogue of the  Birkhoff Conjecture for outer billiards was stated  by S.L.Tabachnikov 
\cite{tab08} in 2008. Its polynomial version  was stated by Tabachnikov and proved by himself under genericity assumptions in the same paper, and recently  solved completely in the joint work of the author of the present paper with E.I.Shustin \cite{gs}.

\section{Preliminaries: from polynomially integrable to $\ii$-angular billiards}

\subsection{Reflection  and $\ii$-angular symmetry}

Here we  present results of M.Bialy and A.E.Mironov mentioned in Subsection 1.2 and 
 give self-contained proofs of some of them.

\begin{proposition} \label{vectis} (S.V.Bolotin, see \cite[formula (15), p.23]{bolotin2}, \cite[formula (3.12), p.140]{kozlov}). 
 For every $r\in\Sigma$ the linear operator $\mcm_r:T_r\Sigma\to V_r=r^{\perp}$, $v\mapsto[r,v]$ is an isomorphism 
 preserving the quadratic form $<Ax,x>$. Here the orthogonal complement and the vector product are taken with 
 respect to  the standard Euclidean scalar product, see Footnote 3.
 \end{proposition}

\begin{definition} Let the space $\rr^n$ be equipped with  a quadratic form $<Ax,x>$, 
 $A$ being a symmetric $n\times n$-matrix, and let $\ell\subset\rr^n$ be a one-dimensional vector subspace 
 such that $\ell\not\subset\{<Ax,x>=0\}$. The {\it pseudo-symmetry} of the space $\rr^n$ with 
 respect to the line $\ell$  is the linear involution $\rr^n\to\rr^n$ preserving 
 the quadratic form, fixing the points of the line $\ell$ and acting as central symmetry in its orthogonal complement  with respect to the form. The definition of complex pseudo-symmetry of the space $\cc^n$ equipped with a $\cc$-bilinear quadratic form 
  is analogous. 
 \end{definition}
  \begin{corollary} \label{cve} For every $r\in\Sigma$ and one-dimensional subspace $\ell\subset T_r\Sigma$ 
 the   mapping $\mcm_r:T_r\Sigma\to V_r$, $v\mapsto M$ conjugates  the pseudo-symmetry 
 $T_r\Sigma\to T_r\Sigma$ with respect to the 
 line $\ell$ and the pseudo-symmetry  $V_r\to V_r$ with respect to the one-dimensional subspace 
  orthogonal to both $r$ and $\ell$.
 \end{corollary}
  
 \begin{definition} Let $\ii\subset\cp^2$ be a conic: either a smooth conic, or a union of two distinct lines. Let  
$B\in\cp^2\setminus\ii$. For every complex line $L$ through $B$ consider its complex projective involution fixing $B$ and permuting 
its intersection points with $\ii$. (If $L$ is tangent to $\ii$, the involution under question is the unique 
non-trivial projective involution $L\to L$ fixing $B$ and the tangency point.) The transformation thus constructed for each 
$L$ is a projective involution $\cp^2\to\cp^2$ fixing $B$, which will be called the {\it $\ii$-angular symmetry} with center $B$. 
See Fig. 2 in the Euclidean case.
\end{definition} 

 \begin{proposition} \label{projps} Consider the space $\cc^3_{(M_1,M_2,M_3)}$ equipped with  a quadratic form $<AM,M>$, 
 $dim(\ker A)\leq1$. The absolute  
 $\ii=\{<AM,M>=0\}\subset\cp^2_{(M_1:M_2:M_3)}$, see (\ref{absol}), is either a regular conic, or a union of two 
 distinct lines. 
  The projectivization of a  pseudo-symmetry  $\cc^3\to\cc^3$ with respect to a one-dimensional 
 subspace $\ell$ is the $\ii$-angular symmetry with center $\pi(\ell)$. 
 \end{proposition}
 The proposition follows from definition.
 \begin{figure}[ht]
  \begin{center}
   \epsfig{file=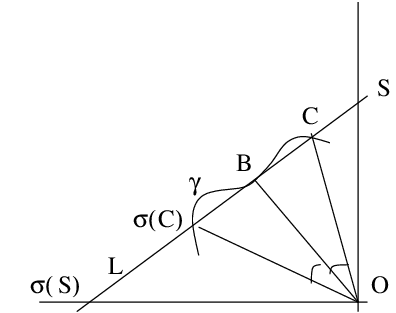}
    \caption{The $\ii$-angular symmetry $\sigma:\cp^2\to\cp^2$ with center $B$ in the Euclidean case, when 
    $\ii=\{ x_1^2+x_2^2=0\}$: the action in the affine chart 
    $\cc^2_{(x_1,x_2)}$; $O=(0,0)$. The lines $OC$ and $O\sigma(C)$ are symmetric with respect to the line $OB$. The projective lines $OS$ and $O\sigma(S)$ are isotropic for the complex Euclidean metric $dx_1^2+dx_2^2$ 
    on $\cc^2$, that is,  $\ii=OS\cup O\sigma(S)$.}
    \label{fig:2}
  \end{center}
\end{figure}

 \begin{theorem} \label{psimm}  (see \cite[theorem 1.3, p.151]{bm2} in the non-Euclidean case). 
 Let $\Omega\subset\Sigma$ be a polynomially integrable 
billiard with countably piecewise smooth boundary and a homogeneous polynomial integral 
$\Psi(M)$ of even degree. Let $r$ be a point in a smooth arc in $\partial\Omega$. Set $V_r=r^{\perp}\subset\rr^3$. Let   $L\subset V_r$ be the one-dimensional subspace Euclidean-orthogonal to  both $r$ and the tangent line $T_r\partial\Omega$. The restriction 
$\Psi|_{V_r}$ is invariant under the pseudo-symmetry of the plane $V_r$ equipped with the form 
$<Ax,x>$ with respect to the line $L$.
\end{theorem}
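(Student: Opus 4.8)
The plan is to obtain the claimed invariance by reading off the billiard reflection law at $r$ and transporting it from the tangent plane $T_r\Sigma$ to the plane $V_r$ by means of the form-preserving isomorphism $\mcm_r$ of Proposition \ref{vectis}.

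I would start from the reflection law. Let $r$ belong to a smooth arc of $\partial\Omega$, and let a billiard trajectory arrive at $r$ with speed vector $v^-\in T_r\Sigma$ and leave with speed vector $v^+\in T_r\Sigma$. The reflection law states that $v^+$ is obtained from $v^-$ by the linear involution of $T_r\Sigma$ that fixes the tangent line $T_r\partial\Omega$ pointwise and reverses the $<A\cdot,\cdot>$-orthogonal direction to it inside $T_r\Sigma$; since the induced form $<A\cdot,\cdot>|_{T_r\Sigma}$ is the (positive definite) Riemannian metric, this involution is precisely the pseudo-symmetry of $T_r\Sigma$, equipped with $<Ax,x>$, with respect to the line $T_r\partial\Omega$. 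Now $\Psi(M)$ is a first integral of the billiard flow, well-defined on all of $T\Sigma$ (Theorem \ref{algnew}), and at the reflection instant the base point $r$ is unchanged while the moment vector jumps from $M^-=[r,v^-]$ to $M^+=[r,v^+]$; hence $\Psi(M^-)=\Psi(M^+)$ for every admissible incoming direction $v^-$, and such directions fill a non-empty open subset of $T_r\Sigma$.

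Next I would transport this to $V_r$. By Proposition \ref{vectis} the map $\mcm_r:T_r\Sigma\to V_r$, $v\mapsto[r,v]$, is a linear isomorphism preserving $<Ax,x>$, hence, by polarization, also the associated bilinear form. Therefore $\mcm_r$ conjugates the pseudo-symmetry of $T_r\Sigma$ with respect to $T_r\partial\Omega$ into the pseudo-symmetry of $V_r$ with respect to the line $\mcm_r(T_r\partial\Omega)$, and from $M^{\pm}=\mcm_r(v^{\pm})$ we conclude that $M^+$ is the image of $M^-$ under the latter pseudo-symmetry. It remains to identify $\mcm_r(T_r\partial\Omega)$ with $L$: choosing $t$ spanning $T_r\partial\Omega$, the vector $\mcm_r(t)=[r,t]$ is the vector product of $r$ and $t$, hence Euclidean-orthogonal to both $r$ and $t$; thus $\mcm_r(T_r\partial\Omega)=\rr\,[r,t]$ is exactly the line inside $V_r=r^{\perp}$ that is orthogonal to $T_r\partial\Omega$, which is $L$ by definition.

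Finally I would promote the pointwise identity to the statement of the theorem. For fixed $r$ the pseudo-symmetry of $V_r$ with respect to $L$ is a linear involution of the two-dimensional space $V_r$, so both $\Psi|_{V_r}$ and its composition with this involution are polynomial functions on $V_r$; having just shown that they agree on a non-empty open subset of $V_r$ (the $\mcm_r$-image of the admissible incoming directions), we conclude that they agree everywhere on $V_r$, which is exactly the asserted invariance of $\Psi|_{V_r}$. I do not expect a genuine obstacle here: the whole argument is a reformulation of the reflection law, and its only delicate point — that the $A$-orthogonal reflection on $T_r\Sigma$ becomes an $A$-orthogonal reflection on $V_r$ — is settled once and for all by the form-preservation in Proposition \ref{vectis}.
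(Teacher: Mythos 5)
Your proposal is correct and follows essentially the same route as the paper's proof: invariance of $\Psi([r,v])$ under the reflection pseudo-symmetry of $T_r\Sigma$ with respect to $T_r\partial\Omega$, transported to $V_r$ via the form-preserving isomorphism $\mcm_r$ of Proposition \ref{vectis}, which sends $T_r\partial\Omega$ to $L$. The only difference is that you spell out the density/polynomiality step needed to pass from the open set of admissible incoming directions to all of $V_r$, which the paper leaves implicit.
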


\begin{proof} The polynomial integral $\Psi([r,v])$ is invariant under the action on $v$ of 
the  pseudo-symmetry 
$T_r\Sigma\to T_r\Sigma$ with respect to the line $\ell=T_r\partial\Omega$ (invariance under reflection). 
This together with Corollary \ref{cve} implies the statement of the theorem.
\end{proof}

\begin{convention} \label{conven} Recall that for every $C^2$-smooth curve $\alpha\subset\Sigma$ with non-zero 
geodesic curvature its {\it $\Sigma$-dual}  is the 
curve $\alpha^*\subset\rp^2$ orthogonal-polar-dual to the projection 
$\pi(\alpha)\subset\rp^2$, see Subsection 1.2. 
For every $r\in\Sigma$ each one-dimensional vector subspace $\ell\subset T_r\Sigma$ is the intersection of the tangent 
plane $T_r\Sigma$ with a two-dimensional subspace $H\subset\rr^3$ containing $r$. The intersection 
$\widehat\ell=H\cap\Sigma$ 
is the geodesic tangent to $\ell$. The point  $\pi(H^\perp\setminus\{0\})\in\rp^2$ will be called the {\it point $\Sigma$-dual} to 
the subspace $\ell$ and to the geodesic $\widehat\ell$. It will be denoted by $\widehat\ell^*$. 
\end{convention}
 \begin{theorem} \label{ext}   Let $\Omega\subset\Sigma$ be a polynomially integrable billiard with a countably piecewise 
 $C^2$-smooth  boundary. Let $\Psi(M)$ be its    homogeneous polynomial integral of even degree $2n$. The function $G=\frac{\Psi(M)}{<AM,M>^n}$  from (\ref{ratg}) treated as a rational function on 
 $\cp^2_{(M_1:M_2:M_3)}$ satisfies the following 
 statements.
 
 1)  For every $C^2$-smooth arc $\alpha\subset\partial\Omega$ with non-zero 
geodesic curvature, let $\alpha^*\subset\rp^2$ be its 
$\Sigma$-dual curve, for every point $C\in\alpha^*$  the restriction of the function $G$ to the  projective line $T_C\alpha^*$ is invariant under the 
$\ii$-angular symmetry with center $C$. One has $G|_{\alpha^*}\equiv const$. 
 
 2) For every geodesic $\widehat \ell\subset\Sigma$ that contains a segment of the boundary $\partial\Omega$
   the function $G$ is invariant under the 
 $\ii$-angular symmetry of the whole projective plane $\cp^2$ with center $\widehat\ell^*$: the point $\Sigma$-dual to $\widehat\ell$. 
 \end{theorem}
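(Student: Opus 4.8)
The plan is to derive both statements from Theorem \ref{psimm}, which already encodes the reflection invariance of $\Psi$ as invariance of $\Psi|_{V_r}$ under a pseudo-symmetry of the plane $V_r$. The rational function $G$ of (\ref{ratg}) will then inherit the invariance automatically: its denominator $\langle AM,M\rangle^n$ is preserved by \emph{every} pseudo-symmetry, since a pseudo-symmetry preserves the quadratic form $\langle Ax,x\rangle$ by definition. So the real content is to translate the data of Theorem \ref{psimm} through the $\Sigma$-duality of Definition \ref{defd}: to recognize the plane $V_r$ as a tangent line of the dual curve $\alpha^*$, and the line $L$ of Theorem \ref{psimm} as the cone over the center of the corresponding $\ii$-angular symmetry.

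For statement 1), fix a non-linear smooth arc $\alpha\subset\partial\Omega$ and a point $r\in\alpha$ at which $\pi(\alpha)$ has an ordinary tangent. Let $\hat l$ be the geodesic tangent to $T_r\alpha$ at $r$; it lies in the plane $H_r=\mathrm{span}(r,T_r\alpha)\subset\rr^3$, and $\pi(\hat l)\subset\pi(H_r)$ is tangent to $\pi(\alpha)$ at $\pi(r)$, so $\pi(H_r)$ is the tangent line of $\pi(\alpha)$ at $\pi(r)$ and its dual point $C:=(T_r\alpha)^*=\pi(H_r^{\perp})$ is the corresponding point of $\alpha^*$. Here $L:=L(r):=H_r^{\perp}$ is exactly the line attached to $r$ in Theorem \ref{psimm} (orthogonal to $r$ and to $T_r\partial\Omega=T_r\alpha$), so $C=\pi(L)$. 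By reflexivity (biduality) of plane projective curves, the tangent line to $\alpha^*$ at $C$ is dual to the point $\pi(r)$, i.e.\ $T_C\alpha^*=\pi(V_r)$ with $V_r=r^{\perp}$. Now Proposition \ref{vectis} shows $\langle Ax,x\rangle$ is positive definite on $V_r$ (it is the pullback of the Riemannian metric on $T_r\Sigma$ under $\mcm_r$), so $L\subset V_r$ is non-isotropic, and a short linear-algebra check identifies the pseudo-symmetry of the plane $V_r$ with respect to $L$ from Theorem \ref{psimm} with the restriction to $V_r$ of the pseudo-symmetry $s_L$ of $\rr^3$ with respect to $L$ (both fix $L$ and act by $-1$ on the $A$-orthogonal complement of $L$ in $V_r$, and $s_L$ preserves $V_r$ because $V_r\supset L$). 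Theorem \ref{psimm} gives $\Psi|_{V_r}\circ s_L=\Psi|_{V_r}$; combined with $s_L$-invariance of $\langle AM,M\rangle$ on $\cc^3$, this yields $G\circ s_L=G$ on $V_r$, hence $G$ restricted to $\pi(V_r)=T_C\alpha^*$ is invariant under the induced projective involution $\mathbb P(s_L)$, which is precisely the $\ii$-angular symmetry with center $\pi(L)=C$.

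To finish statement 1), observe that non-isotropy of $L$ gives $C\notin\ii$, so $G$ is regular at $C$; the involution $\sigma_C$ fixes $C$ and acts nontrivially on the generic line through $C$, hence has derivative $-1$ there, so the $\sigma_C$-invariant function $G|_{T_C\alpha^*}$ has a critical point at $C$, and since $T_C\alpha^*$ is tangent to $\alpha^*$ at $C$ the restriction $G|_{\alpha^*}$ has vanishing differential at $C$; this holds on a dense subset of $\alpha^*$, so $G|_{\alpha^*}\equiv const$. For statement 2), let $\hat l\subset\Sigma$ be a geodesic containing a boundary segment $\gamma$ and spanning the plane $H\subset\rr^3$, so $\hat l^*=\pi(H^{\perp})$; for every $r\in\gamma$ one has $T_r\partial\Omega=T_r\hat l\subset H$, hence $H_r=H$, and the line $L:=H^{\perp}$ of Theorem \ref{psimm} is now independent of $r$. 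Then $\Psi\circ s_L-\Psi$ is a polynomial vanishing on every plane $V_r=r^{\perp}$, $r\in\gamma$; these planes are cut out by the pairwise non-proportional linear forms $\langle M,r\rangle$ (distinct points of $\Sigma$ are non-proportional vectors of $\rr^3$), so $\Psi\circ s_L-\Psi$ is divisible by infinitely many pairwise non-proportional linear forms and hence vanishes identically on $\cc^3$. With the $s_L$-invariance of $\langle AM,M\rangle$ this gives $G\circ s_L=G$ on $\cc^3$, i.e.\ $G$ is invariant under $\mathbb P(s_L)$, the $\ii$-angular symmetry with center $\pi(L)=\hat l^*$.

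The step I expect to be most delicate is the dictionary in the second paragraph: tracking through Definition \ref{defd} and biduality that $C=(T_r\alpha)^*$ equals $\pi(L(r))$ and that $T_C\alpha^*=\pi(V_r)$, and verifying that the pseudo-symmetry of the plane $V_r$ appearing in Theorem \ref{psimm} is the restriction of the ambient pseudo-symmetry of $\rr^3$. Everything else — inheriting the invariance through the $\langle AM,M\rangle$-homogeneous denominator, the critical-point argument for $G|_{\alpha^*}\equiv const$, and the promotion of the per-plane identity in 2) to a global identity on $\cc^3$ — is routine.
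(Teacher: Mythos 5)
Your proof is correct and follows essentially the same route as the paper: both statements are read off from Theorem \ref{psimm} via the $\Sigma$-duality dictionary ($C=\pi(L)$ and $T_C\alpha^*=\pi(V_r)$ by biduality), with the denominator $\langle AM,M\rangle^n$ handled by isometry of the pseudo-symmetry and the constancy of $G|_{\alpha^*}$ by the vanishing-derivative-at-a-fixed-point argument. The only (immaterial) divergence is in statement 2, where the paper globalizes by analytic continuation over the pencil of lines through $\hat l^*$, while you instead observe that $\Psi\circ s_L-\Psi$ is divisible by infinitely many pairwise non-proportional linear forms $\langle M,r\rangle$ and hence vanishes identically; both globalizations are routine and equivalent in effect.
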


 \begin{remark}\label{rkin} A version of statement 1) of Theorem \ref{ext} in the Euclidean case was proved in  \cite[theorem 3]{bm} for convex domains with smooth boundary. 
 But its proof remains valid in the general Euclidean case. 
  \end{remark}

\begin{proof} {\bf of Theorem \ref{ext}.} Each point $C\in\alpha^*$ is dual to the projective line 
tangent to the curve $\pi(\alpha)$ at some point $\pi(r)$, $r\in\alpha$, by definition. Consider 
the projective line $T_C\alpha^*$ and set 
$V=\pi^{-1}(T_C\alpha^*)\cup\{0\}\subset\rr^3$. It is 
 the two-dimensional subspace orthogonal to the 
line $Or$, by definition. Set $L=\pi^{-1}(C)\cup\{0\}\subset V$: it is the one-dimensional 
subspace orthogonal to both lines $T_{r}\alpha$ and $Or$, by definition. 
The restrictions to $V$ of 
both functions $\Psi(M)$ and $<AM,M>$ are invariant 
under the pseudo-symmetry of the plane $V$ with respect to the line  $L$, by Theorem \ref{psimm} and isometry. 
Hence, the restriction to $V$ of the ratio $G(M)=\frac{\Psi(M)}{<AM,M>^n}$ is also invariant. Therefore, 
the  restriction to $\pi(V\setminus\{0\})=T_C\alpha^*$ of the  function $G$ treated as a rational function on $\cp^2$ 
is invariant under the  projectivized pseudo-symmetry, which coincides with the $\ii$-angular symmetry centered 
at $C$, by Proposition \ref{projps}. 
The equality $G|_{\alpha^*}\equiv const$ holds since the derivative of the function $G$ at $C$ along  
a vector tangent to $T_C\alpha^*$ vanishes. Indeed,  the 
function $G|_{T_C\alpha^*}$, which is invariant under a projective involution fixing  $C$, has zero derivative 
at  $C$, similarly to vanishing of derivative of an even function at 0. Statement 1) is proved. 
The proof of statement 2) is analogous. In more detail, let $\Lambda\subset\Sigma$ 
be a geodesic whose segment $I\subset \Lambda$ is contained in $\partial\Omega$. For every point $Q\in I$ 
the projective line $Q^*$ dual to $\pi(Q)$ passes through the point $\Lambda^*$ $\Sigma$-dual to $\Lambda$. The  
restriction $G|_{Q^*}$  is invariant under the $\ii$-angular symmetry with center $\Lambda^*$, 
as in the above argument. Therefore, this holds for 
the restriction of the function $G$ to every complex line through $\Lambda^*$, and hence, on 
all of $\cp^2$, by uniqueness of analytic extension. Statement 2) is proved. 
\end{proof}

  \begin{definition} \label{defang} Let $\ii\subset\cp^2$ be a conic: 
  either a regular conic, or a pair of distinct lines.   Let $\gamma\subset\cp^2$ be 
 an irreducible algebraic curve different from a line and from $\ii$. 
 We say that $\gamma$ {\it generates a rationally integrable $\ii$-angular billiard,} if there exists 
 a non-constant rational function $G$ on $\cp^2$ with poles contained in $\ii$ (called the {\it integral of the $\ii$-angular billiard}) 
 such that for every $C\in\gamma\setminus\ii$ the restriction of the function $G$ to the projective tangent line 
 $T_C\gamma$ is invariant under the $\ii$-angular symmetry with center $C$. 
 \end{definition}

\begin{corollary} \label{iang} Let $\ii\subset\cp^2_{(M_1:M_2:M_3)}$ be the absolute, see (\ref{absol}). Let 
$\Omega\subset\Sigma$ 
be a polynomially integrable billiard with a non-trivial homogeneous integral $\Psi(M)$ of even degree $2n$. 
Let $\alpha\subset\partial\Omega$ be a  $C^2$-smooth arc with non-zero 
geodesic curvature,  and let 
$\alpha^*\subset\rp^2\subset\cp^2$ be its $\Sigma$-dual curve. The 
complex projective Zariski closure of the curve $\alpha^*$ is an algebraic curve. 
Each its non-linear irreducible 
component   generates a  rationally integrable 
$\ii$-angular billiard with integral $G(M)=\frac{\Psi(M)}{<AM,M>^n}$, see (\ref{ratg}). 
\end{corollary}

\begin{proof} The function $G$ is non-constant on $\cp^2$, since $\Psi|_{\{<AM,M>=1\}}\not\equiv const$: 
the latter statement follows from  non-constance of the function $\Psi([r,v])$ on the hypersurface $\{<Av,v>=1\}$ 
(non-triviality of the integral) and Proposition \ref{vectis}.  The first statement of the corollary, which  follows from Bolotin's 
Theorem \ref{algnew}, also follows from constance of the function $G$ on $\alpha^*$, 
see Statement 1) of Theorem \ref{ext}.  Its second statement follows from the invariance 
of the function $G$ in Statement 1) of Theorem \ref{ext}  by straightforward 
analytic extension argument.
\end{proof}

 \begin{proposition} \label{propc} Let an irreducible algebraic curve $\gamma\subset\cp^2$ generate a rationally integrable $\ii$-angular 
 billiard with the integral $G$. Then $G|_{\gamma}\equiv const$. 
 \end{proposition}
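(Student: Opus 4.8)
The plan is to exploit that the center $C$ of an $\ii$-angular symmetry is one of its fixed points, so that a function invariant under this symmetry has vanishing derivative at $C$ along the lines through $C$. Concretely, I would first show that the differential of $G|_{\gamma}$ vanishes at every point of a dense subset of $\gamma$, and then propagate this to all of $\gamma$ by passing to its normalization, which is connected.

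Fix a smooth point $C\in\gamma\setminus\ii$ and write $\sigma_C:\cp^2\to\cp^2$ for the $\ii$-angular symmetry with center $C$, i.e. the projectivization of the pseudo-symmetry of $\cc^3$ with respect to the line $L=\pi^{-1}(C)$. Because $C\notin\ii$, the restriction of $<Ax,x>$ to $L$ is nondegenerate, so $\cc^3=L\oplus L^{\perp}$, the pseudo-symmetry is the identity on $L$ and minus the identity on $L^{\perp}$, and hence the fixed locus of $\sigma_C$ in $\cp^2$ is $\{C\}\cup C^{\vee}$, where $C^{\vee}:=\pi(L^{\perp})$ is the polar line of $C$ with respect to $\ii$. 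Since $\sigma_C$ preserves every line through $C$, it restricts to a projective involution of the line $T_C\gamma\cong\cp^1$ that fixes $C$; this restriction is the identity exactly when $T_C\gamma=C^{\vee}$.

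The crucial point is that $T_C\gamma=C^{\vee}$ cannot hold for a generic $C\in\gamma$. Indeed, parametrizing $\gamma$ locally by $C=\pi(v(t))$ one has $T_C\gamma=\pi(\operatorname{span}(v,\dot v))$, and equality of the two $2$-dimensional subspaces $\operatorname{span}(v,\dot v)$ and $L^{\perp}$ forces $v\in L^{\perp}$, i.e. $<Av(t),v(t)>\equiv0$; thus $v(t)\in\hat\ii$ for all $t$, so $\gamma\subset\ii$, whence $\gamma$ would be $\ii$ itself (if $\ii$ is a regular conic) or one of the two lines composing $\ii$, both excluded by hypothesis. Hence $\{C\in\gamma:\ T_C\gamma=C^{\vee}\}$ is a proper Zariski-closed subset; removing it together with the finite sets $\Sing\gamma$ and $\gamma\cap\ii$ yields a dense open $U\subset\gamma$. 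For $C\in U$ the involution $\sigma_C|_{T_C\gamma}$ of $\cp^1$ is nontrivial and fixes $C$, hence acts on the tangent line $T_C(T_C\gamma)$ by $-1$; differentiating at $C$ the invariance $G|_{T_C\gamma}=G|_{T_C\gamma}\circ\sigma_C$ — where $G$ is regular, since $C\notin\ii$ — gives $d(G|_{T_C\gamma})_C=0$, and because $T_C\gamma$ is tangent to $\gamma$ at $C$ this is the same as $d(G|_{\gamma})_C=0$.

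To conclude, I would take the normalization $\nu:\widetilde\gamma\to\gamma$, a connected compact Riemann surface, and set $\widetilde G=G\circ\nu$: this is a rational function on $\widetilde\gamma$, regular off the finite set $\nu^{-1}(\ii)$, whose differential is a rational $1$-form vanishing on the dense set $\nu^{-1}(U)$, hence identically zero; therefore $\widetilde G$ is constant, i.e. $G|_{\gamma}\equiv const$. I expect the only genuine obstacle to be the crucial point above, that is, correctly identifying the fixed locus of $\sigma_C$ as $\{C\}\cup C^{\vee}$ and ruling out that the tangent line of $\gamma$ coincides identically with the polar line; the remaining ingredients — the derivative of an invariant function vanishing at a fixed point of a nontrivial involution, and analytic continuation on a connected curve — are routine.
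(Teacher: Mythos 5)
Your proof is correct and is essentially the paper's own argument (the paper proves the identical statement for $\alpha^*$ inside the proof of Theorem \ref{ext} and states that Proposition \ref{propc} ``repeats literally'' that proof): the restriction $G|_{T_C\gamma}$ is invariant under a nontrivial projective involution fixing $C$, hence has vanishing derivative at $C$, and this propagates to $G|_{\gamma}\equiv const$ by analytic continuation on the (connected) normalization. The only remark is that your ``crucial point'' is vacuous: for $C\notin\ii$ the polar line $C^{\vee}=\pi(L^{\perp})$ does not contain $C$ at all (since $C\in C^{\vee}$ would mean $L\subset L^{\perp}$, i.e. $C\in\ii$), so $T_C\gamma$, being a line through $C$, can never coincide with $C^{\vee}$, and $\sigma_C|_{T_C\gamma}$ is automatically the nontrivial involution fixing $C$ and $T_C\gamma\cap C^{\vee}$.
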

 
 The proof of the proposition repeats literally the above proof of the analogous statement from 
 Theorem  \ref{ext}, part 1).

 \subsection{Duality and $\ii$-angular billiards. Proof of Theorem \ref{bol2}}
 
For the proof of Theorem \ref{bol2}  
we use the well-known classical properties of the  orthogonal polarity given by the following proposition and its corollary. 
We present the proof of the proposition for completeness of presentation. 

\begin{proposition} \label{classb} Let $B$ be a non-degenerate complex symmetric $3\times 3$-matrix. Consider the complex 
space $\cc^3$ with coordinates $x=(x_1,x_2,x_3)$ 
equipped with the complex-bilinear Euclidean quadratic form $dx_1^2+dx_2^2+dx_3^2$.  
The complex orthogonal-polar-dual to 
the conic in $\cp^{2}_{(x_1:x_2:x_3)}$ given by the equation $<Bx,x>=0$ is the conic given by the equation $<B^{-1}x,x>=0$. 
\end{proposition}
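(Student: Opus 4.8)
The plan is to compute the dual curve by hand, using the gradient description of the tangent lines of a smooth conic; this is the classical fact that the projective dual of a conic $\{<Bx,x>=0\}$ is $\{<B^{-1}x,x>=0\}$, transported through the orthogonal polarity. First I would unwind the definition of the orthogonal polarity: a projective line in $\cp^2_{(x_1:x_2:x_3)}$ is the projectivization of a plane $H\subset\cc^3$ through the origin, and it is sent to the point $[\xi]\in\cp^2$ spanning the orthogonal complement $H^{\perp}$ with respect to the complex-bilinear form $dx_1^2+dx_2^2+dx_3^2$. Equivalently, the line $\{[x]\,:\,<\xi,x>=0\}$ is dual to the point $[\xi]$; this makes sense for every line, since a $2$-dimensional $H\subset\cc^3$ has a $1$-dimensional orthogonal complement regardless of isotropy.

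Next I would use non-degeneracy of $B$ to conclude that the conic $\mcc=\{<Bx,x>=0\}\subset\cp^2$ is regular, hence every point of $\mcc$ is smooth and carries a well-defined tangent line. At a point $[x_0]\in\mcc$ the Euclidean gradient of $Q(x)=<Bx,x>$ is $\nabla Q(x_0)=2Bx_0\neq0$ (as $B$ is invertible and $x_0\neq0$), so the projective tangent line is $T_{[x_0]}\mcc=\{[x]\,:\,<Bx_0,x>=0\}$, i.e.\ the projectivization of the plane $(Bx_0)^{\perp}$. By the description of the orthogonal polarity above, the point dual to this tangent line is $[Bx_0]$. Thus the dual curve of $\mcc$ is the Zariski closure of $\{[Bx_0]\,:\,<Bx_0,x_0>=0\}$.

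Finally I would make the substitution $y=Bx_0$. Since $B$ is invertible, $[x_0]\mapsto[Bx_0]$ is a projective linear automorphism of $\cp^2$, so as $[x_0]$ ranges over $\mcc$ its image ranges over an entire conic, which therefore equals the dual curve. Writing $x_0=B^{-1}y$ in the relation $<Bx_0,x_0>=0$ gives $<y,B^{-1}y>=0$, hence $<B^{-1}y,y>=0$ by symmetry of $B^{-1}$; so the dual curve of $\mcc$ is exactly $\{<B^{-1}x,x>=0\}$, as claimed. There is no genuine obstacle here: the only points needing care are that non-degeneracy of $B$ makes $\mcc$ smooth (so tangent lines exist at all its points) and makes $x_0\mapsto Bx_0$ a projective isomorphism, which is what lets one pass from the parametrized locus of dual points to the full conic $\{<B^{-1}x,x>=0\}$.
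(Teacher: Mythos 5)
Your proposal is correct and follows essentially the same route as the paper's own proof: identifying the tangent plane at $x_0$ with $(Bx_0)^{\perp}$, so that the dual point is $[Bx_0]$, and then substituting $y=Bx_0$ to obtain the equation $<B^{-1}y,y>=0$. The only difference is cosmetic — you make explicit the smoothness of the conic and the fact that $x\mapsto Bx$ is a projective isomorphism, which the paper leaves implicit.
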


\begin{proof} Consider the cone $K=\{ x\in\cc^3\setminus\{0\} \ | \ <Bx,x>=0\}$ and its tautological projection  $\Gamma=\pi(K)\subset\cp^2$, 
which is the  conic under consideration. Let $x\in K$. The projective tangent  line $L=T_{\pi(x)}\Gamma$  is defined by the tangent plane  
$T_xK$ considered as a vector subspace in $\cc^3$. It follows from definition that $T_xK$ consists 
of those vectors $v$ for which $<Bx,v>=0$. Thus,  $(T_xK)^{\perp}=\cc(Bx)$, 
and the  dual $L^*$ is $\pi(Bx)$. Therefore, the dual  $\Gamma^*$ is the projection 
 $\pi(B(K))$, which  is obviously defined by the equation $<B(B^{-1}y),B^{-1}y>=<B^{-1}y,y>=0$. 
This proves the proposition.
\end{proof}

\begin{definition} \label{deuclid} \cite[p.84]{veselov2}. Let $A, B$ be two real 
non-proportional  symmetric $3\times 3$-matrices. They define a 
 {\it pseudo-Euclidean pencil} of conics in $\rp^2$: the conics given by the equation
 $$\{<(B-\la A)M,M>=0\}\subset\rp^2_{(M_1:M_2:M_3)},  \ \la\in\rr.$$
 The same pencil of complex conics in $\cp^2$  depending on $\la\in\cc$ will be also called pseudo-Euclidean. 
\end{definition}

\begin{corollary} \label{self} The $\Sigma$-duality 
transforms every confocal pencil of conics to the corresponding pseudo-Euclidean pencil. Namely, for every real 
symmetric $3\times 3$-matrix $B$ satisfying the conditions of Definition \ref{pencil} 
for any two conics in 
$\Sigma$ lying in the  confocal pencil (\ref{confb}) defined by $B$  
their $\Sigma$-dual curves lie in conics belonging to the pseudo-Euclidean pencil defined by  the same matrix $B$. In the non-Euclidean case, when 
the absolute $\ii$  is a regular conic, $\ii$ is self-dual with respect to complex orthogonal polarity.  
\end{corollary}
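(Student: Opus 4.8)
The plan is to reduce the corollary to Proposition~\ref{classb} by recognizing the $\Sigma$-dual of a confocal conic as the ordinary orthogonal-polar dual of an ambient plane conic. First I would fix $\la$ with $\det(B-\la A)\neq0$, so that $B_\la=(B-\la A)^{-1}$ is a well-defined non-degenerate symmetric $3\times3$-matrix and $\Gamma_\la=\Sigma\cap\{<B_\la x,x>=0\}$ is a conic of $\Sigma$ in the sense of (\ref{confb}). Since $x\mapsto<B_\la x,x>$ is homogeneous of degree $2$, the tautological projection $\pi$ carries the point set $\Gamma_\la$ into the plane conic $\{<B_\la x,x>=0\}\subset\cp^2$; moreover $\pi(\Gamma_\la)$ is Zariski-dense in it. Indeed, in the Euclidean model $\pi$ restricts to the affine-chart embedding of $\{x_3=1\}$, while in the sphere and pseudo-sphere models $\pi$ is a finite covering onto an open subset of $\cp^2$, so in every case $\pi(\Gamma_\la)$ (or its complexification) is Zariski-dense in the irreducible conic $\{<B_\la x,x>=0\}$. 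Since the projective dual of an irreducible plane curve depends only on its Zariski closure, the $\Sigma$-dual curve $\Gamma_\la^{*}$ equals the orthogonal-polar dual of the plane conic $\{<B_\la x,x>=0\}$.

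Next I would apply Proposition~\ref{classb} with the matrix $B_\la$: the orthogonal-polar dual of $\{<B_\la x,x>=0\}$ is $\{<B_\la^{-1}x,x>=0\}$. Because $B_\la^{-1}=B-\la A$, this means $\Gamma_\la^{*}$ lies in $\{<(B-\la A)x,x>=0\}$, which is exactly the member at parameter $\la$ of the pseudo-Euclidean pencil of $A$ and $B$ from Definition~\ref{deuclid}. Applying this to the two given confocal conics --- whose parameters may be assumed non-degenerate, or else reached by analytic continuation in $\la$ --- proves the first assertion, together with the exact matching of the parameters of the two pencils.

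Then I would treat the degenerate values $\la$ with $\det(B-\la A)=0$, where $\Gamma_\la$ is a geodesic, and afterwards the self-duality of $\ii$. With $K_\la=Ker(B-\la A)$, the projection $\pi(\Gamma_\la)$ lies in the projective line determined by the plane $K_\la^{\perp}$, so its $\Sigma$-dual is a point of $\pi(K_\la)$; on the other hand the form $<(B-\la A)x,x>$ has radical $K_\la$, so the corresponding member of the pseudo-Euclidean pencil is a degenerate conic with singular locus $\pi(K_\la)$ (a pair of lines through $\pi(K_\la)$ when $\dim K_\la=1$, the double line $\pi(K_\la)$ when $\dim K_\la=2$), and the required inclusion of the $\Sigma$-dual point into it is then immediate. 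Finally, in the non-Euclidean case $A=\diag(1,1,\pm1)$ is non-degenerate and $A^{2}=Id$, hence $A^{-1}=A$; Proposition~\ref{classb} then yields that the orthogonal-polar dual of $\ii=\{<Ax,x>=0\}$ is $\{<A^{-1}x,x>=0\}=\ii$, so $\ii$ is self-dual.

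The only genuinely delicate point is the Zariski-density step of the first paragraph: one must check, model by model, that $\pi$ sends each confocal conic of $\Sigma$ onto a Zariski-dense subset of the corresponding plane conic, so that the $\Sigma$-dual coincides with the plane projective dual and Proposition~\ref{classb} becomes applicable. Everything after that is a one-line substitution, using $B_\la^{-1}=B-\la A$ for the pencil statement and $A^{-1}=A$ for the self-duality of the absolute.
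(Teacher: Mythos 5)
Your proposal is correct and follows essentially the same route as the paper, which treats the pencil statement as an immediate consequence of Proposition~\ref{classb} applied to $B_\la=(B-\la A)^{-1}$ (so that the dual conic is $\{<B_\la^{-1}x,x>=0\}=\{<(B-\la A)x,x>=0\}$) and derives self-duality of $\ii$ from $A^2=Id$. Your extra care about Zariski density and the degenerate parameters is harmless but not really needed for the stated inclusion, since each tangent line of $\pi(\Gamma_\la)$ is a tangent line of the ambient plane conic, so its dual point lies on the dual conic pointwise.
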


The first statement of the corollary is obvious. The self-duality 
follows from Proposition \ref{classb} and  involutivity: 
$A^2=Id$ in the non-Euclidean case. 
 
 \begin{proof} {\bf of Theorem \ref{bol2}.} Let $\Omega\subset\Sigma$ be a polynomially integrable billiard. 
 Let $\Psi(M_1,M_2,M_3)$ be a non-trivial homogeneous 
 polynomial integral of the billiard $\Omega$ of even degree $2n$. Consider the affine chart $M_3\neq0$ on 
 $\cp^2_{(M_1:M_2:M_3)}$ with  coordinates $(x,y)$:  $x=\frac{M_1}{M_3}$, $y=\frac{M_2}{M_3}$. Set 
 $$\mcq(x,y)=<AM,M>, \text{ where } M=(x,y,1):$$
 $$\mcq(x,y)=x^2+y^2 \text{ in the Euclidean case; otherwise } \mcq(x,y)=x^2+y^2\pm1.$$
 In this  affine chart  the function $G$ on $\cp^2$  from (\ref{ratg}) takes the form
 $$G(x,y)=\frac{F(x,y)}{(\mcq(x,y))^n}, \  F(x,y)=\Psi(x,y,1), \ \deg F\leq2n.$$
 In what follows for every conic $\alpha\subset\Sigma$ the corresponding complex conic containing its $\Sigma$-dual 
 $\alpha^*$ will be denoted by $\wt\alpha^*$. 
 
Let the boundary $\partial\Omega$ contain an arc of a conic $\alpha$. 
 Let $\mcc$ denote the confocal conic pencil containing $\alpha$, and let $\mcc^*$ denote the corresponding 
 ($\Sigma$-dual) pseudo-Euclidean pencil of conics containing $\wt\alpha^*$: 
 $$\kla=\{ <B_{\la}X,X>=0\}\subset\rr^3_{(X_1,X_2,X_3)}, \ B_{\la}=(B-\la A)^{-1},\  \mcc_{\la}=\kla\cap\Sigma;$$ 
 $$ \kla^*=\{<(B-\la A)M,M>=0\}\subset\cc^3_{(M_1,M_2,M_3)}, \ 
\mcc^*_{\la}=\pi(\kla^*\setminus\{0\})\subset\cp^2,$$
$$\kappa^*_{\infty}=\widehat\ii=\{<AM,M>=0\}\subset\cc^3, \ \mcc^*_{\infty}=\pi(\kappa^*_{\infty}\setminus\{0\})=\ii.$$

  \medskip

 {\bf Claim 1.} {\it Each  $C^2$-smooth  arc  of the boundary $\partial\Omega$ with non-zero geodesic curvature
 lies in a conic confocal to $\alpha$.}

 \begin{proof} The conic $\wt\alpha^*$ generates a rationally integrable $\ii$-angular billiard with integral $G$, 
 by Corollary \ref{iang}. On the other hand, it is known that the billiard on a conic $\alpha$ 
 admits a non-trivial quadratic homogeneous first integral $\wt\Phi=\wt\Phi(M)$, see 
 \cite[proposition 1]{bolotin2}. Set 
 $$\wt F(x,y)=\wt\Phi(x,y,1), \ \wt G(x,y)=\frac{\wt F(x,y)}{\mcq(x,y)}.$$
   
{\bf Claim 2.} {\it The level curves of the function $\wt G$ are conics from the pencil $\mcc^*$, 
and the function $G$ is constant on each of them.}

\begin{proof} For every conic $\beta$ 
 confocal to $\alpha$ the quadratic  integral $\wt\Phi$ is also an integral for the billiard on the conic $\beta$. This is 
 well-known, see \cite{bolotin2}, and 
 follows from the explicit formula \cite[formula (12)]{bolotin2} for the quadratic integral. Therefore, both corresponding 
 complexified dual  conics $\wt\alpha^*$ 
 and $\wt\beta^*$ generate rationally integrable $\ii$-angular billiards with a common quadratic rational integral $\wt G$ having first order 
 pole at $\ii$, by Corollary \ref{iang}. Hence, 
 $\wt G$ is constant on $\wt\alpha^*$ and $\wt\beta^*$, by Proposition \ref{propc}. Thus, the integral $\wt G$ is 
 constant on every conic from the complex pseudo-Euclidean pensil $\mcc^*$, since the above conics $\wt\beta^*$ with 
 $\beta$ being confocal to $\alpha$ 
 form a real one-dimensional subfamily in  $\mcc^*$.  Let us normalize the integral $\wt G$ by additive constant (or equivalently, the integral $\wt\Phi$ by addition of 
 $c<AM,M>$, $c=const$) 
so that  $\wt G|_{\wt\alpha^*}\equiv0$. After this normalization one has $\wt F|_{\wt\alpha^*}\equiv0$: that is, 
 $\wt F$ is the quadratic polynomial defining the conic $\wt\alpha^*$. On the other 
 hand, $\wt\alpha^*$ generates a rationally integrable $\ii$-angular billiard with integral 
 $G$ (Corollary \ref{iang}). Hence,  $G|_{\wt\alpha^*}\equiv c_1=const$, 
  by Proposition \ref{propc}. Therefore, 
 $$G(x,y)=c_1+G_1(x,y)\wt G(x,y),$$
 $$G_1(x,y)=\frac{f_1(x,y)}{(\mcq(x,y))^{n-1}}, \  \deg f_1\leq 2n-2.$$
 Hence, the fraction $G_1$ is also a rational integral  of the $\ii$-angular billiard generated by $\wt\alpha^*$, 
 as are $G$ and $\wt G$. Thus, $G_1|_{\wt\alpha^*}\equiv c_2= const$, by 
 Proposition \ref{propc}. Similarly we get that 
 $$G_1(x,y)=c_2+G_2(x,y)\wt G(x,y), \ G_2(x,y)=\frac{f_2(x,y)}{(\mcq(x,y))^{n-2}},$$
 $\deg f_2\leq 2n-4$, and $G_2$ is an integral of the $\ii$-angular billiard generated by $\wt\alpha^*$, as are $G_1$ and 
 $\wt G$. 
 Continuing this prodecure we get that $G$ is a polynomial in $\wt G$. Hence, $G\equiv const$ on the level curves of the 
 function $\wt G$, that is, on the conics from the pencil $\mcc^*$. 
 Claim 2 is proved.
 \end{proof}
 
 Let $\phi$ be a $C^2$-smooth arc in $\partial\Omega$ with non-zero geodesic curvature, 
 and let $\phi^*\subset\rp^2\subset\cp^2$ 
 denote its $\Sigma$-dual curve. 
 The curve $\phi^*$ lies in a level curve of the function $G$, by Theorem \ref{ext}, statement 1). Hence, it lies in 
 a finite union of  conics from the pencil $\mcc^*$, since each level curve of the function $G$ is a finite union of conics in  $\mcc^*$  (follows from Claim 2). Therefore, $\phi$ lies in just one conic confocal to $\alpha$, by 
 smoothness, since any two intersecting confocal conics are orthogonal. 
 This proves Claim 1.
 \end{proof}

Now it remains to show that if $\partial\Omega$ contains geodesic segments, then their ambient geodesics are admissible 
with respect to the pencil $\mcc$, see Definition \ref{adm}. As it is shown below,  this is implied by the following proposition.

\begin{proposition} \label{pis} Let $B$ be a real symmetric $3\times3$-matrix as in Definition \ref{pencil}. 
Let $\mathcal C$ denote the 
corresponding pencil (\ref{confb}) of confocal conics in $\Sigma$. The corresponding admissible 
geodesics in $\Sigma$ from Definition \ref{adm} are exactly those geodesics $\widehat l$, for which the  symmetry of the surface 
$\Sigma$ with respect to $\widehat l$  leaves the pencil $\mathcal C$ invariant: the symmetry permutes confocal conics. 
Or equivalently,  the geodesics $\widehat l$ for which the 
 $\ii$-angular symmetry with center $\widehat l^*$ $\Sigma$-dual to $\widehat l$ leaves the $\Sigma$-dual 
 pseudo-Euclidean pencil $\mcc^*$ invariant. 
\end{proposition}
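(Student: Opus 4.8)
The plan is to pass to the $\Sigma$-dual picture, where (by Corollary \ref{self}) the confocal pencil $\mcc$ becomes the pseudo-Euclidean pencil $\mcc^*$, i.e. the projectivization of the plane $\langle A,B\rangle$ of symmetric $3\times3$-matrices $\alpha A+\beta B$, with the absolute $\ii$ among its members (as in the proof of Theorem \ref{bol2}). First I would record the elementary duality fact that the reflection of $\Sigma$ in a geodesic $\hat l=\Sigma\cap H$, $H=w^{\perp}$, is $\Sigma$-dual to the $\ii$-angular symmetry $\sigma_{\hat l^*}$ with center $\hat l^*=[w]$. Indeed, that reflection is induced by the linear involution $\rho$ of $\rr^3$ fixing $H$ pointwise and sending $Aw$ to $-Aw$ (one checks $\rho$ preserves $\Sigma$ — in the non-Euclidean case because $\langle Aw\rangle$ is the $<Ax,x>$-orthogonal complement of $H$ so $\rho$ is an isometry, in the Euclidean case because $\rho$ preserves the $x_3$-coordinate); projectively $\rho$ is the harmonic homology with center $[Aw]$ and axis $\pi(H)$, and its conjugate by the orthogonal polarity is the harmonic homology with center $[w]$ and axis $\pi((Aw)^{\perp})$, which is exactly the $\ii$-polar line of $[w]$, hence equals $\sigma_{[w]}$. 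So ``the $\Sigma$-reflection in $\hat l$ permutes $\mcc$'' is equivalent to ``$\sigma_{\hat l^*}$ permutes $\mcc^*$'', and the proposition reduces to the projective statement: for $p\in\cp^2\setminus\ii$, the $\ii$-angular symmetry $\sigma_p$ leaves the pencil $\langle A,B\rangle$ invariant if and only if the geodesic $\Sigma$-dual to $p$ lies in the list of Definition \ref{adm}.

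To prove this, let $S$ be a linear representative of $\sigma_p$, acting on symmetric matrices by $C\mapsto S^{T}CS$; since $\sigma_p$ maps $\ii$ to itself, $[A]$ is a fixed point of this action, so $\sigma_p$ leaves $\langle A,B\rangle$ invariant if and only if $S^{T}BS\in\langle A,B\rangle$, and then $\sigma_p$ induces an involution $\tau$ of $\langle A,B\rangle\cong\cp^1$ fixing $[A]$. I would first show that $\tau=\mathrm{id}$ exactly when $p=[v]$ satisfies $Bv\parallel Av$: a direct computation gives $S^{T}BS-B=-\frac{2}{\mu}\bigl((Av)(Bv)^{T}+(Bv)(Av)^{T}\bigr)+\frac{4\nu}{\mu^{2}}(Av)(Av)^{T}$ with $\mu=v^{T}Av$, $\nu=v^{T}Bv$, which vanishes precisely when $Bv\parallel Av$, i.e. when $v$ spans $\ker(B-\la A)$ for some $\la$. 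These $p$ are exactly the vertices $[v_{\la}]$ of the degenerate members $\mcc^{*}_{\la}$ of the pencil with $\det(B-\la A)=0$, and they are $\Sigma$-dual precisely to the geodesics $\Gamma_{\la}$ of (\ref{geodconf}) and (in the two-dimensional kernel case) to the $\Gamma_{\la}(H)$ — this is item 1) of Definition \ref{adm}.

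If $\tau\neq\mathrm{id}$, its second fixed point is a conic $Q'\neq\ii$ in the pencil with $\sigma_p(Q')=Q'$; conversely, any member $Q'\neq\ii$ of the pencil with $\sigma_p(Q')=Q'$ forces $\sigma_p$ to preserve $\langle A,B\rangle=\langle\ii,Q'\rangle$. So it remains to decide when the pencil contains a $\sigma_p$-invariant conic other than $\ii$. In coordinates adapted to $\sigma_p$ (center $[1:0:0]$, axis $\{x_0=0\}$) a conic is $\sigma_p$-invariant if and only if either its matrix is block-diagonal $\mathrm{diag}(c,q)$ with $q$ a binary quadratic in $(x_1,x_2)$ (``type I''; $\ii$ itself is of this type, $\{x_0=0\}$ being the $\ii$-polar of $[1:0:0]$), or it is a pair of lines $\{x_0=0\}\cup m$ with $m$ a line through $[1:0:0]$ (``type II''). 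Since the type-I matrices form a linear subspace, if the pencil contained two type-I conics it would lie inside that subspace and $\tau$ would be the identity; hence in the case $\tau\neq\mathrm{id}$ the conic $Q'$ is of type II, i.e. it is a degenerate member of the pencil one of whose components is the $\ii$-polar $\ell_p$ of $p$ and the other a line through $p$. Writing the quadratic form of such a degenerate member $B-\la_0 A$ as $<n_1,x><n_2,x>$ and imposing $\pi(n_1^{\perp})=\ell_p=\pi((Av)^{\perp})$ and $<n_2,v>=0$ forces $n_1\parallel Av$, hence $B-\la_0 A$ is (up to scalar) $Aa\otimes b+b\otimes Aa$ with $a=v$, $b=n_2$, $<a,b>=0$ — the special form of item 2), with $p=[a]$; by the symmetry of this form noted in Remark \ref{permgeod} one also gets $p=[Ab]$. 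In the non-Euclidean case $A^{2}=\mathrm{Id}$, $\ii$ is a regular conic, and both $[a]$ and $[Ab]$ occur, which is subcase 2a) and recovers the geodesics (\ref{newgeod}); the Euclidean subcases 2b) and 2c) — where $A=\diag(1,1,0)$ is singular, $\ii$ is a pair of isotropic lines through $O=[0:0:1]$, and in 2c) moreover $\det(B-\la A)\equiv0$ — would be treated separately, checking that $\pi((Av)^{\perp})$, which is always a line through $O$, can coincide with the relevant component of $B-\la_0 A$ only in the configurations listed there.

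The step I expect to be the main obstacle is precisely this last one, together with the non-generic pencils feeding into it: a two-dimensional kernel $\ker(B-\la A)$, an identically vanishing discriminant $\det(B-\la A)\equiv0$, double-line members of the pencil, and above all the Euclidean case, where the singularity of $A$ and reducibility of $\ii$ force the classification of $\sigma_p$-invariant conics and the pole/polar bookkeeping to be redone by hand in order to match the projective dichotomy of the previous steps with the precise sublist 1)--2c) of Definition \ref{adm} (in particular the case of pencils carrying no conic at all, which does not arise for actual billiards, must be separated off).
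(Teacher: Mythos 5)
Your proposal is correct and follows the same overall strategy as the paper's proof: pass to the $\Sigma$-dual picture, reduce to the question of when the pseudo-symmetry $S$ with respect to $\cc v$ preserves the pencil spanned by $A$ and $B$, and split the answer into the two cases matching items 1) and 2) of Definition \ref{adm}. The two sub-lemmas are packaged differently. For the duality conjugation the paper projectivizes the moment isomorphism $\mcm_r:T_r\Sigma\to r^{\perp}$ (as in the proof of Theorem \ref{psimm}), whereas you conjugate harmonic homologies by the orthogonal polarity; both arguments are sound, including your Euclidean caveat about $\rho$ preserving $x_3$. For the classification, the paper first normalizes so that $<Bv,v>=0$ and characterizes $S$-invariance of the single cone $\kappa_0^*=\{<Bx,x>=0\}$ by slicing with planes parallel to $H_A$ (its Claims 3 and 4), whereas you study the induced involution $\tau$ of the pencil and classify $\sigma_p$-invariant conics by the $\pm1$-eigenvalue dichotomy; the resulting alternatives (both planes of the degenerate member through $\cc v$, versus $H_A$ together with a plane through $\cc v$) coincide exactly with the paper's cases $\alpha$ and $\beta$, and your explicit formula for $S^{T}BS-B$ correctly replaces the paper's normalization trick. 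As for the step you flag as the main obstacle: the paper is equally terse there --- its case $\beta$ simply declares the geodesic ``admissible of type 2), the first geodesic in (\ref{newgeod})'' and invokes Remark \ref{permgeod} for the converse, without visibly separating subcases 2a)--2c); in particular the degenerate subcase 2c), where $\det(B-\la A)\equiv0$ and the pencil carries no actual conic (so it never arises for a billiard), is not treated any more explicitly in the published proof than in your plan. So the deferred bookkeeping is real but does not put you behind the paper's own argument.
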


\begin{remark} We will be using only the second  statement of Proposition \ref{pis} characterizing 
admissible geodesics $\widehat l$ in terms of $\ii$-angular symmetry with center $\widehat l^*$ of the pencil $\mcc^*$. 
Their characterization in terms of symmetry of the pencil $\mcc$ will be proved just for completeness of presentation.
\end{remark} 

\begin{proof} {\bf of Proposition \ref{pis}.} Let us first prove that for every given geodesic $\widehat l\subset\Sigma$ the  two statements 
of the proposition are indeed equivalent. As it is shown below, this is implied by the following proposition.

\begin{proposition} \label{symequ} Consider the action of the symmetry with respect to a given geodesic $\widehat l\subset\Sigma$ 
on the space of all the geodesics in $\Sigma$. The $\Sigma$-duality conjugates this action to the 
$\ii$-angular symmetry with center $\widehat l^*$. 
\end{proposition}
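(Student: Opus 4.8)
The plan is to realize the geodesic symmetry $S_{\hat l}$ by an ambient \emph{linear} involution of $\rr^3$ and then transport it through the $\Sigma$-duality by a short linear-algebra computation. Write $\tilde H\subset\rr^3$ for the two-dimensional subspace with $\pi(\tilde H\setminus\{0\})=L$, the projective line containing $\pi(\hat l)$, and let $n\in\rr^3$ be a standard-Euclidean normal to $\tilde H$, so that $\pi(\rr n)=L^*=\hat l^*$. The statement presupposes $\hat l^*\notin\ii$, i.e.\ $<An,n>\neq0$; this in fact holds for every geodesic $\hat l\subset\Sigma$ (a two-dimensional subspace meeting $\Sigma$ is non-degenerate for $<A\cdot,\cdot>$). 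Consider the linear involution $R=R_{\hat l}:\rr^3\to\rr^3$ fixing $\tilde H$ pointwise and acting as $-\mathrm{Id}$ on the $<A\cdot,\cdot>$-orthogonal complement of $\tilde H$; one checks that this complement is the line $\rr(An)$, so $R(An)=-An$. Being $\pm\mathrm{Id}$ on an $<A\cdot,\cdot>$-orthogonal decomposition, $R$ preserves the form $<A\cdot,\cdot>$, hence (in the non-Euclidean case) preserves the level set $\Sigma$; its restriction to $\Sigma$ is then an involutive isometry fixing $\hat l$ pointwise, hence equals $S_{\hat l}$.

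Next I compute the $\Sigma$-dual of the $S_{\hat l}$-action on geodesics. A geodesic $\hat m=\Sigma\cap\tilde H_m$ has $\Sigma$-dual point $\hat m^*=\pi(\rr n_m)$ with $n_m$ a standard normal to $\tilde H_m$, and $S_{\hat l}(\hat m)=\Sigma\cap R(\tilde H_m)$. Since $x\in R(\tilde H_m)$ iff $<n_m,R^{-1}x>=0$ iff $<(R^{-1})^{T}n_m,\,x>=0$, and $R^{-1}=R$, the standard normal of $R(\tilde H_m)$ is $R^{T}n_m$. Thus, after $\Sigma$-duality, the action of $S_{\hat l}$ on the space of geodesics becomes the projective involution $[\,n'\,]\mapsto[\,R^{T}n'\,]$ of $\cp^2$.

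It remains to identify $R^{T}$ projectively. In the non-Euclidean case $A^2=\mathrm{Id}$, and $R^{T}AR=A$ together with $R^{-1}=R$ gives the identity $R^{T}=ARA$. Using $R(An)=-An$, the relation $A(n^{\perp})=\{y:<An,y>=0\}$, and $R|_{n^{\perp}}=\mathrm{Id}$, a direct verification shows that $ARA$ negates the line $\rr n$ and fixes the plane $\{y:<An,y>=0\}$ pointwise; hence $ARA$ equals, up to the overall factor $-1$, the pseudo-symmetry of $\rr^3$ with respect to the line $\rr n$. Projectively, $[\,n'\,]\mapsto[\,R^{T}n'\,]=[\,ARA\,n'\,]$ is therefore exactly the $\ii$-angular symmetry with center $\pi(\rr n)=\hat l^*$, which is the desired conclusion. (Equivalently, and more synthetically: the $\Sigma$-dual of $S_{\hat l}$ is a projective involution of $\cp^2$ fixing $\hat l^*$; every line through $\hat l^*$ is the $\Sigma$-dual $C'^*$ of a point $C'\in\hat l$ and is preserved because $S_{\hat l}$ fixes $C'$; and since $dS_{\hat l}$ at $C'$ is a metric-preserving reflection of $T_{C'}\Sigma$, it swaps the two complex isotropic directions at $C'$, hence swaps the two points of $C'^*\cap\ii$ — which characterizes the $\ii$-angular symmetry with center $\hat l^*$.)

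The step I expect to be the main obstacle is the Euclidean case, where $A=\operatorname{diag}(1,1,0)$ is singular and $\Sigma=\{x_3=1\}$ is not a level set of $<A\cdot,\cdot>$, so the ambient realization of $S_{\hat l}$ fixes the linear functional $x_3$ rather than the form $<A\cdot,\cdot>$ and the slick identity $R^{T}=ARA$ is unavailable. There the conclusion is obtained either by the analogous computation carried out by hand in the affine chart $\{x_3=1\}$ (with $\ii$ the pair of isotropic lines through $(0:0:1)$, and $\hat l^*$ always outside it), or by continuity, rescaling the curvature to $0$; the identity, valid for all non-zero curvature, then passes to the limit. Everything outside this point is the displayed identity together with the explicit eigenstructure of $R$.
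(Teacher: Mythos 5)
Your argument is correct but proceeds along a genuinely different route from the paper's. The paper restricts attention to the pencil of geodesics through a point $r\in\hat l$ (enough by analyticity, since such geodesics form an open set in the connected manifold of geodesics), identifies that pencil with $\mathbb{P}(T_r\Sigma)$, and observes that the $\Sigma$-duality on this pencil is the projectivization of the moment map $\mcm_r:T_r\Sigma\to V_r=r^{\perp}$, $v\mapsto[r,v]$, which by Proposition \ref{vectis} is an isometry for $<A\cdot,\cdot>$ and hence (as already used in the proof of Theorem \ref{psimm}) conjugates the pseudo-symmetry of $T_r\Sigma$ with respect to $T_r\hat l$ with the pseudo-symmetry of $V_r$ with respect to $(T_r\hat l)^r$; the latter projectivizes to the $\ii$-angular symmetry with center $\hat l^*$. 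You instead realize the reflection $S_{\hat l}$ globally as a linear involution $R$ of $\rr^3$ and compute the induced map on normal directions via the transpose, identifying $R^{T}=ARA$ with minus the pseudo-symmetry in the line $\rr n$. What your approach buys is a direct, global computation with no appeal to analytic continuation and no reliance on Theorem \ref{psimm}; what the paper's approach buys is uniformity, since $\mcm_r$ and Proposition \ref{vectis} work verbatim for the degenerate matrix $A=\diag(1,1,0)$, so the Euclidean case requires no separate treatment.

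The one real weakness is exactly the point you flag: in the Euclidean case your proof is only a sketch, and "rescaling the curvature to $0$ and passing to the limit" is not an argument as stated (you would have to exhibit a continuous family of surfaces, geodesics, dualities and symmetries and check that both sides of the identity converge, which is more work than the direct computation). Fortunately the direct computation goes through with almost no change and does not need $A$ invertible: the affine reflection of $\{x_3=1\}$ in $\hat l$ still extends to a linear involution $R$ fixing $\tilde H$ pointwise and negating $\rr(An)$ (the horizontal direction Euclidean-perpendicular to the direction of $\hat l$), the dual action is still $[n']\mapsto[R^{T}n']$, and $R^{T}$, being the transpose of an involution with eigenspace decomposition $\tilde H\oplus\rr(An)$, has $-1$-eigenspace $\tilde H^{\perp}=\rr n$ and $+1$-eigenspace $(An)^{\perp}=\{y:<An,y>=0\}$, i.e. $-R^{T}$ is precisely the pseudo-symmetry with respect to $\rr n$. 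I recommend you replace the appeal to $R^{T}=ARA$ by this eigenspace identification throughout: it covers both cases at once and shortens the proof.
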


\begin{proof} It suffices to prove the above conjugacy on the space of those geodesics that intersect $\widehat l$, 
by analyticity and since they form an open subset in the connected manifold of geodesics. Each geodesic 
through a point $r\in\widehat l$ is uniquely determined by its tangent line: a one-dimensional subspace 
$\Lambda\subset T_r\Sigma$. Thus, it suffices to show that the $\Sigma$-duality conjugates the 
symmetry action on the projectized tangent  plane $\mathbb{P}(T_r\Sigma)$ with the $\ii$-angular symmetry 
centered at $\widehat l^*$. Indeed, the $\Sigma$-duality sends each one-dimensional subspace 
$\Lambda\subset T_r\Sigma$ to the point $\widehat\Lambda^*\in\rp^2$ represented by the one-dimensional vector subspace 
 $\Lambda^r\subset\rr^3$ orthogonal to both $r$ and $\Lambda$ (see Convention \ref{conven}). 
The linear isomorphism $\mcm_r:T_r\Sigma\to V_r=r^{\perp}$, $v\mapsto[r,v]$ sends each subspace 
$\Lambda$  to $\Lambda^r$ and conjugates the pseudo-symmetries with respect to the lines 
$T_r\widehat l\subset T_r\Sigma$ and $(T_r\widehat l)^r\subset V_r$, by definition and Corollary \ref{cve}. 
Therefore, its projectivization realizes the $\Sigma$-duality $\mathbb{P}(T_r\Sigma)\to\mathbb P(V_r)$ 
  and conjugates the action of the symmetry with 
respect to the line $T_r\widehat l$ on the source with the projectivized pseudo-symmetry of the 
image:  the $\ii$-angular symmetry with center $\widehat l^*=\pi((T_r\widehat l)^r)$ (Proposition \ref{projps}). Proposition \ref{symequ} is proved. 
\end{proof}

Note that for every curve $\gamma\subset\Sigma$ the $\Sigma$-duality sends the 
family of geodesics tangent to $\gamma$ to the $\Sigma$-dual curve $\gamma^*$  (see Convention \ref{conven}). 
This together with the 
above proposition implies equivalence of the two statements of Proposition \ref{pis}. 
Thus, it suffices to prove its second statement: those geodesics $\widehat l$,  for which 
the pseudo-Euclidean pencil $\mcc^*$ is invariant under the $\ii$-angular symmetry with center $\widehat l^*$,  
are exactly the admissible geodesics from Definition \ref{adm}.   

Fix a geodesic $\widehat l$. Let $H\subset\rr^3$ denote the  two-dimensional vector subspace
 containing $\widehat l$. Fix a vector $a\in H^{\perp}\subset\rr^3$, $a\neq0$. It  represents the $\Sigma$-dual 
$\widehat l^*= \pi(a)$. 
The vector $a$ lies in a unique cone $\kla^*$ with 
$\la\neq\infty$, since $<Aa,a>\neq0$: otherwise, if $<Aa,a>=0$, then the intersection $\widehat l=H\cap\Sigma$ would be empty. 
Indeed, in the Euclidean case the equality $<Aa,a>=0$ on a real vector $a$ 
holds exactly when $a$ lies in the $x_3$-axis; then 
$H$ is parallel to the plane $\Sigma$, $H\cap\Sigma=\emptyset$. 
In the non-Euclidean case the equality $<Aa,a>=0$ implies that $A=\diag(1,1,-1)$ and 
the projective line  $a^*=\pi(H\setminus\{0\})$ is tangent to the real absolute 
$\{<Ax,x>=0\}\subset\rp^2_{(x_1:x_2:x_3)}$, by self-duality (Corollary \ref{self}). Then 
 $H$ is tangent to the cone $\{<Ax,x>=0\}\subset\rr^3$ and hence, it is disjoint from the inner component containing $\Sigma$ of 
 the  complement of the latter cone.  Thus, $H\cap\Sigma=\emptyset$, -- a contradiction.

Without loss of generality we  will consider that $a\in\kappa_0^*$, after replacing $B$ by $B-\la A$ 
for appropriate $\la$, by the inequality $<Aa,a>\neq0$. Let $S:\cc^3\to\cc^3$ denote the pseudo-symmetry with respect to the line 
$\cc a$. 

\medskip
{\bf Claim 3.} {\it The pseudo-Euclidean pencil $\mcc^*$ is invariant under the $\ii$-angular symmetry with center 
$\widehat l^*$, if and only if $S(\kappa_0^*)=\kappa_0^*$.}

\begin{proof} The above $\ii$-angular symmetry is the projectivization of the pseudo-symmetry $S$. 
Therefore,   invariance of the pencil $\mcc^*$ under the $\ii$-angular symmetry is equivalent to the 
$S$-invariance of the family of cones $\kla^*$, 
that is, to the existence of an involution $h:\la\to h(\la)$ such that $S(\kla^*)=\kappa_{h(\la)}^*$. 
In the latter case one has $S(\kappa_0^*)=\kappa_0^*$, since $S(a)=a$, $a\in\kappa_0^*$ 
and $a\notin\kla^*$ for every $\la\neq0$. Conversely, let $S(\kappa_0^*)=\kappa_0^*$. 
This means that the involution $S$ sends the quadratic form $<Bx,x>$ to itself up to sign. Hence, $S(\kla^*)=\kappa_{\pm\la}^*$ 
for every $\la$, since $S$ preserves the quadratic form $<AX,X>$. 
This together with the previous equivalence statement proves the claim.
\end{proof}

{\bf Claim 4.} {\it One has $S(\kappa_0^*)=\kappa_0^*$, if and only if $\kappa_0^*$ is a union of a pair of 
2-planes through the origin 
  in $\cc^3$ that has one of the following types: 
  
 $\alpha$) both planes contain the line $\cc a$ (they may coincide);
 
 $\beta$) one plane in $\kappa_0^*$ contains the line $\cc a$, and the other plane coincides with the two-dimensional 
 subspace $H_A\subset\cc^3$ that is orthogonal to the vector $a$ with respect to the scalar product $<Ax,x>$.}

 \begin{proof} Every hyperplane $W\subset\cc^3$ parallel to the plane $H_A$ is $S$-invariant, and $S$ acts there as 
 the central symmetry with respect to the point $C_W$ of intersection $W\cap(\cc a)$. The $S$-invariance of the cone $\kappa_0^*$ 
 is equivalent to the invariance of each intersection $I_W=W\cap\kappa_0^*$ under the latter symmetry  for every $W$ as above. The  intersection  $I_W$  is either all of $W$, or 
 a line through $C_W$, or a conic in $W$ containing the center of its symmetry $C_W$, since $\cc a\subset\kappa_0^*$. 
 In the latter case $I_W$ is a union of 
 two  lines through $C_W$, since a planar conic central-symmetric with respect to some its point $C$ is 
 a union of two lines through $C$ (the lines under question may coincide). Note that 
 all the intersections $I_W$ with  $W\neq H_A$ are naturally isomorphic between 
 themselves via homotheties centered at the origin, since $\kappa_0^*$ is a cone. 
 Therefore, the following two cases are possible.
 
 $\alpha$) $I_W$ is a union of two  (may be coinciding) lines through $C_W$ for every $W$; then $\kappa_0^*$ 
 is a union of two planes containing the line $\cc a$.
 
 $\beta$) $I_W$ is a line for all $W\neq H_A$,  and $I_W=W$ for $W=H_A$; then $\kappa_0^*$ 
 is a union of the plane $H_A$ and another plane containing $\cc a$.
 
This proves the claim. 
\end{proof}

Now let us return to the proof of Proposition \ref{pis}. Let the pencil $\mcc^*$ be invariant under the $\ii$-angular symmetry 
centered at $\widehat l^*$; or equivalently, the cone $\kappa_0^*=\{<Bx,x>=0\}$ be a union of two planes, as in Claim 4. 

Case $\alpha$). The above planes both contain $a$, thus 
 $a\in \ker B$; $dim (\ker B)=1$, if the planes are distinct; $dim (\ker B)=2$, if they coincide. 
  Hence, the hyperplane $H$ orthogonal to $a$ with respect to the standard Euclidean scalar product 
  is orthogonal to $\ker B$. Therefore, the geodesic $\widehat l=H\cap\Sigma$ is admissible of  type 1) in Definition \ref{adm}. 
  Vice versa, each admissible geodesic of type 1) 
can be represented as above after replacing $B$ by $B-\la A$.

Case $\beta$). Then the cone $\kappa_0^*$ is the union of the plane $H_A$ and a plane $\Pi$ containing the line $\cc a$. The 
plane $\Pi$ is the complexification of a real plane, which will be here also denoted by $\Pi$, since $\kappa_0^*$ is defined by a 
quadratic equation over real numbers and $H_A$ is the complexification of a real plane. 
Let $b\in\rr^3\setminus\{0\}$ denote a  vector Euclidean-orthogonal to $\Pi$. Thus,  $<a,b>=0$. Note that the vector $Aa$ is non-zero, 
since $<Aa,a>\neq0$, as was shown above, and it is Euclidean-orthogonal to  $H_A$, by definition. Therefore, 
$<BM,M>=c<Aa,M><b,M>$, $c\in\rr\setminus\{0\}$. Let us normalize the vectors $a$ and $b$ by constant factors so that 
$c=2$. Then the quadratic form $<BM,M>$ can be represented in the tensor form as $Aa\otimes b+b\otimes Aa$. 
The plane $H$ defining the geodesic $\widehat l$ is the plane orthogonal to the vector $a$, by definition. Hence, $\widehat l$ is an admissible 
geodesic of type 2) in Definition \ref{adm}: the first geodesic in (\ref{newgeod}). Vice versa, each geodesic of type 2) 
can be represented as above, see Remark \ref{permgeod}. 
Proposition \ref{pis} is proved.
\end{proof}

Let now $\widehat l\subset\Sigma$ be a geodesic whose some segment is contained in the boundary of the polynomially integrable billiard 
under question. 
The $\ii$-angular symmetry with center $\widehat l^*$  leaves invariant the rational integral $G$, 
by Theorem \ref{ext}. Hence, it permutes the level curves of the 
quadratic rational function $\wt G$, and the pencil $\mcc^*$ is invariant, by  Claim 2. 
Thus, the geodesic $\widehat l$ is admissible, by Proposition \ref{pis}. Theorem \ref{bol2} is proved.
\end{proof}
  
\section{Bialy--Mironov Hessian Formula and  asymptotics of Hessians}
The material of the present section will be used in Section 4 in the proof of Theorem \ref{iii}, statement (ii-b). It includes: 

-   Bialy--Mironov Hessian Formula (\ref{hesf}) recalled in Subsection 3.1;

- the asymptotics of its left- and right-hand sides along those local branches of the curve $\gamma$ that are 
transversal to $\ii$  (Subsection 3.4).

In the proof of the above asymptotics 
we use general asymptotic formulas 

- for the defining function of an irreducible  germ $a$ of analytic curve along another irreducible germ $b$  (Subsection 3.2); 

- for the Hessian $H(f)$ of defining function of a given germ $b$ along $b$ (Subsection 3.3). 

\subsection{Bialy--Mironov formula}
Let $\gamma\subset\cp^2$ be an irreducible algebraic curve generating a rationally integrable 
$\ii$-angular billiard with integral $G$. The function $G$ has poles contained in $\ii$ and is constant 
on $\gamma$, by Proposition \ref{propc}. 
In what follows we normalize it so that $G|_{\gamma}\equiv0$, and set 
$$\Gamma=\{ G=0\}\supset\gamma.$$

Fix an affine chart  $\cc^2\subset\cp^2$ with  coordinates $(x,y)$ such that the infinity line is not contained in $\ii$.
 In this chart the function $G$ takes the form
 $$G(x,y)=\frac{F_1(x,y)}{(\mcq(x,y))^n}, \text{ where } F_1 \text{ is a  polynomial of degree at most } 2n,$$
 $$\mcq(x,y) \text{ is a fixed quadratic polynomial defining } \ii: \ \ii=\{\mcq=0\}.$$
Let  $f(x,y)$ be the polynomial defining the curve $\gamma$, which is  irreducible, as is $\gamma$: 
 $\gamma=\{ f=0\}$, the differential $df$ being non-zero on a Zariski open subset in $\gamma$. Recall that the polynomial $F_1$ vanishes on $\gamma$. Therefore,  
 \begin{equation}F_1=f^kg_1, \ k\in\nn, \ g_1 \text{ is a polynomial coprime with } f.\label{fkg}
 \end{equation}
 Set
 \begin{equation} g=g_1^{\frac1k}, \ F=F_1^{\frac1k}=fg, \ m=\frac nk.\label{gff}\end{equation}
 We consider the Hessian quadratic form of the function $f(x,y)$ evaluated on appropriately normalized tangent vector to $\gamma=\{ f=0\}$ at a  
 point $(x,y)$, namely, the skew gradient $(f_y,-f_x)$ with respect to the standard complex symplectic form $dx\wedge dy$:  
 \begin{equation} H(f)=f_{xx}f_y^2-2f_{xy}f_xf_y+f_{yy}f_x^2.\label{hess}
 \end{equation}

 \begin{theorem}  \label{bmf} (see, \cite[theorem 6.1]{bm}, \cite[formulas (16) and (32)]{bm2}) The following formula 
 holds for all $(x,y)\in\gamma$:
\begin{equation}g^3(x,y)H(f)(x,y)=H(gf)=c(\mcq(x,y))^{3m-3}, \ c\equiv const\neq0.\label{hesf}\end{equation}
\end{theorem}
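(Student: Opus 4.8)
The plan is to prove the Bialy--Mironov Hessian formula \eqref{hesf} by a direct local computation along $\gamma$, exploiting the relative $\ii$-angular symmetry property which forces a very rigid behaviour of the level divisor $\Gamma=\{G=0\}$ near $\gamma$. First I would work in the affine chart $(x,y)$ and recall that $\gamma=\{f=0\}$ with $f$ irreducible, $G=F_1/\mcq^n$ with $F_1=f^kg_1$ and $g_1$ coprime with $f$, and set $g=g_1^{1/k}$, $F=fg$, $m=n/k$ as in \eqref{fkg}--\eqref{gff}. The object to be understood is the divisor of the rational function $G$: its zero locus contains $\gamma$ with multiplicity $k$ together with the divisor $\{g_1=0\}$, and its polar locus is supported on $\ii=\{\mcq=0\}$ with multiplicity $n$. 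The key geometric input is statement 1) of Theorem \ref{ext} together with Proposition \ref{propc}: for each $u\in\gamma$ (away from $\ii$) the $\ii$-angular symmetry of the projective tangent line $T_u\gamma$ with center $u$ preserves the intersection divisor $\Gamma\cap T_u\gamma$, and moreover fixes $u$ with multiplicity two because $G|_\gamma\equiv 0$ means $\gamma$ is contained in $\Gamma$, so $u$ is a critical point of $G|_{T_u\gamma}$.

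Next I would translate this into an identity of functions. Parametrise $T_u\gamma$ by an affine coordinate $\zeta$ centered at $u$; the restriction $G|_{T_u\gamma}$ is, up to the factor $\mcq^{-n}$, a polynomial in $\zeta$ whose lowest-order term is quadratic (since $\gamma\subset\Gamma$ and $u\in\gamma$). The $\ii$-angular symmetry with center $u$ is the conformal involution of $T_u\gamma$ fixing $u$ and swapping the two points $T_u\gamma\cap\ii$; its invariance of the whole divisor $\Gamma\cap T_u\gamma$, combined with the fact that $\ii$ meets $T_u\gamma$ at exactly the two poles of $G|_{T_u\gamma}$, says that $G|_{T_u\gamma}$ is invariant under this involution up to an additive constant, hence (after subtracting its value, which is $0$) is an \emph{even} rational function with respect to the symmetry. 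Comparing the quadratic part of the numerator of $G|_{T_u\gamma}$ at $u$ with the denominator expansion, and using that the skew gradient $(f_y,-f_x)$ spans $T_u\gamma$, the coefficient of $\zeta^2$ in $F|_{T_u\gamma}$ is (up to a nonzero universal constant) the Hessian $H(f)(u)$ evaluated on $(f_y,-f_x)$ as in \eqref{hess}, while the value of $\mcq$ at $u$ enters through the denominator. Writing out the evenness/symmetry constraint at second order in $\zeta$ then yields a proportionality between $g^3 H(f)$ and a power of $\mcq$, and tracking the degrees of homogeneity --- $\deg f=\deg\gamma$, $\deg F\le 2m$, $\deg\mcq=2$, and the behaviour at the poles which contribute the exponent $3m-3$ --- pins down that power as $\mcq^{3m-3}$.

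The step I expect to be the main obstacle is the precise bookkeeping that produces the exponent $3m-3$ and the constancy of the factor $c$. There are two places where care is needed: first, one must check that the ``error'' contributions from the branches of $\{g_1=0\}$ passing through points of $T_u\gamma\cap\ii$, and from higher-order Puiseux data of $\gamma$ itself, do not contaminate the $\zeta^2$-coefficient --- this is where one invokes that $g_1$ is coprime with $f$ and, more subtly, the structure of $\Gamma$ near $\ii$; second, identifying $H(gf)$ with $g^3H(f)$ on $\gamma$ is an elementary but slightly delicate Leibniz computation: on $\{f=0\}$ one has $gf\equiv 0$, so $\nabla(gf)=g\nabla f$ there, and differentiating once more the Hessian of a product restricted to the zero set of one factor collapses to $g^3$ times the Hessian of that factor evaluated on the skew gradient. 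Once these two points are settled, the homogeneity count forces $c$ to be a nonzero constant (a rational function of $x,y$ that is regular and homogeneous of degree zero), and since $G$ is non-constant on $\gamma$, $c\neq 0$, completing the proof of \eqref{hesf}.
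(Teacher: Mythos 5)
First, note that the paper does not prove Theorem \ref{bmf} at all: it is imported from Bialy and Mironov (\cite[theorem 6.1]{bm}, \cite[formulas (16) and (32)]{bm2}), accompanied only by a remark explaining why their proofs extend to an arbitrary irreducible curve generating a rationally integrable $\ii$-angular billiard. So your attempt is a reconstruction of an external result rather than of an argument contained in this paper. Your starting point — restrict $G$ to $T_u\gamma$, use the invariance under the $\ii$-angular symmetry $\sigma_u$ centered at $u$, and Taylor-expand at $u$ — is indeed the right one, and your identification of $H(gf)|_{\{f=0\}}=g^3H(f)$ is correct (it is formula (\ref{gradm})); but that is the easy half of the statement.

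The decisive step of your sketch fails as written. Write $G|_{T_u\gamma}=P(\zeta)/Q(\zeta)^n$ in an affine coordinate $\zeta$ centered at $u$; then $P(\zeta)=p_{2k}\zeta^{2k}+p_{2k+1}\zeta^{2k+1}+\dots$ with $p_{2k}$ proportional to $g_1(u)\,(H(f)(u))^k$. Any conformal involution of the line fixing $u$ has derivative $-1$ there, so the coefficient $p_{2k}$ of the \emph{leading, even-order} term is preserved automatically: the ``evenness/symmetry constraint at second order'' from which you propose to extract the proportionality $g^3H(f)\sim\mcq^{3m-3}$ is vacuous and yields no information. The first nontrivial constraint sits at order $\zeta^{2k+1}$ and, after computing $Q\circ\sigma_u$ explicitly, reads $p_{2k+1}=(k-n)\,p_{2k}\,Q'(0)/Q(0)$ — a first-order \emph{differential} relation along $\gamma$ between the $(2k{+}1)$-st tangential derivative of $F_1$, the quantity $g_1H(f)^k$, and the logarithmic derivative of $\mcq$. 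To get from there to the closed-form identity (\ref{hesf}) one must still integrate this relation along $\gamma$, which requires expressing the derivative of $H(F)|_\gamma$ along the curve through the third tangential derivative plus the correction coming from the variation of the tangent line; that integration is the computational core of \cite[theorem 6.1]{bm} and is entirely missing from your plan. Finally, the ``homogeneity count'' you invoke to pin down the exponent $3m-3$ and the constancy of $c$ does not work as stated: in the non-Euclidean normalisation $\mcq=x^2+y^2\pm1$ is not homogeneous, nor are $g$ and $H(f)$, and the exponent is an output of the integration rather than of degree bookkeeping.
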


\begin{remark} In 2008 S.Tabachnikov obtained a version of formula (\ref{hesf}) with $k=1$ and constant right-hand side
for polynomially integrable outer billiards satisfying some non-degeneracy assumptions \cite[p. 102]{tab08}. 
Theorem 6.1 in \cite{bm} deals with a polynomially integrable planar billiard $\Omega\subset\rr^2$, a curve 
$\Gamma_1\subset\rr^2$ that is polar-dual to a $C^2$-smooth arc in $\partial\Omega$ 
with non-zero geodesic curvature, and the absolute $\ii=\{ x^2+y^2=0\}$.  It states that formula (\ref{hesf}) holds along the curve $\Gamma_1$. Then it holds automatically on every irreducible component 
$\gamma$ of its complex Zariski closure. Its proof given in \cite{bm} remains valid for every irreducible algebraic curve 
$\gamma$ generating a rationally integrable $\ii$-angular billiard. The same remark concerns formulas (16) and (32) from the paper 
\cite{bm2}, which deal with the non-Euclidean case and the corresponding absolute $\ii=\{ x^2+y^2\pm1=0\}$. 
These results from \cite{bm, bm2} together cover Theorem \ref{bmf} in the general case, since every conic different from 
a double line is projectively equivalent to some of the above absolutes. 
\end{remark}

 Without loss of generality we  will 
consider that $G$ is an irreducible fraction, that is, its nominator $F_1(x,y)$ does not vanish identically on $\ii$ in the case, when 
$\ii$ is regular, 
and in case, when $\ii$ is a union of two lines $\La_1$ and $\La_2$, one 
has $F_1\not\equiv0$ on each $\Lambda_j$. In the former case 
we can do this, by irreducibility of the conic $\ii$: if $F_1$ vanishes on $\ii$ with a certain multiplicity $s$, then 
we can divide both nominator and denominator in $G$ by $(\mcq(x,y))^s$ and achieve the desired property. 
In the latter case we can do this, by the fact that both lines $\La_1$ and 
$\La_2$  forming $\ii$ enter the divisor of the function $G$ (the zero-pole divisor) with the same multiplicity. 
Indeed, for every $u\in\gamma\setminus\ii$ 
the tangent line $T_u\gamma$ intersects both lines $\La_1$ and $\La_2$, and their intersection points 
with the line $T_u\gamma$ are permuted by its $\ii$-angular symmetry with center $u$, by definition. 
Both intersection points enter the divisor of the function 
$G|_{T_u\gamma}$ with the same multiplicity, by its invariance under the $\ii$-angular symmetry. This implies the above 
statement on coincidence of multiplicities of the lines $\La_1$ and $\La_2$. 

The above discussion implies that $G$ has pole along each irreducible component 
of the conic  
$\ii$. Therefore, no component in $\ii$ is contained in $\Gamma$. 
We choose the above affine chart $\cc^2_{(x,y)}$ 
so that the finite intersection $\Gamma\cap\ii$ lies in $\cc^2$, in particular, $G\not\equiv0$ on the infinity line, hence $\deg F_1=2n$. 
Let  $\Delta$ denote the zero divisor of the function $G$. 
Finally, in our assumptions made without 
loss of generality one has $F_1\not\equiv0$ on every irreducible component of the conic $\ii$, 
\begin{equation*} \Gamma=\{ F_1=0\}, \ \deg F_1=2n,\end{equation*}
\begin{equation}\Delta \text{ is the zero divisor of the polynomial } F_1,\label{deldeg}
\end{equation}
{\it the intersection $\Gamma\cap\ii$, and hence, $\gamma\cap\ii$ 
 lie in the affine chart $\cc^2_{(x,y)}$.} 

\subsection{Asymptotics of defining function} 

\begin{definition} Let $b$ be a non-linear irreducible germ of analytic curve at a point $C\in\cp^2$. An {\it adapted system 
of coordinates} to $b$ is a system of affine coordinates $(z,w)$ centered at $C$ such that the $z$-axis is 
tangent to $b$. In adapted coordinates the germ 
$b$ can be locally holomorphically and bijectively parametrized by 
small complex parameter $t$: 
 \begin{equation}
t\mapsto(t^{q},ct^{p}(1+o(1))), \ \text { as } t\to0; \  \ q,p\in\nn,  \ \ 1\leq q<p, \ \ c\neq0,\label{curve} \end{equation}
$$q=q_b, \ p=p_b, \  \ c=c_b,$$
$$q=1, \ \text{ if and only if } b \text{ is a regular germ.}$$
The {\it projective
Puiseux exponent} \cite[p. 250, definition 2.9]{alg} of the germ $b$ is the ratio
$$r=r_b=\frac{p_b}{q_b}.$$
The germ $b$ is called {\it quadratic}, if $r_b=2$, and is called
{\it subquadratic}, if $r_b\le2$, see \cite[definition 3.5]{gs}.
In the case, when $b$ is a germ of line, it is parametrized by $t\mapsto(t,0)$; then  
we set $q_b=1$, 
$p_b=\infty$, and put the Puiseux exponent $r_b$ to be equal to infinity, as in loc. cit.  
\end{definition}

\begin{proposition} \label{asfb} Let $a$, $b$ be  irreducible germs of holomorphic curves at a point $C\in\cc^2$, 
let $b$ be non-linear. 
Let $f_a$, $f_b$ be the irreducible 
germs of holomorphic functions defining them: $g=\{ f_g=0\}$ for $g=a,b$.  
Set 
\begin{equation}\rho_a=\begin{cases} 1,\text{ if } a \text{ is transversal to } b \\
r_a, \text{  if } a \text{ is tangent to } b\end{cases}.\label{roa}\end{equation}
Let $(z,w)$ be affine coordinates centered at 
$C$ that are adapted to $b$. One has 
\begin{equation}f_a(u)=O((z(u))^{q_{a}\min\{\rho_a,r_b\}}), \text{ as } u\in b \text{ tends to } C. \label{fopa}\end{equation}
\end{proposition}

The proof of Proposition \ref{asfb} is based on the following property of Newton diagram of irreducible germs of analytic curves. 

 \begin{proposition} \label{newdiag} Let $b\subset\cp^2$ be a non-linear irreducible germ of analytic curve at a point $C$, 
 and let $(z,w)$ be local affine coordinates adapted to it. 
 Let $t\mapsto(t^q,ct^p(1+o(1)))$ be its local parametrization: $1\leq q<p$, $c\neq0$,  see (\ref{curve}).  
Let  $f$ be an irreducible germ of analytic function at $C$ defining $b$: $b=\{ f=0\}$.  The Newton diagram of the  function $f$ 
 consists of one edge: the segment connecting the points $(p,0)$ and $(0,q)$. More precisely, the Taylor series of the 
 function $f(z,w)$ contains only monomials $z^{\alpha}w^{\beta}$ such that 
 \begin{equation}\nu_{\alpha\beta}=q\alpha+p\beta\geq qp.\label{nuab}\end{equation}
  \end{proposition}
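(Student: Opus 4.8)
The plan is to present $f$, up to a unit, as a Weierstrass polynomial in $w$ whose roots over the field of Puiseux series are exactly the $q$ conjugates of the branch $b$, and then to read off the Newton diagram from elementary symmetric functions of those roots together with two intersection multiplicities.

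First I would observe that, since $b$ is tangent to the $z$-axis, the line $\{z=0\}$ is not a component of $b$, so $(b,\{z=0\})_C=\operatorname{ord}_t(z|_b)=q$ and hence $\operatorname{ord}_w f(0,w)=q$. Weierstrass preparation in $w$ then gives $f=u\cdot W$ with $u$ a unit and $W=w^q+c_{q-1}(z)w^{q-1}+\dots+c_0(z)$, $c_i(0)=0$; as multiplication by a unit leaves the Newton diagram unchanged, it suffices to treat $f=W$. Since $b$ is irreducible with primitive parametrization $z=t^q$, $w=\psi(t)$, $\psi(t)=ct^{p}(1+o(1))$ (see (\ref{curve}); $\psi$ holomorphic in $t$, $\operatorname{ord}_t\psi=p$), the polynomial $W$ splits over the Puiseux series field as $W=\prod_{\zeta^{q}=1}\bigl(w-\psi(\zeta z^{1/q})\bigr)$, so that $c_{q-m}(z)=(-1)^{m}e_{m}\bigl(\psi(\zeta z^{1/q}):\zeta^{q}=1\bigr)$ for $1\le m\le q$.

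Next I would estimate orders in $z$. Each root $\psi(\zeta z^{1/q})$ has $z$-order $p/q$, so $e_{m}$ is a sum of products of $m$ roots, each product of $z$-order exactly $mp/q$; hence $\operatorname{ord}_z e_m\ge mp/q$, and since $e_m$ is symmetric in the $q$ conjugates it is an ordinary power series in $z$, so $\operatorname{ord}_z c_{q-m}\ge mp/q$. Therefore every monomial $z^{\alpha}w^{q-m}$ of $c_{q-m}(z)w^{q-m}$ satisfies $q\alpha+p(q-m)\ge mp+p(q-m)=pq$, i.e. $\nu_{\alpha\beta}\ge qp$ for every monomial of $f$; moreover $\beta=q$ occurs only for the leading monomial $w^q$, which is the vertex $(0,q)$. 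For the other vertex, $(b,\{w=0\})_C=\operatorname{ord}_t(w|_b)=p$ yields $\operatorname{ord}_z c_0(z)=\operatorname{ord}_z f(z,0)=p$, so $z^{p}$ genuinely appears and, by the bound just proved, is the lowest monomial on the $\alpha$-axis. Thus the Newton diagram is exactly the edge joining $(p,0)$ and $(0,q)$.

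Finally, for the strict inequality at intermediate lattice points I would use coprimality of $p$ and $q$: for $1\le m\le q-1$ we have $q\nmid mp$, hence the integer $\operatorname{ord}_z c_{q-m}$ is at least $\lceil mp/q\rceil>mp/q$, and so $q\alpha+p(q-m)\ge q\lceil mp/q\rceil+p(q-m)>pq$ for every monomial of $c_{q-m}(z)w^{q-m}$; together with the two vertices this gives $\nu_{\alpha\beta}>qp$ except at $z^{p}$ and $w^{q}$. The one point that needs care is the factorization of $W$ into the $q$ conjugates $\psi(\zeta z^{1/q})$ with a single power series $\psi$: this is precisely where primitivity of the parametrization of a branch is used (it forces the Puiseux denominator to equal $q$), and once it is in hand the rest is routine bookkeeping with elementary symmetric functions and intersection multiplicities.
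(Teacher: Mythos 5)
Your proof is correct and takes essentially the same route as the paper's: reduce to a Weierstrass polynomial of degree $q$ in $w$, split it over the Puiseux field into the $q$ conjugates of the branch, and bound the $z$-orders of the elementary symmetric functions of the roots; your justification of the degree and of the vertex $(p,0)$ via the two intersection indices is a minor repackaging of what the paper obtains by direct computation with the product of the roots. The one genuine divergence is the strictness at interior lattice points. The paper derives it from cancellation: the leading coefficients $c\zeta^{p}$, $\zeta^{q}=1$, of the conjugate roots form a regular $q$-gon centered at $0$, so their elementary symmetric functions of order $s<q$ vanish and hence $h_s(z)=o(z^{\frac pq s})$. You derive it from integrality of $\operatorname{ord}_z c_{q-m}$ plus the arithmetic fact $q\nmid mp$. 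Be aware that both mechanisms silently assume $\gcd(p,q)=1$, which is not imposed in (\ref{curve}): a primitive parametrization can have $\gcd(p,q)>1$ (e.g. $t\mapsto(t^{4},t^{6}+t^{7})$, whose Weierstrass polynomial contains the monomial $-2z^{3}w^{2}$, so strictness fails at $(3,2)$, the midpoint of the edge). This affects only the final refinement: the non-strict inequality (\ref{nuab}), which is the only part of the proposition used later in the paper, is established by your valuation argument in full generality, whereas under a non-coprime pair $(p,q)$ the paper's $o(z^{\frac pq s})$ would likewise have to be weakened to $O(z^{\frac pq s})$. You should either state explicitly that you treat the coprime case for the strictness claim or drop that claim.
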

 
 \begin{proof} 
 Without loss of generality we will consider that $f$ is a Weierstrass polynomial: 
 \begin{equation}f(z,w)=\phi_{z}(w)=w^d+h_1(z)w^{d-1}+\dots+h_d(z), \ \ h_j(0)=0,\label{wei}\end{equation}
 since each germ of holomorphic function at 0 that vanishes at $0$ and 
 does not vanish identically on the $w$-axis is the product of a unique polynomial 
 as above (called Weierstrass polynomial) and a non-zero holomorphic function, by Weierstrass Preparatory Theorem 
 \cite[chapter 0, section 1]{grh}. For every $z$ small enough the polynomial $\phi_{z}(w)=f(z,w)$ has $q$ roots $\zeta_l(z)$,  
 $l=1,\dots,q$:  $\zeta_l(z)=ct_l^p(1+o(1))$, $t_l^q=z$, as $z\to0$; thus, 
 $\zeta_l(z)\simeq cz^{\frac pq}$. 
  This implies that the  Weierstrass polynomial (\ref{wei})  is  
 the product of $q$ factors $w-\zeta_l(z)$ with 
 $\zeta_l(z)\simeq cz^{\frac pq}$, as $z\to0$. 
Hence, in formula (\ref{wei}) one has $d=q$, 
$$h_q(z)=(-1)^q\prod_{l=1}^q\zeta_l(z)=(-1)^{q+p(q+1)}c^qz^p(1+o(1)).$$ 
The latter equality follows from the equality $\prod_{l=1}^qt_l=(-1)^{q+1}z$: the product of $q$-th roots of unity equals to 
$(-1)^{q+1}$. One has 
 \begin{equation} h_s(z)=O(z^{\frac pqs}) \text{ for } 1\leq s<q, \text{ as } z\to0,
 \label{sst}\end{equation}
  since $h_s(z)=(-1)^s\sigma_s$, where $\sigma_s$ is the 
  $s$-th elementary symmetric polynomial 
 in the roots $\zeta_l(z)\simeq cz^{\frac pq}$. 
Formula (\ref{sst}) implies that  the Taylor series of the Weierstrass polynomial (\ref{wei}) contains only the monomials $w^q$, $z^p$  and those monomials 
$z^{\alpha}w^{\beta}$ for which  $\beta< q$ (set $s=q-\beta$) and  
$\alpha\geq\frac pqs=\frac pq(q-\beta)$, i.e., $q\alpha+p\beta\geq pq$. This proves the proposition.
 \end{proof}

 \begin{proof} {\bf of Proposition \ref{asfb}.} 
  Case 1): the curve $a$ is transversal to $b$. Then $\rho_a=1<r=r_b=\frac{p_b}{q_b}$, and we have to show that 
 $f_a|_b=O(z^{q_{a}})$. To do this, let us take the coordinates $(z_a,w_a)$ adapted to $a$ so that 
 the $w_a$-axis coincides with the $z$-axis $T_Cb$, $w_a=z$ on $T_Cb$ and $z_a=w$: one can do this, by transversality. 
  One has  
  \begin{equation}w_a\simeq z, \ z_a=w\simeq c_bz^r \  \text{ along the curve } b.\label{rpq1}\end{equation}  
 Hence, each Taylor monomial $z_a^{\alpha}w_a^{\beta}$ of the function $f_a$ 
 has asymptotics $O(z^{\alpha r+\beta})$ along the curve $b$. Now it suffices to show that $\alpha r+\beta\geq q_{a}$. 
 Recall that $\alpha q_{a}+\beta p_{a}\geq p_{a}q_{a}$, by (\ref{nuab}). Dividing the latter inequality by $p_{a}$ 
 yields to  $\nu=\alpha r_{a}^{-1}+\beta\geq q_{a}$. Hence, $\alpha r+\beta \geq\nu\geq q_{a}$, since $r_{a}, r>1$. 
 This proves the proposition.
 
 Case 2): the curve $a$ is tangent to $b$, thus $\rho_a=r_{a}$. 
 Then the coordinates $(z,w)$ are adapted for both curves $b$ and $a$. 
 Each Taylor monomial $z^{\alpha}w^{\beta}$ of the function $f_a(z,w)$ is asymptotic to $cz^{\nu}$, $\nu=\alpha +\beta r$, $c=const$, 
 along the curve $b$,  since $w\simeq c_bz^r$. It suffices to show that $\alpha+\beta r\geq s=q_a\min\{ r_a, r\}$. 
 
 Subcase 2a): $r_{a}\leq r$. Thus,  $s=q_ar_a=p_a$. One has 
 $\alpha +\beta r\geq \alpha +\beta r_{a}\geq p_{a}=s$, by inequality (\ref{nuab}) 
 divided by $q$.
 
 Subcase 2b): $r_{a}>r$. Thus, $\min\{\rho_a, r\}=r$, $s=q_ar$, 
 $$\frac{r_{a}}r(\alpha +\beta r)=\alpha\frac{r_{a}}r+\beta r_{a}\geq \alpha+\beta r_{a}\geq p_{a}=q_{a}r_{a},$$
 by (\ref{nuab}). Multiplying the latter inequality by $\frac r{r_{a}}$ yields to $\alpha+\beta r\geq q_{a}r=s$. Proposition  
 \ref{asfb} is proved.
 \end{proof}

\subsection{Asymptotics of Hessian of local defining function}

\begin{proposition} \label{propfhp} Let $b\subset\cp^2$ be a nonlinear irreducible germ of analytic curve at a point $C$. 
Let $f$ be the irreducible germ of its defining function, $b=\{ f=0\}$, and let $H(f)$ be its Hessian defined in (\ref{hess}) 
in some affine chart $\cc^2_{(x,y)}$ containing $C$. 
Let $(z,w)$ be an affine chart on $\cp^2$ centered at $C$ 
that is adapted to $b$: the projective line $T_Cb$ is the $z$-axis. Then 
\begin{equation} H(f)(u)=O((z(u))^{3q_br-2(r+1)}), \ r=r_b, \text{ as } u\in b \text{ tends to } C.\label{fhp}\end{equation}
\end{proposition} 

\begin{proof} 
Everywhere below by $\nabla_{skew}f=(\frac{\partial f}{\partial w}, -\frac{\partial f}{\partial z})$ 
 we denote the skew gradient with respect to the standard symplectic form $dz\wedge dw$ 
 in the coordinates $(z,w)$. It is obtained from the previous skew gradient taken with respect to the symplectic form 
 $dx\wedge dy$ by multiplication by the ratio of the above symplectic forms: the 
 Jacobian of the coordinate change $(z,w)\mapsto(x,y)$. 
  For every $u\in b$ let $L_u\subset\cc^2$ denote the affine line tangent to $b$ 
at $u$, and let $v$ denote the extension 
of the vector $\nabla_{skew}f(u)\in T_ub=T_uL_u$ to a constant vector field on $L_u$. It 
 suffices to prove formula (\ref{fhp}) for its left-hand side replaced by the  derivative $\frac{d^2f}{dv^2}(u)$: for $u\in b$ 
 the ratio of the absolute values of the latter second derivative and the expression 
 $H(f)(u)$ 
 equals to the squared modulus of the above Jacobian, which is a non-zero holomorphic function on a neighborhood of the 
 base point $C$.

 \def\wab{z^{\alpha}w^{\beta}}
 We  evaluate the Hessian 
 quadratic form of each  Taylor 
 monomial of the function $f$ on  $\nabla_{skew}f(u)$. We show that the expression thus obtained 
 has asymptotics given by the right-hand side in (\ref{fhp}). This will prove the proposition. 
 
 Let $z^{\alpha}w^{\beta}$ be the Taylor monomials of the function $f$. 
 The skew gradient $(\nabla_{skew}f)|_b$ is a linear combination of the vector monomials 
 $$h_{\alpha,\beta}=\wt h_{\alpha,\beta}\frac{\partial}{\partial w}, \ \wt h_{\alpha,\beta}=z^{\alpha-1}w^{\beta}
 \simeq 
 cz^{\alpha +\beta r-1},$$ 
 $$v_{\alpha,\beta}=\wt v_{\alpha,\beta}\frac{\partial}{\partial z}, \ \wt v_{\alpha,\beta}=
 z^{\alpha}w^{\beta-1}\simeq c'z^{\alpha +\beta r-r},  \ c,c'\neq0;$$
both above 
asymptotics are written along the curve $b$. The restrictions to the curve $b$ of the second derivatives of a monomial 
$z^{\alpha}w^{\beta}$ are asymptotic  to 
 $$\frac{\partial^2(\wab)}{\partial w^2}=\beta(\beta-1)z^{\alpha}w^{\beta-2}=O(z^{\alpha+\beta r-2r});$$
 $$\frac{\partial^2(\wab)}{\partial z^2}=\alpha(\alpha-1)z^{\alpha-2}w^{\beta}= O( z^{\alpha +\beta r-2});$$
 $$ \frac{\partial^2(\wab)}{\partial z\partial w}=\alpha\beta z^{\alpha-1}w^{\beta-1}=O(z^{\alpha +\beta r-r-1}).$$  
 Therefore, applying the Hessian quadratic form of each monomial $\wab$ to a linear combination of the vectors  $h_{\alpha',\beta'}$ 
 and $v_{\alpha',\beta'}$ 
 yields to a linear combination  of expressions of the three following types: 
$$\frac{\partial^2(\wab)}{\partial w^2}\wt h_{\alpha',\beta'}\wt h_{\alpha'',\beta''}=O(z^{\nu}), \ \nu=(\alpha'+\beta' r-1)+ 
(\alpha''+\beta'' r-1)$$
 \begin{equation}+\alpha +\beta r-2r=(\alpha'+\beta'r)+(\alpha''+\beta''r)+(\alpha +\beta r)-2(r+1);\label{fornu}\end{equation}
 $$\frac{\partial^2(\wab)}{\partial z^2}\wt v_{\alpha',\beta'}\wt v_{\alpha'',\beta''}=O(z^{\nu_2}), \ \nu_2=(\alpha'+\beta' r)
 +(\alpha''+\beta'' r)-2r+\alpha +\beta r-2=\nu;$$
 $$\frac{\partial^2(\wab)}{\partial z\partial w}\wt h_{\alpha',\beta'}\wt v_{\alpha'',\beta''}=O(z^{\nu_3}), \ \nu_3=(\alpha'+\beta' r)+
 (\alpha''+\beta'' r)
 +\alpha +\beta r-2r-2=\nu.$$
 Let us now estimate $\nu$ from below. Recall that for every Taylor monomial $\wab$ of the function $f$ one has 
 $$\alpha +\beta r=\frac1{q_b}(\alpha q_b+\beta p_b)\geq p_b=q_br,$$ 
 by (\ref{nuab}), and hence, the same inequality holds for $(\alpha',\beta')$ and $(\alpha'',\beta'')$. 
 This together with formula (\ref{fornu}) for the 
 number $\nu$ implies that $\nu\geq 3q_br-2(r+1)$. This together with the above discussion proves formula (\ref{fhp}). 
 \end{proof}
 
\subsection{ Asymptotics of Bialy--Mironov Formula} 

Everywhere below in this subsection 
$C\in\gamma\cap\ii$ is a regular point of the conic $\ii$, and $b$ is a local branch of the curve 
$\gamma$ at $C$ that is transversal to $\ii$; $(z,w)$ are affine coordinates centered at 
$C$ and adapted to $b$. Recall that $\Delta$ is the zero divisor of 
the function $G$, it coincides with the zero divisor of the polynomial $F_1$, and 
$\deg F_1=\deg(\Delta)=2n$, see (\ref{deldeg}). 

\begin{proposition} The right-hand side in (\ref{hesf}) has the following asymptotics, as $u=(x,y)\in b$ tends to $C$:  
\begin{equation} (\mcq(u))^{3m-3}\simeq c(z(u))^{3m-3}, \ c\neq0, \ m=\frac nk=\frac1{2k} \deg(\Delta).\label{righ}\end{equation}
\end{proposition}
\begin{proof}  The degree equality in (\ref{righ}) follows from definition. The restriction to $T_Cb$ of the differential 
$d\mcq(C)$ does not vanish, since $C$ is a regular point of the 
conic $\ii=\{\mcq=0\}$ and $b$ is transversal to $\ii$. Recall that the tangent line $T_Cb$ is the  $z$-axis. 
Therefore,  $\mcq(u)|_b\simeq cz(u)$, $c\neq0$, as $u\to C$. 
This implies the asymptotic formula in (\ref{righ}).
\end{proof}

Let $\sum_{j=1}^ls_jb_j$ denote the germ at $C$ of the 
divisor $\Delta$. Here $s_j\in\nn$, 
and $b_j$ are distinct irreducible germs of analytic curves in $\Delta$ at $C$ 
numerated so that $b_1=b$; thus, $s_1=k$. 
For $j=1,\dots,l$ let $f_j$ denote 
the germ at $C$ of defining function of the curve $b_j$. Set 
$$k_j=\frac{s_j}k, \ \wt g=\prod_{j=2}^lf_j^{k_j}; \ \ \ k_1=\frac{s_1}k=1; \ \ k_j=1 \text{ whenever } b_j\subset\gamma,$$
by definition. Let $F$ be the same, as in (\ref{gff}). 
\begin{proposition} Set $r=r_b$. As $u\in b$ tends to $C$, one has 
$$H(F)(u)\simeq c_1\wt g^3H(f_1)(u)=O((z(u))^{\eta}), \ c_1\neq0,$$ 
\begin{equation} \eta=\eta(b)=3\sum_{j=1}^lk_jq_{b_j}\min\{\rho_{b_j},r\}-2(r+1). 
\label{nuprec}\end{equation}
Here $\rho_{b_j}$ are the same, as in (\ref{roa}); $\rho_{b_1}=\rho_b=r$. 
\end{proposition}
\begin{proof} We use \cite[formula (17)]{bm2} valid for every two functions $f_1$ and 
$\beta$: 
\begin{equation} H(f_1(x,y)\beta(x,y))|_{\{ f_1=0\}}=\beta^3(x,y)H(f_1(x,y)).
\label{gradm}\end{equation}
One has 
 \begin{equation}  F(x,y)=h(x,y) f_1(x,y)\wt g(x,y),\label{start}\end{equation}
where $h$ is a germ of holomorphic function at $C$, $h(C)\neq0$. 
 Formula (\ref{start}) follows from definition, see (\ref{gff}). This together with (\ref{gradm}) implies that 
\begin{equation*}H(F)(u)\simeq c_1(\wt g^3H(f_1))(u)=c_1(H(f_1)\prod_{j=2}^lf_j^{3k_j})(u), \ c_1=(h(C))^3\neq0.
\end{equation*}
Substituting formula (\ref{fopa}) with $a=b_j$ and (\ref{fhp}) to the above right-hand side 
 yields to (\ref{nuprec}), taking into account that $k_1=1$ and $\rho_{b_1}=\rho_b=r$.
\end{proof}

\begin{corollary} For every local branch $b$ as at the beginning of the subsection the corresponding exponent 
$\eta=\eta(b)$ satisfies the inequality 
\begin{equation}\eta=3\sum_{j=1}^lk_jq_{b_j}\min\{\rho_{b_j},r\}-2(r+1)\leq3m-3=3\frac{\deg(\Delta)}{2k}-3.
\label{ineta}\end{equation}
\end{corollary}
\begin{proof} If the contrary inequality were true, then the left-hand side in (\ref{hesf}) would 
be asymptotically dominated by the right-hand side along the branch $b$. This follows from formulas (\ref{righ}) and (\ref{nuprec}). 
Thus obtained contradiction to formula (\ref{hesf}) proves the corollary.
\end{proof}

 \section{Local branches and relative $\ii$-angular symmetry property}
 In this section we prove the following theorem.
 \begin{theorem} \label{iii} 
 Let $\ii\subset\cp^2$ be a conic (either regular, or a pair of distinct lines). Let 
 $\gamma\subset\cp^2$ be an irreducible algebraic curve different from a line and from $\ii$ that 
generates a rationally integrable $\ii$-angular billiard. Then every intersection point $C\in\gamma\cap\ii$ satisfies the 
following statements: 

(i) Case, when $\ii$ is a union of two distinct lines through $C$. Let $b$ be a local branch of the curve $\gamma$ 
at $C$ that is transversal to both lines forming $\ii$. Then $b$ is quadratic.

(ii) Case, when $C$ is a regular point of the conic $\ii$. Then 

(ii-a) each local branch of the curve $\gamma$ at $C$ that is tangent  to $\ii$ is quadratic;

(ii-b) each its branch at $C$ that is transversal to $\ii$ is regular and quadratic.
\end{theorem}

In our assumptions for every $u\in\gamma$ the restriction to $T_u\gamma$ of the rational function $G$ 
is invariant under the $\ii$-angular symmetry with center $u$, and  $\gamma\subset\Gamma=\{ G=0\}$. This implies 
that the following {\it relative projective symmetry property} takes place: {\it for every $u\in\gamma$ 
 the intersection of the projective tangent line $T_u\gamma$ with a bigger algebraic curve $\Gamma\supset\gamma$ 
 (or a divisor)  is invariant under a projective involution $T_u\gamma\to T_u\gamma$ 
 fixing $u$:} the $\ii$-angular symmetry in our case.  
 
 In Subsection 4.4 we state  and prove 
 Theorem \ref{tquad}, which unifies and generalizes statements (i) and (ii-a) of Theorem \ref{iii}, and deduce 
 statements (i) and (ii-a). 
 Theorem \ref{tquad} is stated  for a nonlinear germ of analytic curve $b$ at $C\in\cp^2$ (that 
 needs not be algebraic) that has    
 local relative projective symmetry property with respect to a bigger finite collection $\Gamma$ of irreducible 
 germs of analytic curves at points 
 in $T_Cb$ (called a local multigerm) and projective involutions $T_ub\to T_ub$ fixing $u$ 
  with appropriate asymptotics, as $u\to C$. The formal definitions of a local multigerm 
  and the latter local symmetry property  will be given 
  in Subsections 4.1 and  4.3 respectively. 
  
  For  the proof of Theorem \ref{iii} we first describe   those points 
  of intersection $T_ub\cap\Gamma$,  whose $z$- ($w$-) coordinates in the chart $(z,w)$ adapted to $b$ 
  have asymptotics linear, sublinear and superlinear in $z(u)$ (respectively, $w(u)$), as $u\in b$ tends to $C$. 
  Their description, which mostly follows from results of \cite{alg, gs}, is presented in  Subsection 4.1.  
Then in Subsection 4.3 we show that for every local branch $b$ as in 
Theorem \ref{iii} the $\ii$-angular symmetries of 
the tangent lines $T_ub$ written in appropriate affine coordinate form families of degenerating conformal involutions 
of two possible asymptotic types A or B. The latter families of involutions 
are introduced  in Subsection 4.2, where we  
prove general Propositions \ref{invdeg} and \ref{invdeg2} on their asymptotics. 
In Subsection 4.4 we show that the collection (divisor) of asymptotic factors of points of the intersection $T_ub\cap\Gamma$ with linear asymptotics in $z(u)$ 
($w(u)$) is symmetric with respect to appropriate conformal involution $\oc\to\oc$,  
and then deduce Theorem \ref{tquad}. 

The proof of statement (ii-b) takes the rest of the section: Subsections 4.5--4.8. First in Subsection 4.5 we prove 
subquadraticity of the branch $b$ under question. In Subsection 4.6 we prove that every local branch  of the 
curve $\Gamma$ that is tangent to $b$ (if any) has Puiseux exponent no greater than $r_b$. In Subsection 4.7 
we deal with the zero divisor $\wt\Delta=\frac 1k\Delta$ of the function $F_1^{\frac1k}$, whose germ at $C$ contains $b$ 
with multiplicity 1.  We prove that its local intersection index 
 with the tangent line to $b$ at its base point $C$ is  
no less than its half-degree plus 1, and this inequality is strict, unless the germ $b$ is regular and quadratic. 
The above-mentioned Puiseux exponent and intersection  index inequalities 
will be proved in a general situation, for a germ $b$ having  local projective symmetry property, with 
the projective symmetries forming a family of involutions of type A in the adapted coordinate $z$. 

Afterwards in Subsection 4.8 we prove statement (ii-b). Namely, we show that the above-mentioned 
Puiseux exponent and  intersection index inequalities  
together would bring a contradiction to upper bound (\ref{ineta}) of the exponent 
$\eta$ in the asymptotics of Bialy--Mironov formula, unless the germ $b$ is regular and quadratic. This will finish the proof of Theorem \ref{iii}. 

\subsection{Local multigerms and asymptotics of intersections with tangent line} 

Let $a$, $b$ be irreducible germs of planar complex analytic curves at the origin in $\cc^2$. Let $p_g$, $q_g$, $c_g$, 
$g=a,b$ be respectively the corresponding exponents and constants from their parametrizations (\ref{curve}) in 
their adapted coordinates. 
Let $t$ be the corresponding local parameter  of the germ $b$.  
 We identify points of the curve $b$ with the corresponding local parameter values $t$. 
We use the following statements on 
 the asymptotics of the points of intersection $T_tb\cap a$.

\begin{proposition} \label{pasym}\cite[proposition 3.8]{gs} Let $a$, $b$ be transversal irreducible germs of holomorphic curves at the origin in $\cc^2$, and let $b$ be nonlinear. Let $(z,w)$ be affine coordinates centered at 0 and adapted to $b$: the germ 
$b$ is tangent to the $z$-axis. 
Then for every $t$ small enough the intersection $T_tb\cap a$  consists 
of $q_a$ points $\xi_1,\dots,\xi_{q_a}$ whose coordinates have the following asymptotics, as $t\to0$:
$$z(\xi_j)=O(t^{p_b})=O(w(t))=o(z(t))=o(t^{q_b}),$$
\begin{equation} w(\xi_j)=(1-r_b)w(t)(1+o(1))=(1-r_b)c_bt^{p_b}(1+o(1)).\label{zwb}\end{equation}
(Recall that $q_a=1$, if $a$ is a germ of line.)
\end{proposition}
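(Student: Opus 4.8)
The plan is to reduce the statement to an explicit Newton--Puiseux computation on the tangent line $T_tb$. Working in the adapted chart, parametrize $b$ by $t\mapsto(z_b(t),w_b(t))$ with $z_b(t)=t^{q_b}$ and $w_b(t)=c_bt^{p_b}(1+o(1))$ (honest analytic functions of $t$). First I would write down the tangent line: its slope is $\frac{w_b'(t)}{z_b'(t)}=c_br_bt^{p_b-q_b}(1+o(1))$, which tends to $0$ as $t\to0$ (so $T_tb$ approaches $T_0b=\{w=0\}$), and passing through $(z_b(t),w_b(t))$ one obtains
$$T_tb=\Bigl\{\,w=c_br_bt^{p_b-q_b}(1+o(1))\,z+(1-r_b)c_bt^{p_b}(1+o(1))\,\Bigr\},$$
where the intercept is computed by substituting $z=z_b(t)$, $w=w_b(t)$ and using $r_b\neq1$ to combine the error terms. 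Note that the intercept is already $(1-r_b)w_b(t)(1+o(1))$, the quantity in the conclusion.

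Since $a$ is transverse to $b$, the tangent $T_0a$ is not the $z$-axis, so after the linear change $(z,w)\mapsto(z-\lambda_0 w,w)$ — which fixes the $z$-axis, hence keeps the chart adapted to $b$, and under which the asserted asymptotics are unchanged — I may assume $T_0a=\{z=0\}$. Then $a$ admits a parametrization $\tau\mapsto(z_a(\tau),w_a(\tau))$ with $w_a(\tau)=\tau^{q_a}$ and $z_a(\tau)=c_a\tau^{p_a}(1+o(1))$, $1\le q_a<p_a$, $c_a\neq0$. Substituting into the equation of $T_tb$ yields
$$\tau^{q_a}=(1-r_b)c_bt^{p_b}(1+o(1))+O\!\bigl(t^{p_b-q_b}\tau^{p_a}\bigr).$$

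I would then treat this equation by the standard weighted rescaling $\tau=t^{p_b/q_a}\sigma$, made single valued by putting $t=s^{q_a}$; it becomes $\sigma^{q_a}=(1-r_b)c_b(1+o(1))+O\!\bigl(s^{\,p_ap_b-q_aq_b}\sigma^{p_a}\bigr)$. Since $p_ap_b>q_aq_b$, the error term tends to $0$ uniformly on a fixed circle $|\sigma|=R$ chosen to enclose the $q_a$-th roots of $(1-r_b)c_b$; hence, by Rouch\'e/Hurwitz, for $s$ small the left side has exactly $q_a$ zeros $\sigma_j(s)$ inside, tending to the $q_a$ distinct $q_a$-th roots of $(1-r_b)c_b\neq0$. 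The corresponding points $\xi_j=(z_a(\tau_j),w_a(\tau_j))$, $\tau_j=s^{p_b}\sigma_j$, are thus distinct, and by properness (for $t$ small the intersection $T_tb\cap a$ near $C$ has total intersection multiplicity $q_a$, since $T_0b=\{w=0\}$ meets $a$ only at $C$ with multiplicity $q_a$) they exhaust $T_tb\cap a$ near $C$. Finally $w(\xi_j)=\tau_j^{q_a}=(1-r_b)c_bt^{p_b}(1+o(1))=(1-r_b)w_b(t)(1+o(1))$, and $z(\xi_j)=c_a\tau_j^{p_a}(1+o(1))=O(t^{p_ap_b/q_a})=O(t^{p_b})$ because $p_a/q_a>1$; together with $w_b(t)\asymp t^{p_b}$ and $p_b>q_b$ this gives the whole chain $z(\xi_j)=O(t^{p_b})=O(w(t))=o(z(t))=o(t^{q_b})$.

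The only step requiring genuine care is making the count and the uniform asymptotics rigorous — carrying out the Rouch\'e estimate after the weighted blow-up uniformly in the parameter, checking that the error term is genuinely subdominant (this is exactly where the inequalities $p_a>q_a$ and $p_b>q_b$ enter), and verifying non-collision of the $q_a$ solutions. This is the Newton-polygon bookkeeping of \cite[proposition 2.1]{gs}; the remainder is the elementary tangent-line computation above.
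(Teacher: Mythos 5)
The paper itself gives no proof of Proposition \ref{pasym}: it is quoted verbatim from \cite[proposition 2.1]{gs}, so there is nothing internal to compare against. Your argument --- the tangent-line equation with slope $c_br_bt^{p_b-q_b}(1+o(1))$ and intercept $(1-r_b)c_bt^{p_b}(1+o(1))$, the shear normalizing $T_0a$ to the $w$-axis, the weighted rescaling $\tau=t^{p_b/q_a}\sigma$ with $t=s^{q_a}$, and the Rouch\'e count against the conserved intersection number $q_a$ --- is correct and is essentially the standard Newton--Puiseux proof one finds in the cited source; the only detail worth flagging is that the shear perturbs the normal form $z=t^{q_b}$ of $b$ into $t^{q_b}(1+o(1))$, which must be (and easily is) restored by a reparametrization $t\mapsto t(1+o(1))$ leaving $q_b$, $p_b$, $c_b$ unchanged.
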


\begin{proposition}  \label{asym} (\cite[p. 268, proposition 2.50]{alg}, \cite[proposition 3.10]{gs}) Let $a$, $b$ be  
irreducible tangent germs of holomorphic curves at the origin $O$ in the plane $\cc^2$, and let $b$ be nonlinear. 
Consider their parametrizations (\ref{curve})
in common adapted coordinates $(z,w)$. Let $c_a$ and $c_b$ be the corresponding constants from (\ref{curve}). 
Then for every $t$ small enough the intersection $T_tb\cap a$  consists 
of $p_a$ points $\xi_1,\dots,\xi_{p_a}$ (or just one point $\xi_1$, if $a$ is the germ of the line $T_Ob$) 
whose coordinates have the following asymptotics, as $t\to0$.

Case 1): $r_a>r_b$ (including the case, when $a$ is linear, i.e., $r_a=\infty$).  One has two types of intersection points $\xi_j$:
\begin{equation}\text{for } j\leq q_a: \ \ \ z(\xi_j)=\frac{r_b-1}{r_b}z(t)(1+o(1))=\frac{r_b-1}{r_b}t^{q_b}(1+o(1)),\ \ \label{a>b1}\end{equation} 
$$w(\xi_j)=O(t^{q_br_a})= o(t^{p_b})=o(w(t)); $$
\begin{equation}\text{ for } j>q_a: \ \ \ \ \ \ \ \ \ \ \ \  \ z(t)=O((z(\xi_j))^{\frac{r_a-1}{r_b-1}})=o(z(\xi_j)),\ \ \ \ \ \ \ \ \ \ \ \  \ \ \label{a>b2}\end{equation} 
$$w(t)=O(z^{r_b}(t))=O((z(\xi_j))^{\frac{r_b(r_a-1)}{r_b-1}})=o(z^{r_a}(\xi_j))=o(w(\xi_j)).$$
(Points satisfying (\ref{a>b2}) exist  if and only if $a$ is non-linear.) 

Case 2): $r_a=r_b=r$. One has 
\begin{equation} z(\xi_j)=\zeta_j^{q_a}z(t)(1+o(1))= \zeta_j^{q_a}t^{q_b}(1+o(1)),\label{a=b}\end{equation} 
$$w(\xi_j)=c_a\zeta_j^{p_a}t^{p_b}(1+o(1))=
c\zeta_j^{p_a}w(t)(1+o(1)),$$
where $\zeta_j$ are the roots of the polynomial 
\begin{equation}R_{p_a,q_a,c}(\zeta)=c\zeta^{p_a}-r\zeta^{q_a}+r-1; \ r=\frac{p_a}{q_a}, \ c=\frac{c_a}{c_b}.
\label{rpq}\end{equation}
(In the case, when $b=a$, one has $c=1$, and the above polynomial has double root 1 corresponding to the 
tangency  point $t$.) 

Case 3): $r_a<r_b$. One has 
\begin{equation} z(\xi_j)=O((z(t))^{\frac{r_b}{r_a}})=o(z(t)), \label{a<b}\end{equation} 
$$w(\xi_j)=(1-r_b)w(t)(1+o(1))=(1-r_b)c_bt^{p_b}(1+o(1)).$$
\end{proposition}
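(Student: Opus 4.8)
The plan is to reduce Proposition \ref{asym} to a Newton--Puiseux analysis of a single analytic equation in two variables. First I would write the moving tangent line explicitly: taking the parametrization (\ref{curve}) $\gamma_b(t)=(t^{q_b},c_bt^{p_b}(1+o(1)))$ of $b$ (with the first coordinate normalized to $t^{q_b}$) and eliminating $\lambda$ from the affine line $\gamma_b(t)+\lambda\dot\gamma_b(t)$, one computes $\dot w(t)/\dot z(t)=c_br_bt^{p_b-q_b}(1+o(1))$ and
$$T_tb=\bigl\{\,w=c_br_bt^{p_b-q_b}(1+o(1))\,z+c_b(1-r_b)t^{p_b}(1+o(1))\,\bigr\},\qquad r_b=p_b/q_b>1;$$
this already yields Proposition \ref{pasym} in the transverse case. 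Substituting the parametrization $(z,w)=(s^{q_a},c_as^{p_a}(1+o(1)))$ of $a$ turns the incidence $a\cap T_tb$ into one equation $\Phi(s,t)=0$, analytic near the origin, whose three governing monomials sit at the Newton-diagram points $(p_a,0)$, $(q_a,p_b-q_b)$, $(0,p_b)$ with nonzero coefficients $c_a$, $c_br_b$, $c_b(1-r_b)$. By the Weierstrass preparation theorem in the variable $s$, the equation $\Phi(s,t)=0$ has exactly $p_a$ small roots $s_1(t),\dots,s_{p_a}(t)$, counted with multiplicity, each a Puiseux series in $t$; these are the points $\xi_1,\dots,\xi_{p_a}$ of the statement.

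Then I would carry out the Newton-polygon case analysis, observing that the middle vertex $(q_a,p_b-q_b)$ lies below, on, or above the segment joining $(0,p_b)$ to $(p_a,0)$ exactly according as $r_a>r_b$, $r_a=r_b$, or $r_a<r_b$. In the balanced case $r_a=r_b=r$ the Newton polygon is a single edge carrying all three monomials; substituting $s=\zeta\,t^{q_b/q_a}$ makes each monomial of the common $t$-order $t^{p_b}$, and after dividing by $c_bt^{p_b}$ and letting $t\to0$ the leading equation becomes exactly $c\zeta^{p_a}-r\zeta^{q_a}+(r-1)=R_{p_a,q_a,c}(\zeta)=0$ with $c=c_a/c_b$, giving (\ref{a=b}) and (\ref{rpq}); when $a=b$ one has $c=1$ and $\zeta=1$ is a double root (since $p-rq=0$), accounting for the tangency point $t\in T_tb\cap b$. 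When $r_a>r_b$ the polygon has two edges, hence two scales: along the edge $[(0,p_b),(q_a,p_b-q_b)]$ the monomial $c_as^{p_a}$ is negligible and the leading balance $c_br_bt^{p_b-q_b}s^{q_a}+c_b(1-r_b)t^{p_b}\approx0$ gives $z(\xi_j)=s^{q_a}\simeq\frac{r_b-1}{r_b}z(t)$ for the $q_a$ points of the first type (\ref{a>b1}), while along the edge $[(q_a,p_b-q_b),(p_a,0)]$ the leading balance $c_as^{p_a}\simeq c_br_bt^{p_b-q_b}s^{q_a}$, i.e.\ $s^{p_a-q_a}\simeq\frac{c_br_b}{c_a}t^{p_b-q_b}$, produces by a power count the scaling (\ref{a>b2}) for the remaining $p_a-q_a$ points. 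When $r_a<r_b$ the middle vertex is irrelevant, the polygon is the single edge $[(0,p_b),(p_a,0)]$, and all $p_a$ roots satisfy the leading balance $c_as^{p_a}\simeq c_b(1-r_b)t^{p_b}$, which gives $w(\xi_j)=(1-r_b)w(t)(1+o(1))$ and $z(\xi_j)=s^{q_a}=O(z(t)^{r_b/r_a})=o(z(t))$, i.e.\ (\ref{a<b}). The $o(\cdot)$ comparisons asserted in each case then follow by routine arithmetic with the exponents, using only $r_a,r_b>1$.

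The step I expect to be the main obstacle — and the reason this is a genuine lemma rather than a one-line computation — is upgrading the formal dominant-balance bookkeeping (in which the several $(1+o(1))$ factors depend on both $s$ and $t$) into honest asymptotics. I would do this by the usual Newton--Puiseux machinery: every edge of the Newton polygon with rational slope, together with every factorization of its associated quasihomogeneous leading polynomial into parts belonging to distinct nonzero roots, yields actual Puiseux-series solution branches of $\Phi(s,t)=0$; near a simple root of the leading polynomial this reduces to the holomorphic implicit function theorem after the substitution $s=\zeta t^{\alpha}$, and a Rouché argument applied to $\Phi(\cdot,t)$ for fixed small $t$ confirms that the total count is $p_a$ and that no root escapes. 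The one genuinely delicate point is the double root $\zeta=1$ of $R_{p,q,1}$ when $a=b$, which must be resolved by one further blow-up and identified with the doubled tangency point; all of this is exactly the content of \cite[proposition 2.50]{alg} and \cite[proposition 2.3]{gs}, to which I would refer for the remaining details.
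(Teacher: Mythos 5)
The paper gives no proof of this proposition: it is imported verbatim from \cite[proposition 2.50]{alg} and \cite[proposition 2.3]{gs}, so there is nothing internal to compare against. Your Newton--Puiseux argument is correct (the line equation, the three governing monomials at $(p_a,0)$, $(q_a,p_b-q_b)$, $(0,p_b)$, the trichotomy of the middle vertex relative to the segment according to the sign of $r_a-r_b$, and the edge-by-edge dominant balances all check out) and it is essentially the same asymptotic substitution-and-balance route taken in the cited sources.
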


\begin{definition} \label{deflm} \cite[definition 3.3]{gs} Let $L\subset\cp^2$ be a line, and let $C\in L$. 
A {\it $(L,C)$-local multigerm (divisor)} is respectively a finite union (linear combination $\sum_jk_jb_j$ with 
$k_j\in\rr\setminus\{0\}$) 
of distinct irreducible germs of analytic curves $b_j$ (called {\it components}) at base points  $C_j\in L$ 
such that each germ at  $C_j\neq C$ is different from the line $L$. (A germ at $C$ can be arbitrary, 
in particular, it may coincide with the germ $(L,C)$.)  
The {\it $(L,C)$-localization} of an algebraic curve (divisor) in $\cp^2$ is the corresponding $(L,C)$-local 
multigerm (divisor) formed by all its local branches of the above type.
\end{definition}

Everywhere below in the present subsection $b$ is a nonlinear irreducible germ of analytic curve at a point $C\in\cp^2$, 
$\Gamma$ is a $(T_Cb,C)$-local multigerm (or divisor), and $(z,w)$ is a local affine chart centered at $C$ that 
is adapted to $b$: $T_Cb$ is the $z$-axis. For every affine coordinate $h$, which will be either $z$, or $w$, we consider its 
restriction to the projective lines $T_ub$. 

\begin{definition} Let $h$ be an affine coordinate  on a neighborhood of the point $C$ in $\cp^2$ 
centered at $C$: $h(C)=0$. 
 {\it The points of intersection} $\Gamma\cap T_ub$ {\it with linear $h$-asymptotics} 
are those intersection points whose $h$-coordinates have asymptotics $\tau_j h(u)(1+o(1))$, $\tau_j\neq0$, as $u\to C$; the 
corresponding constant 
factors $\tau_j$ are called the {\it asymptotic $h$-factors.} In the case, when $\Gamma$ is a divisor, we take each factor $\tau_j$ with 
multiplicity, which is the total multiplicity $n_j$ of all the intersection points with the same asymptotic factor $\tau_j$. The formal 
linear combination $M_h=\sum_jn_j[\tau_j]$, which is a divisor in $\cc^*$, will be called the {\it asymptotic $h$-divisor.} \end{definition}

\begin{definition} We say that a continuous family of points $Q=Q(u)$ of intersection 
$T_ub\cap\Gamma$ has {\it sublinear (superlinear) $h$-asymptotics,} if $h(Q(u))=o(h(u))$ (respectively, 
if $h(u)=o(h(Q(u)))$), as $u\to C$. 
\end{definition} 

\begin{remark} In general, the function $h(Q(u))$ can be multivalued. It can be always 
written as a Puiseux series in $z(u)$  (after multiplication by a power $z^s(u)$, $s\in\mathbb Q_{>0}$, 
if $h(Q(u))\to\infty$, as $u\to C$). The above notions of family of points with sublinear, 
linear and superlinear $h$-asymptotics and the asymptotic factors are well-defined in this 
general case. For every given affine coordinate $h$   on a neighborhood of the point $C$ in $\cp^2$ with 
$h(C)=0$ each (multivalued) continuous family of 
intersection points $Q(u)$ has one of the three above types. 
\end{remark}

In what follows for a multigerm (divisor) $\Gamma$ by $\Gamma(C)$ we will denote its part consisting of the irreducible 
germs based at $C$. 
Recall that for every irreducible germ $a$ in $\Gamma(C)$  we define the number $\rho_a$ by formula 
(\ref{roa}): $\rho_a=1$, if $a$ is transversal to $b$; $\rho_a=r_a$, if $a$ is tangent to $b$. Set 
\begin{equation}\Gamma_{\rho<r_b}=\text{ the collection (divisor) of germs } a \text{ in } \Gamma(C) 
\text{ with } \rho_a<r_b,\label{ga-b}\end{equation}
\begin{equation} \Gamma_{\rho>r_b}=\text{ the collection (divisor) of germs } a \text{ in } \Gamma(C)  
\text{ with } \rho_a>r_b,\label{ga+b}\end{equation}
\begin{equation}\Gamma_{\rho=r_b}=\text{ the collection (divisor) of germs } a \text{ in } \Gamma(C)  
\text{ with } \rho_a=r_b,\label{ga=b}\end{equation}
\begin{equation}\Gamma_{out}=\Gamma\setminus\Gamma(C), \text{ which consists of germs   that are 
not based at } C.\label{ga>>}\end{equation}

Thus, $\Gamma_{\rho<r_b}$ consists of exactly those germs $a$ in $\Gamma$ that 
are based at $C$, and such that 

-  either $a$ is transversal  to $b$,

- or $a$ is tangent to $b$ and $r_a<r_b$.

All the germs in $\Gamma_{\rho>r_b}$ and $\Gamma_{\rho=r_b}$ are tangent to $b$.

\begin{proposition} \label{propasy} 1) The points of intersection $T_ub\cap\Gamma$ with sublinear $z$-asymptotics are 
exactly the points of intersection of the line $T_ub$ with $\Gamma_{\rho<r_b}$. 

2) If $\Gamma_{\rho>r_b}\neq\emptyset$, then  
$T_ub\cap\Gamma_{\rho>r_b}$ is split into two  parts, 
\begin{equation}T_ub\cap\Gamma_{\rho>r_b}=\mcl_u^<\sqcup \mcl_u^>, \ \mcl_u^<\neq\emptyset:\label{tusplit}\end{equation}
- the points in $\mcl_u^<$ have linear $z$-asymptotics with $z$-factors equal to $\frac{r_b-1}{r_b}$; 

- $\mcl_u^>\neq\emptyset$ if and only if $\Gamma_{\rho>r_b}$ contains at least one non-linear germ; 
the points in $\mcl_u^>$ have superlinear $z$-asymptotics.

3) The  set of points in $T_ub\cap\Gamma$ with superlinear $z$-asymptotics is  $\mcl_u^>\sqcup (T_ub\cap\Gamma_{out})$.

4) The set of points of  intersection  $T_ub\cap\Gamma$ with linear $z$-asymptotics coincides with 
$(T_ub\cap\Gamma_{\rho=r_b})\sqcup L_u^<$.

5) Let 
$$r=r_b=\frac pq$$
be the irreducible fraction presentation of the Puiseux exponent $r_b$. Let 
 $a_1,\dots,a_N$ denote the germs forming $\Gamma_{\rho=r_b}$: they are tangent to $b$ and $r_{a_i}=r$. 
 Let $p_{a_i}$, $q_{a_i}$, $c_{a_i}$ be respectively 
 the asymptotic exponents and coefficients in their parametrizations (\ref{curve}):
 \begin{equation} p_{a_i}=s_ip, \ q_{a_i}=s_iq, \ s_i\in\nn, \ s_i=G.C.D.(p_{a_i}, q_{a_i}); \ c_{a_i}\in\cc^*.
 \label{pqs}\end{equation}
 Let $\zeta_{ij}$ ($i=1\dots,N$, $j=1,\dots p$) be the roots of the polynomials 
 \begin{equation} R_{p, q, c(i)}(\zeta)=c(i)\zeta^p-r\zeta^q+r-1, \ c(i)=\frac{c_{a_i}}{c_b}\in\cc^*. 
 \label{rpc}\end{equation}
 The asymptotic $z$-factors of points of the intersection $T_ub\cap\Gamma_{\rho=r_b}$ are $\zeta_{ij}^q$. 
 
 6) One has 
 \begin{equation} \zeta_{ij}^q\neq\frac{r-1}r \text{ for all } i \text{ and } j.\label{rneq}\end{equation}
 \end{proposition}
 
 {\bf Addendum to Proposition \ref{propasy}.} {\it In the conditions of Proposition 
 \ref{propasy} in the case, when $\Gamma$ is  a divisor,  let $m_i\in\nn$ denote the 
 multiplicities of the germs $a_i$ in $\Gamma_{\rho=\rho_b}$. The asymptotic $z$-divisor of the divisor 
 $\Gamma$ equals to 
  \begin{equation} M_z=\sum_{i=1}^N\sum_{j=1}^p\ell_i[\zeta_{ij}^q]+\kappa_z[\frac{r-1}r], \ \ell_i=m_is_i\in\nn, \ \kappa_z\in\zz_{\geq0},\label{divmz}
  \end{equation}
 \begin{equation}\kappa_z=|\mcl_u^<|>0 \text{ if and only if } \Gamma_{\rho>r_b}\neq\emptyset.\label{kappanmz}\end{equation}
  }
   
 \begin{proof} All the statements of Proposition \ref{propasy}, except for  
 inequality (\ref{rneq}), follow from 
 Propositions \ref{pasym} and \ref{asym}, see more details below. 
 Inequality (\ref{rneq}) is implied by the following 
 general proposition.
 
 \begin{proposition} \label{prneq} For every $p,q\in\nn$, $1\leq q<p$, $c\in\cc^*$, set $r=\frac pq$,  and every root $\zeta$ of the polynomial  $R_{p,q,c}(z)=cz^p-rz^q+r-1$ one has 
 \begin{equation} \zeta^q\neq\frac{r-1}r, \ c\zeta^p\neq1-r.\label{ineqq}\end{equation}
 \end{proposition}
 
\begin{proof} The proof of the first inequality repeats the proof of an equivalent statement from 
 \cite[proof of proposition 3.13]{gs}. Suppose the contrary: $\zeta^q=\frac{r-1}r$ for some root $\zeta$. 
 Then 
 $$R_{p,q,c}(\zeta)=c\zeta^p-r\zeta^q+r-1=c\zeta^p=c(\frac{r-1}r)^r\neq0.$$
 The contradiction thus obtained proves the first inequality in (\ref{ineqq}). Let us prove the second one. Suppose the contrary: 
 $c\zeta^p=1-r$ for some root $\zeta$. Then one has 
 $$R_{p,q,c}(\zeta)=c\zeta^p-r\zeta^q+r-1=-r\zeta^q\neq0.$$
 The contradiction thus obtained proves the second inequality  in (\ref{ineqq}) and Proposition \ref{prneq}. 
\end{proof}

 Set $W_i=R_{p,q,c(i)}$, $\wt W_i=R_{p_{a_i},q_{a_i},c(i)}$. 
 Statement 5) of Proposition \ref{propasy}  
 follows from Proposition \ref{asym}, Case 2) and the relation $\wt W_i(h)=W_i(h^{s_i})$, 
which implies that 
 to every root $\zeta$ of the polynomial $W_i$ correspond   $s_i$ 
 roots $\zeta^{\frac1{s_i}}$  of the polynomial $\wt W_i$ whose  
 $q_{a_i}$-th powers are equal to $\zeta^q$. Statements (\ref{divmz}) and (\ref{kappanmz}) follow from 
Statements 4), 5) of Proposition \ref{propasy}, the above discussion 
and  inequality (\ref{rneq}). 
\end{proof}

Recall that $a_1,\dots,a_N$ denote the germs forming $\Gamma_{\rho=r_b}$. 

\begin{proposition} \label{propasy2} 1) The set  
of points of intersection $T_ub\cap\Gamma$ with sublinear 
$w$-asymptotics is exactly the set $\mcl_u^<$ from (\ref{tusplit}).

2) The set of points of intersection $T_ub\cap\Gamma$ with superlinear $w$-asymptotics is the union 
$\mcl_u^>\sqcup (T_ub\cap\Gamma_{out})$.

3) The set of points of  intersection  $T_ub\cap\Gamma$ with linear $w$-asymptotics is  
$T_ub\cap(\Gamma_{\rho<r_b}\sqcup\Gamma_{\rho=r_b})$.  The asymptotic $w$-factors of the points in 
$T_ub\cap \Gamma_{\rho<r_b}$ 
are all equal to $1-r$, $r=r_b$. The asymptotic $w$-factors of the points in $T_ub\cap a_i$ are equal to $c(i)\zeta_{ij}^p$, 
$i=1,\dots,N$, $j=1,\dots,p$, where $\zeta_{ij}$ are the roots of the polynomials 
$R_{p,q,c(i)}$, see (\ref{rpc}). One has 
\begin{equation}c(i)\zeta_{ij}^p\neq 1-r \text{ for all } i \text{ and } j.\label{wneq}\end{equation}

4)  In the case, when $\Gamma$ is a divisor, let  $m_i$, $s_i$ be the same, as in (\ref{divmz}). 
The asymptotic $w$-divisor of the multigerm $\Gamma$ equals to 
  \begin{equation} M_w=\sum_{i=1}^N\sum_{j=1}^p\ell_i[c(i)\zeta_{ij}^p]+\kappa_w[(1-r)], \ \ell_i=m_is_i\in\nn, \ \kappa_w\in\zz_{\geq0},\label{divmw}
  \end{equation}
 \begin{equation}\kappa_w=|T_ub\cap\Gamma_{\rho<r_b}|>0 \text{ if and only if } \Gamma_{\rho<r_b}\neq\emptyset.\label{kappan}\end{equation}
\end{proposition} 

All the statements of Proposition \ref{propasy2} follow from Propositions \ref{pasym} and \ref{asym}, except for 
inequality (\ref{wneq}) (which follows from (\ref{ineqq})) and the part of statement  2) saying that the points in $T_ub\cap\Gamma_{out}$ 
have superlinear $w$-asymptotics, which is given by the following proposition.  

\begin{proposition} \label{supw} For every irreducible germ $a$ of analytic curve at 
any point $B\in T_Cb$, $B\neq C$, the points of intersection $T_ub\cap a$ have superlinear $w$-asymptotics, 
as $u\in b$ tends to $C$.
\end{proposition}

\begin{proof} For $u\in b$ being close enough to $C$, let $Q_1=Q_1(u)$  denote  the point of the intersection of 
the line $T_ub$ with  the $z$-axis. Fix an arbitrary family of points $Q_2(u)$ of the 
intersection $T_ub\cap a$. 
Their limits $Q_1(C)=C$ and $Q_2(C)=B$ lie in the $z$-axis 
and are distinct, by assumption; $z(C)=0\neq z(B)$. Let us show that $w(u)=o(w(Q_2(u)))$, as  $u\to C$. 

Let $T=T(u)$, $O=O(u)$  denote the respectively the projections of the points $u$ and $Q_2$  to the $z$-axis: 
$z(T)=z(u)$, $z(O)=z(Q_2)$. 
Consider the triangles $TQ_1u$ and $OQ_1Q_2$. They are similar in the following complex sense. 
Their edges $Tu$ and $OQ_2$ lie in complex lines 
parallel to the $w$-axis. Their edges $TQ_1$, $OQ_1$ lie in the  complex $z$-axis. 
Their edges $uQ_1$ and $Q_2Q_1$ lie in the same complex line $Q_1Q_2$. The parallelness of complexified 
edges of the above triangles implies that 
\begin{equation}\frac{w(u)-w(T)}{w(Q_2)-w(O)}=\frac{z(T)-z(Q_1)}{z(O)-z(Q_1)}.\label{above}\end{equation}
Substituting the equalities and asymptotics $w(T)=w(O)=0$, $z(Q_1(u))\to0$, $z(T)=z(u)\to0$, 
and $z(O(u))-z(Q_1(u))\to z(O(C))=z(B)\neq0$ to  formula (\ref{above}) yields to 
$\frac{w(u)}{w(Q_2)}\to0$. This proves 
Propositions \ref{supw} and  \ref{propasy2}.
\end{proof}

\subsection{Families of degenerating conformal involutions}
 
 In Subsection 4.3 we show that for every local branch $b$ as  in Theorem \ref{iii}  the corresponding 
 family of $\ii$-angular symmetries $T_ub\to T_ub$ with center $u$ written in appropriate coordinate 
 becomes a degenerating family of  conformal involutions $\oc\to\oc$ of one of the following types.
 
 \begin{definition} \label{defiab} Consider a family of  non-trivial 
 conformal involutions $\sigma_u:\oc\to\oc$ of the Riemann 
 sphere with coordinate $z$ that are parametrized by a small complex parameter $u$ with a given family 
 of fixed points $\zeta(u)$: 
 $$\sigma_u(\zeta(u))=\zeta(u); \ \ \zeta(u)\to0, \text{ as } u\to0.$$
  The family $\sigma_u$ is said to be 
 
 - {\it of type A,} if there  exist families of points $\alpha(u), \omega(u)\in\oc$ such that 
$$\sigma_u(\alpha(u))=\omega(u), \ \alpha(u)=o(\zeta(u)), \ \zeta(u)=o(\omega(u)), \text{ as } u\to0;$$ 

- {\it of type B,} if there exist families of points $\alpha(u), \omega(u)\in\oc$ such that 
$$\sigma_u(\alpha(u))=\omega(u), \ \alpha(u),\omega(u)=o(\zeta(u)), \text{ as } u\to0.$$ 
\end{definition}

\begin{proposition} \label{invdeg}  Each family of involutions $\sigma_u:\oc\to\oc$ of type A with given fixed points 
$\zeta(u)$ satisfies the following statements:

(a) The involutions $\sigma_u$ converge to the 
constant mapping $\oc\mapsto0$ uniformly on compact subsets in $\oc\setminus\{0\}$. 

(b) Fix a $c\in\cc^*$ and a family of points $z_u\in\cc$ 
with the asymptotics $z_u=c\zeta(u)(1+o(1))$, as $u\to0$. Then 
\begin{equation}\sigma_u(z_u)=c^{-1}\zeta(u)(1+o(1)), \text{ as } u\to0.\label{invdeg1}\end{equation}
\end{proposition}

 \def\la{\lambda}

\begin{proof} The scalings $\phi_u:z\mapsto \wt z=\frac z{\zeta(u)}$ conjugate the involutions $\sigma_u$ to the conformal involutions 
$\Sigma_u=\phi_u\circ\sigma_u\circ\phi_u^{-1}:\oc\to\oc$ fixing 1 and permuting the points $\frac{\alpha(u)}{\zeta(u)}$ and $\frac{\omega(u)}{\zeta(u)}$; $\frac{\alpha(u)}{\zeta(u)}\to0$, and $\frac{\omega(u)}{\zeta(u)}\to\infty$, 
as $u\to 0$.  
Hence, $\Sigma_u(z)\to\frac1z$ in $Aut(\oc)$ and thus, uniformly on $\oc$. 
For every $\delta>0$ the mapping $\sigma_u=\phi_u^{-1}\circ\Sigma_u\circ\phi_u$ converges 
to the constant mapping $\oc\mapsto0$ uniformly on $\oc\setminus D_{\delta}$. Indeed,   
$\phi_u(z)=\frac z{\zeta(u)}\to\infty$  uniformly on $\oc\setminus D_{\delta}$, since $\zeta(u)\to0$. Hence $f_u=\Sigma_u\circ\phi_u\to0$, 
$\sigma_u=\phi_u^{-1}\circ f_u=\zeta(u)f_u\to0$. This proves statement (a). For 
$z_u=c\zeta(u)(1+o(1))$ with $c\neq0$ one has 
$$\sigma_u(z_u)=\zeta(u)\Sigma_u((\zeta(u))^{-1}z_u)=\zeta(u)\Sigma_u(c+o(1))=
\zeta(u)(c^{-1}+o(1)).$$
This proves statement (b) and finishes the proof of the proposition.
\end{proof}

\begin{proposition} \label{invdeg2}  Each family of involutions $\sigma_u:\oc\to\oc$ of type B with given fixed points 
$\zeta(u)$ satisfies the following statements:

(a) The coordinate change $\wt z=\frac{\zeta(u)}z$ conjugates the involutions $\sigma_u$ 
to conformal involutions $\Sigma_u:\oc\to\oc$ that converge in $Aut(\oc)$ to the central symmetry with respect to one: 
$\wt z\mapsto 2-\wt z$. 

(b) For every $c\in\cc\setminus\{0,2\}$ and every family of points $z_u=c^{-1}\zeta(u)(1+o(1))$ one has $\sigma_u(z_u)=d^{-1}\zeta(u)(1+o(1))$, where $d=2-c$.
\end{proposition}

\begin{proof} The above change of coordinate $z\mapsto\wt z$ sends the fixed point $\zeta(u)$ of the involution $\sigma_u$ 
to 1, and $\wt z(\alpha(u)),\wt z(\omega(u))\to\infty$, as $u\to0$, since 
$\alpha(u),\omega(u)=o(\zeta(u))$. Therefore, the involution $\sigma_u$ written 
in the coordinate $\wt z$ fixes 1 and permutes two  points converging to infinity.  Its derivative at the fixed point 1 equals to -1, 
since the involution is nontrivial. Therefore, it converges to the unique non-trivial involution fixing 1 and $\infty$: the central symmetry 
with respect to 1. Statement (a) is proved. Statement (a) immediately implies statement (b). The proposition is proved. 
\end{proof}

\def\mcd{\mathcal D}

\subsection{Relative projective symmetry properties and their types}

 \begin{definition} \label{bab} Let $b$ be a nonlinear irreducible germ of analytic curve at a point $C\in\cp^2$. 
Let 
$\Delta=\sum_{j=1}^lk_jb_j$ be a $(T_Cb,C)$-local divisor containing $b$: say, $b_1=b$.
We say that the germ $b$ 
 has {\it relative projective symmetry property} 
with respect to the divisor $\Delta$,   if for every 
$u\in b\setminus\{ C\}$ there exists a projective involution 
$\sigma_u:T_ub\to T_ub$ with fixed point $u$ 
 such that the intersection  $\Delta\cap T_ub$  treated as a divisor on 
 $T_ub$ is $\sigma_u$-invariant. (We identify a point $u\in b$ with the corresponding value of the small complex 
 parameter $t$ of the curve $b$, $t(C)=0$; thus, $t(u)\to0$, as $u\to C$.) 
 For any given affine coordinate $h$ on a neighborhood of the point $C$ in $\cp^2$ with $h(C)=0$ 
 we say that $b$ has  relative projective symmetry property {\it of type A-$h$ (B-$h$)}, 
 if the family of involutions $\sigma_u$ written in the coordinate $h$ on the lines $T_ub$ is of type A (respectively, B), 
 see Definition \ref{defiab}, with $\zeta(u)=h(u)$ (the specified fixed point family). 
 \end{definition}
 
 \begin{proposition} \label{typin} Let $\ii\subset\cp^2$ be a conic: either a  regular conic, or a pair of distinct lines. 
  Let an irreducible algebraic curve $\gamma\subset\cp^2$ generate a rationally integrable 
 $\ii$-angular billiard with integral $G$, let $C\in\gamma$. Let $\Delta$ denote  the zero divisor  of the function $G$. Every local  branch $b$ of the curve 
 $\gamma$ at $C$ has  relative projective symmetry property with respect to the $(T_Cb,C)$-localization (see Definition \ref{deflm}) of 
 each one of the divisors $\Delta$ and $\Delta+\ii$: 
 the corresponding projective involution from Definition \ref{bab} is 
 the $\ii$-angular symmetry centered at $u$. In the case, when $C\in\gamma\cap\ii$, the following statements hold 
 in the corresponding cases listed below; here $(z,w)$ is a system of affine coordinates centered at $C$ and 
 adapted to $b$. 
 
 Case 1):  $C$ is a regular point of the conic $\ii$, and $b$ is transversal to $\ii$. 
 Then $b$ has relative projective symmetry property of type A-$z$.
 
 Case 2): $\ii$ is a pair of lines through the point $C$ that are both transversal to $b$. 
 Then $b$ has relative projective symmetry property of type B-$z$.
 
 Case 3): $C$ is a regular point of the conic $\ii$, and $b$ is tangent to $\ii$. 
 
Subcase 3a): $\ii$ is a pair of lines. Then $b$ has relative projective symmetry property of type A-$w$.
 
 Subcase 3b): $\ii$ is a regular conic and $r_b<2$. Then $b$ has relative projective symmetry property of type A-$w$.
 
 Subcase 3c): $\ii$ is a regular conic and $r_b>2$. Then $b$ has relative projective symmetry property of type B-$z$.
 \end{proposition}

\begin{proof} The first statement of the proposition follows immediately from definition. Let us prove its other 
statements case by case.  

Case 1). Then the line $T_Cb$ intersects $\ii$ at two points: the point $C$ and a  point $B\neq C$. Let 
$\ii_C$ and $\ii_B$ denote  the germs of the conic $\ii$ at $C$ and $B$ respectively. As $u\in b$ 
tends to $C$, the $\ii$-angular symmetry of the line $T_ub$ with center $u$ permutes its points $C_u$, $B_u$ of 
intersection with  $\ii_C$ and $\ii_B$. The coordinate $z(B_u)$ tends to a non-zero (may be infinite) limit, 
and $z(C_u)=o(z(u))$, as $u\to C$, by transversality of the germs $\ii_C$ and $b$ and 
Proposition \ref{pasym}. Therefore, 
the $\ii$-angular symmetries under question written in the coordinate $z$ form a family of conformal involutions 
of type A. 

Case 2). As $u\to b$, the line $T_ub$ intersects $\ii$ at two points permuted by the 
$\ii$-angular symmetry. These intersection points tend to $C$, and their $z$-coordinates are $o(z(u))$, by 
transversality, as in the above case. Hence, the $\ii$-angular symmetries of the lines $T_ub$ 
written in the coordinate $z$ 
form a family of involutions of type B. 

Case 3). 

Subcase 3a). Then the conic $\ii$ consists of two distinct lines intersecting at some 
point $B\neq C$: the line $\ii_C=T_Cb$ 
and a line $\ii_B$. The $(T_Cb,C)$-localization of the conic $\ii$ consists of two germs: the germ of the line $\ii_C$ at $C$; the germ of the line $\ii_B$ at $B$. 
As $u\in b$ tends to $C$, the line $T_ub$ intersects $\ii_C$ and $\ii_B$ at points $C_u$ and $B_u$ respectively, 
which are permuted by the $\ii$-angular symmetry with center $u$; $C_u\to C$, 
$B_u\to B$, as $u\to C$. One has  
$w(C_u)=0$, since $\ii_C=T_Cb$ is the $z$-axis, 
 and $w(u)=o(w(B_u))$,  by  Proposition 
\ref{supw}. Therefore, the $\ii$-angular symmetries of the lines $T_ub$ written in the coordinate $w$  
form a family of involutions of type A.

Subcase 3b). Then the  $(T_Cb,C)$-localization of the conic $\ii$ consists of just one regular germ 
at $C$, whose Puiseux exponent 2 is greater than $r_b$. As $u\in b$ tends to $C$, the line $T_ub$ intersects 
$\ii$ at two points $C_u$ and $B_u$ tending to $C$ so that $w(C_u)=o(w(u))$ and $w(u)=o(w(B_u))$, 
by Proposition \ref{asym}, Case 1). The points $C_u$ and $B_u$ are permuted by the $\ii$-angular symmetry 
with center $u$. Therefore, the $\ii$-angular symmetries of the lines $T_ub$ written in the coordinate $w$ 
form a family of conformal involutions of type $A$. 

Subcase 3c). Then $r_b>2=r_{\ii}$. As $u\in b$ tends to $C$, both points of intersection $T_ub\cap\ii$ tend to $C$ 
so that their $z$-coordinates are $o(z(u))$, by Proposition \ref{asym}, Case 3). The latter points are permuted by the 
$\ii$-angular symmetry centered at $u$. Therefore, these $\ii$-angular symmetries of the lines $T_ub$ written in the 
coordinate $z$ form a family of conformal involutions of type $B$. This proves Proposition \ref{typin}.
\end{proof}

 \subsection{Symmetry of asymptotic divisors.  Proof of statements 
 (i) and (ii-a)}
 
 Here we prove the following theorem generalizing statements (i) and (ii-a). 
   
 \begin{theorem} \label{tquad} Let $b$ be a nonlinear irreducible germ of analytic curve in $\cp^2$ at a point $C$, and let $(z,w)$ be affine 
 coordinates centered at $C$ that are adapted  to $b$. Let $b$ have local relative 
 projective symmetry property of type either A-$w$, or B-$z$. Then $b$ is quadratic.
 \end{theorem}
 
 We will deduce Theorem \ref{tquad} from invariance of asymptotic  divisors under appropriate 
 conformal involutions, see the following propositions. 
 
 \begin{proposition} \label{inva} Let an irreducible germ $b\subset\cp^2$ of analytic curve at a point $C$ have local relative projective 
 symmetry property of type A-$h$ for some affine coordinate $h$, $h(C)=0$. Then its asymptotic $h$-divisor 
 is invariant under the involution $\oc\to\oc$ of taking inverse: $z\mapsto z^{-1}$.
 \end{proposition}
 
 Proposition \ref{inva} follows from  Proposition \ref{invdeg}, Statement (b). 
 
 \begin{definition} For a divisor $M=\sum_jk_j[z_j]$ on $\oc$ its {\it inverse divisor} is 
 $$M^{-1}=\sum_jk_j[z_j^{-1}].$$
 For every divisor $M$ on $\oc$ and every subset $K\subset\oc$ by $M\setminus K$ we denote the divisor obtained 
 from $M$ by deleting those its points that lie in $K$ (taken with their total multiplicities). 
 \end{definition}
 
 \begin{proposition} \label{invb} Let an irreducible germ $b\subset\cp^2$ of analytic curve at a point $C$ have local relative projective 
 symmetry property of type B-$h$ for some affine coordinate $h$, $h(C)=0$. Let $M_h^{-1}$ denote 
  the inverse to its asymptotic $h$-divisor $M_h$. The divisor $M_h^{-1}\setminus\{2\}$ 
   is invariant under the central symmetry $\cc\to\cc$ with respect to one: $z\mapsto 2-z$.
   \end{proposition}
   
   Proposition \ref{invb} follows from  Proposition \ref{invdeg2}, Statement (b). 
   
   \begin{proof} {\bf of Theorem \ref{tquad}.} 
   
   Case 1) of symmetry property of type A-$w$. The asymptotic $w$-divisor 
    $M_w$ being invariant under taking inverse (Proposition \ref{inva}), the product of its points equals to one. On the 
    other hand, the latter product equals to the product of natural powers of expressions 
    \begin{equation} U_i=\prod_{j=1}^p(c(i)\zeta_{ij}^p)=(c(i))^p(\prod_{j=1}^p\zeta_{ij})^p\label{ppi}\end{equation}
    and a non-negative integer power  of the number $1-r$, see 
    (\ref{divmw}). One has $\prod_{j=1}^p\zeta_{ij}=(c(i))^{-1}(r-1)$ up to sign, by Vieta's Formula. 
    Therefore, in  formula (\ref{ppi}) the number $c(i)$ cancels out and $U_i=\pm(1-r)^p$. Finally, the product of 
    points of the divisor $M_w$, which is equal to one,  equals to  a natural power of the number $1-r$, up to sign. 
    Hence, $r=2$ and the germ $b$ is quadratic. 
    
  Case 2) of symmetry property of type B-$z$. The divisor $M_z^{-1}\setminus\{2\}$  
  being invariant under the symmetry with respect to one (Proposition \ref{invb}), the sum of its points equals to 
  its degree. Let us write this equation explicitly and deduce that $r=r_b=2$. 
  
  The divisor $M_z^{-1}$ has the form 
$$M_z^{-1}=\sum_i\ell_i\sum_{j=1}^p[\theta_{ij}^q] + \kappa_z[\frac r{r-1}], \ \theta_{ij}=\zeta_{ij}^{-1},  \ \kappa_z\in\zz_{\geq0},$$
$\ell_i\in\nn$, see (\ref{divmz}). The numbers $\theta_{ij}$ are the roots of the polynomials 
$$H_{p,q,c(i)}(\theta)=\theta^p R_{p,q,c(i)}(\theta^{-1})=(r-1)\theta^p-r\theta^{p-q}+c(i).$$ 
The points of the divisor $M_z^{-1}$ are  distinct from zero. Those of them that are powers $\theta_{ij}^q$ are different  
 from the number $\frac r{r-1}$, by Proposition \ref{prneq}. 
A priori, $M_z^{-1}$ may contain some of the points 2 and $\frac{r-2}{r-1}=2-\frac r{r-1}$, which are 
symmetric to 0 and  $\frac r{r-1}$, respectively. Set  
$M=M_z^{-1}\setminus\{2, \frac r{r-1}, \frac{r-2}{r-1}\}$:
\begin{equation}M=\text{ the sum of those terms } \ell_i[\theta_{ij}^q], \text{ for which } 
\theta_{ij}^q\neq2,\frac{r-2}{r-1}.\label{msumt} \end{equation}
 The divisor $M$ is symmetric with respect to one,  as is $M_z^{-1}\setminus\{2\}$. 

\begin{lemma} \label{lem27} \cite[lemma 3.16]{gs}. 
Let $r=\frac pq>1$; here $p,q\in\nn$, $(p,q)=1$. 
Consider a finite collection of polynomials $H_{p,q,c(i)}(\theta)$, $c(i)\neq0$ and numbers $\ell_i\in\nn$, 
$i=1,\dots,N$. Let $\theta_{ij}$ denote the roots of the polynomials $H_{p,q,c(i)}$. Let the divisor $M$ given by (\ref{msumt}) 
be invariant under the symmetry of the line $\cc$ with respect to one. Then $r=2$.
\end{lemma}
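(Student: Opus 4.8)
The plan is to eliminate the auxiliary roots $\theta_{ij}$ and reduce the statement to one explicit family of polynomials. First, for a single $c\neq0$ I would show that the multiset $\{\theta^q : H_{p,q,c}(\theta)=0\}$ of $q$-th powers of the roots of $H_{p,q,c}$ coincides, with multiplicities, with the root multiset of
$P_c(\zeta):=c^q-\zeta^{p-q}(r-(r-1)\zeta)^q$.
Indeed, a root $\theta$ of $H_{p,q,c}$ is nonzero (as $c\neq0$); dividing $(r-1)\theta^p-r\theta^{p-q}+c=0$ by $\theta^{p-q}$ and solving gives $\theta^{p-q}=c/(r-(r-1)\theta^q)$, the denominator being nonzero by Proposition \ref{noth}, and raising to the $q$-th power shows $P_c(\theta^q)=0$. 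Equality of the whole divisors follows from the product identity $\prod_j(\zeta-\theta_j^q)=\pm(r-1)^{-q}P_c(\zeta)$, which I would obtain by writing $\prod_j(X^q-\theta_j^q)=\prod_{\omega^q=1}\prod_j(X-\omega\theta_j)$, observing that $\prod_j(X-\omega\theta_j)$ is a rescaling of $H_{p,q,c}(X/\omega)$, and using $\gcd(p,q)=1$ so that $\omega\mapsto\omega^p$ permutes the $q$-th roots of unity. Consequently the divisor $\widehat M:=\sum_{i,j}s_i[\theta_{ij}^q]$ is the root divisor of $\Phi:=\prod_i P_{c(i)}^{\,s_i}$. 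Note $P_c(0)=P_c(\frac r{r-1})=c^q\neq0$, so neither $0$ nor $\frac r{r-1}$ occurs in $\widehat M$; the divisor $M$ of (\ref{msumt}) is obtained from $\widehat M$ by deleting all occurrences of $2$ and $\frac{r-2}{r-1}$, which are exactly the images of $0$ and $\frac r{r-1}$ under the involution $\zeta\mapsto2-\zeta$.

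Next I would translate by $\eta=\zeta-1$, so that $\zeta\mapsto2-\zeta$ becomes $\eta\mapsto-\eta$ with fixed point $\eta=0$, and write $P_c=c^q-Q(\eta)$ with $Q(\eta)=(1+\eta)^{p-q}(1-(r-1)\eta)^q$. A logarithmic-derivative computation, using the identity $p-q=q(r-1)$, gives $Q(0)=1$, $Q'(0)=0$, $Q''(0)=-qr(r-1)\neq0$ and $Q'''(0)=2qr(r-1)(2-r)$. When $r=2$ one has $p-q=q$ and $Q(\eta)=(1-\eta^2)^q$ is even, so each $P_{c(i)}$ and hence $M$ are $(\eta\mapsto-\eta)$-invariant; this is the converse implication, and it tells me that for $r\neq2$ the obstruction should live in the $\eta$-parity of the polynomial cut out by $M$. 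So assume from now on $r\neq2$; then $0,2,\frac r{r-1},\frac{r-2}{r-1}$ are pairwise distinct. Let $n_2$ and $m$ be the multiplicities of $2$ and $\frac{r-2}{r-1}$ in $\mathrm{div}\,\Phi$, and put $\Psi=\Phi/\big((\eta-1)^{n_2}(\eta+\frac1{r-1})^{m}\big)$, a polynomial with root divisor $M$. The multiplicity of $\eta=0$ in $\Phi$ equals $2\Delta_0$, where $\Delta_0:=\sum\{s_i:c(i)^q=1\}$ (a factor $P_{c(i)}$ with $c(i)^q=1$ has a root of order exactly $2$ at $\eta=0$ since $Q''(0)\neq0$); writing $\Psi=\eta^{2\Delta_0}\Psi_0$ with $\Psi_0(0)\neq0$, the hypothesis that $M$ is $(\eta\mapsto-\eta)$-invariant gives $\Psi(-\eta)=\pm\Psi(\eta)$, hence $\Psi_0(-\eta)=\pm\Psi_0(\eta)$, and evaluating at $\eta=0$ pins the sign to $+$: $\Psi_0$ has no odd-degree terms.

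From this I would extract two scalar equations. (a) Vanishing of the coefficient of $\eta^1$ in $\Psi_0$: expanding $\Phi$ near $\eta=0$, the factors with $c(i)^q\neq1$ contribute no $\eta^1$ term (because $Q'(0)=0$), while those with $c(i)^q=1$ contribute through $Q'''(0)$; combining with the expansion of the denominator one finds $n_2-(r-1)m=-\frac{\Delta_0 Q'''(0)}{3Q''(0)}=\frac{2\Delta_0(2-r)}{3}$. (b) The first moment: since $M$ is symmetric about $1$, one has $\sum_{\zeta\in M}\zeta=\deg M$; as only the constant term of $P_c$ depends on $c$, Vieta's formula gives that each $P_c$ has root-sum $\frac{qr}{r-1}=\frac p{r-1}$, and subtracting the deleted points yields $pN'(2-r)=n_2(r-1)-m$, where $N':=\sum_i s_i$. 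Eliminating $n_2$ between (a) and (b) and dividing by $2-r\neq0$ gives $pN'+rm=\frac{2(r-1)}{3}\Delta_0$. But $p\geq r$, $m\geq0$ and $N'\geq1$, so the left side is $\geq rN'$, while $\Delta_0\leq N'$ and $r>1$ make the right side $\leq\frac{2(r-1)}{3}N'$; hence $r\leq\frac{2(r-1)}{3}$, i.e.\ $r\leq-2$, contradicting $r>1$. Therefore $r=2$.

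The step I expect to be the main obstacle is the exact identification in the first paragraph of the $q$-th-power divisor with $\mathrm{div}\,P_c$: the inclusion is immediate, but equality requires the $\gcd(p,q)=1$ product computation and a careful degree bookkeeping. The second delicate point is the case $c(i)^q=1$ in equation (a), where $\eta=0$ becomes a root of $P_{c(i)}$ and produces the extra term proportional to $Q'''(0)$; getting the precise value $\frac{2(2-r)}{3}$ of $\frac{Q'''(0)}{3Q''(0)}$ right, with the correct sign, is exactly what makes the final inequality close.
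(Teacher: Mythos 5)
The paper does not actually prove Lemma \ref{lem27}: it imports it verbatim from \cite[lemma 2.7]{gs}, and the only accompanying text here is the remark reconciling the two formulations (polynomials $H_{p_i,q_i,c(i)}$ with $p_i/q_i=r$ versus $H_{p,q,c(i)}$ weighted by $s_i=p_i/p$). So there is no in-paper proof to compare against; your argument is a self-contained replacement for the citation, and after checking it line by line I find it correct. The key identity of the first paragraph holds: for $\omega^q=1$ one has $\prod_j(X-\omega\theta_j)=\frac{\omega^p}{r-1}H_{p,q,c}(X/\omega)=\frac1{r-1}\bigl((r-1)X^p-rX^{p-q}+c\,\omega^p\bigr)$, and since $\gcd(p,q)=1$ makes $\omega\mapsto\omega^p$ a permutation of the $q$-th roots of unity, the product over $\omega$ equals $(r-1)^{-q}\bigl(A^q-(-c)^q\bigr)$ with $A=-X^{p-q}(r-(r-1)X^q)$, which is $(-1)^{q+1}(r-1)^{-q}P_c(X^q)$; degrees match, so the $q$-th powers of the roots are exactly the root divisor of $P_c$. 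The derivative values $Q'(0)=0$, $Q''(0)=-qr(r-1)$, $Q'''(0)=2qr(r-1)(2-r)$ follow from the logarithmic derivative and $p-q=q(r-1)$; the multiplicity of $\eta=0$ in $\Phi$ is exactly $2\Delta_0$ because $Q''(0)\neq0$; and the sign pinning $\Psi_0(-\eta)=+\Psi_0(\eta)$ via $\Psi_0(0)\neq0$ is right. I recomputed both scalar relations, (a) $n_2-(r-1)m=-\Delta_0 Q'''(0)/(3Q''(0))=\tfrac23\Delta_0(2-r)$ and (b) $pN'(2-r)=n_2(r-1)-m$ from the root sum $p/(r-1)$ of each $P_c$, and their combination $pN'+rm=\tfrac{2(r-1)}3\Delta_0$ is indeed incompatible with $p\geq r$, $m\geq0$, $\Delta_0\leq N'$, $N'\geq1$, $r>1$. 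Two points worth making explicit in a write-up: you use $N\geq1$ (for $N=0$ the statement is vacuously false, so nonemptiness is the intended reading), and for $r\neq2$ you need the four points $0,2,\tfrac r{r-1},\tfrac{r-2}{r-1}$ pairwise distinct, which you correctly note. What your route buys is an entirely explicit reduction: the symmetry hypothesis becomes evenness of one concrete polynomial $\Psi$ in $\eta=\zeta-1$, from which only the linear coefficient and the first moment are needed, and the same computation shows the converse (for $r=2$ each $Q(\eta)=(1-\eta^2)^q$ is even, so the symmetry genuinely holds).
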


\begin{remark} In fact, lemma 3.16 in \cite{gs}  was stated in a slightly different but equivalent form. It dealt with 
a collection of polynomials $H_{p_i,q_i,c(i)}$, $q_i,p_i\in\nn$, $\frac{p_i}{q_i}=r>1$, $c(i)\neq0$ and the divisor $M$ of those 
 $q_i$-th powers of their roots that are distinct from the numbers 2 and $\frac{r-2}{r-1}$.  
Set $s_i=G.C.D(p_i,q_i)$. The latter $q_i$-th powers of roots coincide with the $q$-th powers of roots of the 
corresponding polynomials $H_{p,q,c(i)}$, $p=\frac{p_i}{s_i}$, $q=\frac{q_i}{s_i}$, and the divisor $M$ contains each of them $s_i$ times. Hence,  $M$ 
is given by (\ref{msumt}) with $\ell_i=s_i$, and this yields to equivalence of the above lemma to 
\cite[lemma 3.16]{gs}.
\end{remark}

Lemma \ref{lem27} together with the symmetry of the divisor $M$ given by (\ref{msumt})  imply that $r=2$. 
Theorem \ref{tquad} is proved.
\end{proof}

\begin{proof} {\bf of statements (i) and (ii-a) of Theorem \ref{iii}.} Every branch $b$ satisfying condition (i) of Theorem \ref{iii} has local relative projective symmetry property 
of type B-$z$, by Proposition \ref{typin}, Case 2). Hence, it is quadratic, by 
Theorem \ref{tquad}. Statement (i) is proved.

Let us prove statement (ii-a). Let $b$ be a branch satisfying condition (ii-a) of Theorem \ref{iii}. Then its base 
point $C$ is a regular point of the conic $\ii$, and $b$ is tangent to $\ii$. We treat the two 
following cases separately.

Case 1): $\ii$ is a union of two lines. Then $b$ has local relative projective symmetry property of type A-$w$, by Proposition \ref{typin}, Subcase 3a). Hence, it is quadratic, by Theorem 
\ref{tquad}. 

Case 2): $\ii$ is a regular conic. Suppose the contrary: $r=r_b\neq2$. We treat 
the two following subcases separately.

Subcase 2a): $r<2$. Then $b$ has local relative projective symmetry property of type A-$w$, by Proposition \ref{typin}, Subcase 3b). Hence, it is quadratic, by Theorem 
\ref{tquad}, -- a contradiction.

Subcase 2b): $r>2$. Then $b$ has local relative projective symmetry property of type B-$z$, by Proposition \ref{typin}, Subcase 3c). Hence, it is quadratic, by Theorem 
\ref{tquad}, -- a contradiction. Statements (i) and (ii-a) are proved.
\end{proof}

\subsection{Subquadraticity}
Here we prove the following theorem implying that every local branch $b$ satisfying the conditions 
of Statement (ii-b) of Theorem \ref{iii}  is subquadratic. Recall that such a branch has local relative projective 
symmetry property of type A-$z$, see Proposition \ref{typin}, Case 1). 

In what follows $b\subset\cp^2$ is a nonlinear irreducible germ of 
analytic curve at a point $C$, and $(z,w)$ are affine coordinates centered at $C$ and adapted to $b$. 

\begin{theorem} \label{subq} Every germ $b$ having local relative projective symmetry 
property of type A-$z$ with respect to some $(T_Cb,C)$-local divisor $\Gamma$ 
 is subquadratic. 
 \end{theorem}
 
 \begin{proof} In what follows for a given divisor $M$ on $\cc$ by $S(M)$ we denote the sum of its points. 
 The asymptotic $z$-divisor $M_z$ is invariant under taking inverse (Proposition \ref{inva}). 
 Therefore, $S(M_z)=S(M^{-1}(z))$. Let us write down the latter equality explicitly. 
 Let $a_1,\dots,a_N$ be the germs in $\Gamma$ that are tangent to $b$ and have 
 the same Puiseux exponent $r=r_b$. Let $\zeta_{ij}$ be the same, as in 
 (\ref{divmz}), set $\theta_{ij}=\zeta_{ij}^{-1}$.  One has 
\begin{equation}S(M_z)=\sum_{ij}\ell_i\zeta_{ij}^q+\kappa_z\frac{r-1}r=S(M_z^{-1})=\sum_{ij}\ell_i\theta_{ij}^{q}+
\kappa_z\frac r{r-1},\label{eqsum}\end{equation}
by (\ref{divmz}). 
Recall that for every fixed $i$ the numbers $\theta_{ij}$ are the roots of the polynomial $(r-1)\theta^p-r\theta^{p-q}+c(i)$. 
Hence, the sum of their $q$-th powers equals to 
$\frac p{r-1}$, by \cite[formula (3.17)]{gs}, and 
\begin{equation}S(M_z^{-1})=\frac{\Pi}{r-1}+\kappa_z\frac r{r-1}, \ \Pi=p\sum_i\ell_i.\label{equi}\end{equation}
Suppose the contrary: $r>2$, i.e., $p>2q$. Then $\sum_j\zeta_{ij}^q=0$ for every $i=1,\dots,N$. Indeed, the latter sum is expressed 
as a polynomial in the symmetric polynomials in $\zeta_{ij}$ of degrees $1,\dots,q$. 
All of these symmetric polynomials vanish, as do the coefficients 
of the polynomial $R_{p,q,c(i)}(\zeta)=c(i)\zeta^p-r\zeta^q+r-1$ at monomials of degrees $p-1,\dots,p-q>q$. Hence, 
$S(M_z)=\kappa_z\frac{r-1}r$. Substituting the latter equality and (\ref{equi}) to (\ref{eqsum}) yields to 
$$S(M_z)=\kappa_z\frac{r-1}r=S(M_z^{-1})=\frac{\Pi}{r-1}+\kappa_z\frac r{r-1}>\kappa_z\frac r{r-1}.$$
The latter inequality is strict, since $\Pi>0$: the collection of germs $a_i$ contains $b$, and hence, is non-empty. 
But its right-hand side  is no less than the left-hand side, since $\frac r{r-1}>1>\frac{r-1}r$. The 
contradiction thus obtained proves the inequality $r\leq2$. 
\end{proof}

{\bf Open Problem.} {\it Is it true that every germ $b$ having local relative projective symmetry property of type A-$z$ is 
a) quadratic? 
b) regular and quadratic?}

\subsection{Puiseux exponents}
Here we prove the following theorem implying that for every local branch $b$ of the curve $\gamma$ 
satisfying the conditions 
of Statement (ii-b) one has $\Gamma_{\rho>r_b}=\emptyset$, that is,  $b$ has the maximal Puiseux exponent 
among all the local branches of the curve $\Gamma$ that are tangent to $b$. 
\def\mcp{\mathcal P}

 \begin{theorem} \label{puiseux} Let  $b\subset\cp^2$ be a nonlinear irreducible germ of analytic curve at a point $C$, 
 and let $(z,w)$ be affine coordinates centered at $C$ and adapted to $b$. Let $b$ have local relative projective 
  symmetry property of type A-$z$ with respect to a $(T_Cb,C)$-local divisor  
 $\Delta$. Then each irreducible germ  at $C$ tangent to $b$ in the divisor $\Delta$ 
has Puiseux exponent no greater than $r_b$. 
\end{theorem}

The existence of a germ $a$  in $\Delta$ tangent to $b$ with $r_a>r=r_b$ is equivalent to the statement that the asymptotic $z$-divisor $M_z$ 
contains the point $\theta=\frac{r-1}r$. Recall that its other points are the $q$-th powers of roots of a finite 
collection of polynomials $R_{p,q,c(i)}$. See the Addendum to Proposition \ref{propasy}.

We will deduce Theorem \ref{puiseux} from the following  proposition.
 
\begin{proposition} \label{port} Let  $p,q\in\nn$, $1\leq q<p$, $r=\frac pq$, 
$$W(z)=R_{p,q,c}(z)=cz^p-rz^q+r-1, \ \phi=\left(\frac{r-1}r\right)^{\frac1q}.$$ 
The polynomial $W(z)$ has a real root  $z>\phi$, if and only if 
$0<c\leq1$. In this case it has a pair of  roots $z_0=z_0(c)$ and $z_1=z_1(c)$ in the interval 
$(\phi,+\infty)$ that are separated by one, 
if $0<c<1$, and  both equal to 1, if $c=1$: 
\begin{equation}\phi<z_0(c)<1<z_1(c), \ \text{ whenever } 0<c<1.\label{ineqrrr}\end{equation}
The functions $z_0(c)$ and $z_1(c)$ of $c\in(0,1)$ are strictly increasing (decreasing) homeomorphisms of 
the interval $(0,1)$ onto $(\phi,1)$ (respectively, $(1,+\infty)$). 
\end{proposition}

\begin{proof} For $c\notin\rr_{+}$ one has $W|_{\{z>\phi\}}\neq0$, since $-rz^q+r-1<0$ for every $z>\phi$. Therefore, 
we consider that $c>0$. 
The derivative equals to $W'(z)=cpz^{p-1}-rqz^{q-1}=pz^{q-1}(cz^{p-q}-1)$. Therefore, $c^{-\frac1{p-q}}$ is the unique 
local extremum of the polynomial $W$ in the positive semiaxis, and it is 
obviously a local minimum. For $c=1$ one has $W(1)=0$, and $z=1$ is exactly the  minimum. 
Therefore, as $c$ increases, the graph of the polynomial $W$ becomes disjoint from the positive coordinate semiaxis, and 
it has no positive root, if $c>1$. As the positive $c$ decreases from 1 to 0, the graph  intersects 
the coordinate axis on both sides from 1 at two points $z_0(c)$ and $z_1(c)$ separated by the minimum and by 1, 
$\phi<z_0(c)<1<z_1(c)$; $z_0(c)$ 
moves to the left, and $z_1(c)$ moves to the right. This follows from Proposition \ref{prneq} (which 
 implies that $z_0(c)\neq\phi$, hence $z_0(c)$ remains greater than $\phi$) and the inequality 
 $W'(z_0(c))<0<W'(z_1(c))$ (which holds, since 
 the points $z_0(c)$ and $z_1(c)$ lie on different sides from the minimum). 
The root $z_1(c)$ cannot disappear to infinity before $c$ will arrive to 0, since $W(z)\to+\infty$, as $z\to+\infty$, for every fixed $c>0$. 
The above discussion implies that the functions 
 $z_0(c)$ and  $z_1(c)$ are strictly increasing (respectively, decreasing) continuous 
 mappings from $(0,1)$ to $(\phi,1)$ and $(1,+\infty)$ respectively. These mappings are homeomorphisms "onto", since 
  each point $x\in(\phi,+\infty)$ is a 
 root of a polynomial $R_{p,q,c}$ with $c=\frac{rx^q-r+1}{x^p}>0$, and one has $c\leq1$, as was shown above. 
This implies the statements of   Proposition \ref{port}. \end{proof}

\begin{proof} {\bf of Theorem \ref{puiseux}.} Suppose the contrary. Then the asymptotic $z$-divisor $M_z$ contains 
the point $\theta=\frac{r-1}r$, as was noted after Theorem \ref{puiseux}. There exists a strictly decreasing homeomorphism $J:[1,+\infty)\to(\theta,1]$ 
such that $J(z_1^q(c))=z_0^q(c)$ for every $c\in(0,1]$, by Proposition \ref{port}. 
Set 
$$\sigma(z):=z^{-1}, \ \beta:=J\circ\sigma.$$ 
The mapping $\beta$ is a strictly increasing mapping $[\theta,1]\to(\theta,1]$, and  $\beta(\theta)=
J(\theta^{-1})\in(\theta,1)$. Hence, the iterates $\beta^n(\theta)\in(\theta,1)$ form 
an infinite increasing sequence of points. All of them  lie in $M_z$,  
by $\sigma$-symmetry of the divisor $M_z$ (Proposition \ref{inva}), the inclusion $\theta\in M_z$  and the fact that 
the points in $M_z$ different from $\theta$ are exactly $q$-th powers of roots  
of a finite collection of polynomials $W_i=R_{p,q,c(i)}$ (the Addendum to Proposition \ref{propasy}). Indeed, if a point 
$\zeta\in[\theta,1)$ lies 
in $M_z$, then $\sigma(\zeta)\in(1,+\infty)\cap M_z$, by symmetry. Hence, $\sigma(\zeta)$ is  
a $q$-th power of root of some polynomial $W_i$. But we already know that the number $(\sigma(\zeta))^{\frac1q}>1$ 
is a root of a real polynomial $W_0=R_{p,q,c_0}$ with $0<c_0<1$ (Proposition \ref{port}). 
This implies that the ratio of the numbers $c_0$ and $c(i)$ is a $\frac pq$-th power of unity, and the polynomials 
$W_i$ and $W_0$ have the same collection of $q$-th powers of roots. But then $\beta(\zeta)=J(\sigma(\zeta))\in(\theta,1)$ 
is a $q$-th power of root of the same polynomial $W_0$, or equivalently, $W_i$, hence, $\beta(\zeta)\in M_z$. 
Finally, the finite divisor $M_z$ contains an infinite sequence of points $\beta^n(\theta)$. The contradiction thus obtained 
proves Theorem \ref{puiseux}.
\end{proof}

\subsection{Concentration of intersection index}

In the condition of statement (ii-b) of Theorem \ref{iii} let $\Delta$  be the zero divisor of a rational integral of the 
$\ii$-angular billiard generated by  $\gamma$; we normalize $\Delta$ by 
positive  rational factor so that $b$ is included in $\Delta$ 
with multiplicity one. 
Here we prove the following theorem implying that more than 
one half of  the intersection index  $(\Delta,T_Cb)$ 
is concentrated at the base point $C$. 
 
\begin{theorem} \label{intloc}  Let $b\subset\cp^2$ be a nonlinear irreducible germ of analytic curve at a point $C$.  Let $(z,w)$ be affine coordinates centered at $C$ and adapted to $b$. Let $b$
 have local relative projective symmetry property of type A-$z$ with respect to an {\bf effective} $(T_Cb,C)$-local divisor 
$\Delta=\sum_{j=1}^Nk_jb_j$, i.e., $k_j>0$. Let $\Delta$   include the germ $b$ with coefficient 1. 
Set $D=\deg(\Delta)$: this is the intersection index  $(\Delta,T_Cb)$. 
Then the local intersection index of the projective tangent line $T_Cb$ with  
$\Delta$  at $C$ is no less than $\frac{D}2+1$.  
The equality may take place only in the case, when 
the germ $b$ is quadratic and regular, and $\Delta$ contains no other germs tangent to $b$ at $C$ with the same Puiseux exponent, 
as $b$. 
\end{theorem}

\begin{proof} 
Everywhere below for any effective divisor $\mcd=\sum_jn_j[\tau_j]$ on $\cc$, $n_j>0$, we denote by $|\mcd|$ its degree: $|\mcd|=\sum_jn_j$. 
For every $u\in b$ close to $C$ let $\Chi=\Chi(u)$ denote the part of the divisor $T_ub\cap\Delta$ on $T_ub$ consisting of those 
its points that tend to $C$, as $u\to C$. Let $\Psi(u)$ denote the remaining part 
of the divisor $T_ub\cap\Delta$, consisting of those its points that do not tend to $C$: 
they tend to the other base points of the germs in $\Delta$. 
The local intersection index $(T_Cb, \Delta)_C$ at the point $C$ 
equals to the degree $|\Chi(u)|$ of the divisor $\Chi(u)$, whenever $u$ is close enough to $C$. 

Let $\Chi_1=\Chi_1(u)$ and $\Chi_0=\Chi_0(u)$ denote the parts of the divisor $\Chi(u)$ formed respectively by the points with 
linear $z$-asymptotics and the points that do not have linear $z$-asymptotics. 

Recall that the divisors $T_ub\cap\Delta$ 
are invariant under projective involutions $\sigma_u:T_ub\to T_ub$ fixing $u$ 
and forming a family of type A in the coordinate $z$. 

{\bf Claim 1.} {\it The involution $\sigma_u$  sends the points of the divisor $\Psi(u)$ 
 to some points in $\Chi_0(u)$, and $|\Chi_0(u)|\geq|\Psi(u)|$.}

\begin{proof} The involutions $\sigma_u$ written in the coordinate $z$
converge to the constant mapping $\oc\mapsto 0$ uniformly on compact subsets in $\oc\setminus\{0\}$, as $u\to C$, by 
Proposition \ref{invdeg} (a). Therefore, the image of a point converging to a limit distinct from $C$, as 
$u\to C$, is a point converging to $C$. This implies that each point of the divisor $\Psi(u)$ is sent to a point in $\Chi(u)$. Its image in $\Chi(u)$ 
cannot lie in $\Chi_1(u)$, since the divisor $\Chi_1(u)$ of points with linear $z$-asymptotics is 
$\sigma_u$-invariant, by  
Proposition \ref{invdeg} (b). 
Hence, $\sigma_u$ sends $\Psi(u)$ to a part of the divisor 
$\Chi_0(u)$. This  proves the claim. 
\end{proof}

Thus, one has  
$$\Delta\cap T_ub=\Chi_0(u)+\Chi_1(u)+\Psi(u), \ \ |\Chi_0(u)|\geq|\Psi(u)|,$$
$$|\Chi_0(u)|+\frac12|\Chi_1(u)|\geq\frac{|\Chi_0(u)|+|\Chi_1(u)|+|\Psi(u)|}2=\frac12|\Delta\cap T_ub|=\frac D2.$$
This implies that 
\begin{equation}(T_Cb,\Delta)_C=|\Chi(u)|=|\Chi_0(u)|+|\Chi_1(u)|\geq\frac D2+\frac12|\Chi_1(u)|.\label{dhalf}\end{equation}
One has $|\Chi_1(u)|\geq2$. Indeed, the  divisor $\Chi_1(u)$ of points with linear $z$-asymptotics 
 includes the intersection $b\cap T_ub$ (which has degree at least two) with coefficient one and  the intersections 
 (with positive coefficients)
of the line $T_ub$ with those germs in $\Delta$ that are tangent to $b$ and have the same Puiseux exponent $r=r_b$. The equality may take place only if $b$ is regular and quadratic and there are no additional latter germs. 
This together with (\ref{dhalf}) implies that $(T_Cb,\Delta)_C\geq\frac D2+1$ and proves Theorem \ref{intloc}. 
\end{proof}

\subsection{Exponent in the asymptotics of Bialy--Mironov Formula. 
Proof of statement (ii-b)} 

Let $b$ be a local branch of the curve $\gamma$ at a point $C\in\gamma\cap\ii$ that 
 is a regular 
point of the conic $\ii$, and let $b$ be transversal to $\ii$. Let $\sum_{j=1}^lk_jb_j$, $b_1=b$, $k_1=1$, be 
the germ at $C$ of the divisor $\frac1k\Delta$, see (\ref{deldeg}); here $k_j>0$ for all $j$. Let $\rho_{b_j}$ and $\eta$ be the corresponding constants from formulas (\ref{roa}) and (\ref{nuprec}) respectively. Let us show that the upper 
bound (\ref{ineta}) on the number $\eta$ proved in Subsection 3.4 cannot hold, unless $b$ is regular and quadratic. 
Indeed, let $(z,w)$ be affine coordinates adapted to $b$. The branch 
$b$ has local relative projective symmetry property of type A-$z$, by Proposition \ref{typin}, Case 1). 
Therefore,  one has: 

 $r=r_b\leq2$, by Theorem \ref{subq}; 
 
$\rho_{b_j}\leq r$ for all $j=1,\dots,l$, by Theorem \ref{puiseux}.

Substituting these inequalities to formula (\ref{nuprec}), one gets 
\begin{equation} \eta=3\sum_{j=1}^lk_jq_{b_j}\min\{\rho_{b_j},r\}-2(r+1)\geq3\sum_{j=1}^lk_jq_{b_j}\rho_{b_j}-6.
\label{etanew}\end{equation}
The sum in the  right-hand side in (\ref{etanew}) equals to the local intersection index of the 
divisor $\frac1k\Delta$ with $T_Cb$ at the point $C$, by definition. The latter local intersection index is no 
less than $\frac{\deg(\Delta)}{2k}+1$, by Theorem \ref{intloc}. Therefore, 
$$\eta\geq3(\frac{\deg(\Delta)}{2k}+1)-6=3\frac{\deg(\Delta)}{2k}-3.$$
The latter inequality is strict, unless the local branch $b$ is regular and quadratic, as in Theorem \ref{intloc}. 
The strict inequality would obviously contradict inequality (\ref{ineta}), and hence, $b$ is regular and quadratic. 
Statement (ii-b) is proved. The proof of Theorem \ref{iii} is complete.

\section{Generalized genus and Pl\"ucker formulas. Proof of Theorem \ref{thalg2}} 
The proof of Theorem  \ref{thalg2} is based on generalized Pl\"ucker and genus formulas for planar algebraic curves and their corollaries, see, e.g.,  \cite[subsection 4.1]{gs}. It  
is done by a modified version of Eugenii Shustin's arguments from \cite[subsection 4.2]{gs}. 
 The main observation is that the assumptions of Theorem \ref{iii}
 on the  Puiseux exponents of local branches of the curve and Pl\"ucker formulas yield that the singularity 
 invariants of the considered curve $\gamma$ must obey a relatively high lower bound. On the other hand, the contribution 
 of its  potential singular (inflection) points, which lie in the conic $\ii$, appears to be not sufficient to fit that lower bound, unless 
 the curve is a conic. 
 
 \subsection{Invariants of plane curve singularities}
 The material of the present subsection is contained in \cite[subsection 4.1]{gs}. It recalls classical results on invariants of 
 singularities presented in 
  \cite[Chapter III]{BK}, \cite[\S10]{Mi}, see also a modern exposition in \cite[Section I.3]{GLS}. 
 Let $\gamma\subset\cp^2$ be a non-linear irreducible 
 algebraic curve\footnote{Everything stated in the present subsection holds for every algebraic curve in $\cp^2$ 
 with no multiple components and no straight-line components, see \cite[theorem 1]{sh}.}.   
 Let $d$ denote its degree. 
The intersection index of the curve $\gamma$ with its Hessian $H_{\gamma}$ 
equals to $3d(d-2)$, by B\'ezout Theorem. On the other hand, it is equal to the sum of the contributions $h(\gamma,C)$, which are called 
the {\it Hessians of the germs} $(\gamma,C)$, through all 
the singular and inflection points $C$ of the curve $\gamma$:
\begin{equation}3d(d-2)=\sum_{C\in\gamma}h(\gamma,C).\label{hesin}\end{equation} 
 An explicit formula for the Hessians $h(\gamma,C)$ was found 
in \cite[formula (2) and theorem 1]{sh}. To recall it, let us introduce the following notations. For every local 
branch $b$ of the curve $\gamma$ at $C$ let $s(b)$ denote its multiplicity: its intersection index with a generic line through $C$. 
Let $s^*(b)$ denote the analogous multiplicity of the dual germ. Note that 
$$s(b)=q,  \ \ s^*(b)=p-q,$$
where $p$ and $q$ are the exponents in the parametrization $t\mapsto(t^q,c_bt^p(1+o(1)))$ of the local branch 
$b$ in adapted 
coordinates. Thus,

\begin{equation} s(b)=s^*(b) \text{ if and only if } b \text{ is quadratic,}\label{quit}\end{equation}
\begin{equation} s(b)\geq s^*(b) \text{ if and only if } b \text{ is subquadratic.}\label{quit2}\end{equation}

Let 
$b_{C1},\dots,b_{Cn(C)}$ denote the local branches of the curve $\gamma$ at $C$; here $n(C)$ denotes their number. 
The above-mentioned formula for $h(\gamma,C)$ 
from \cite{sh} has the form 
\begin{equation} h(\gamma,C)=3\kappa(\gamma,C)+\sum_{j=1}^{n(C)}(s^*(b_{Cj})-s(b_{Cj})),\label{hessa}\end{equation} 
where $\kappa(\gamma,C)$ is the $\kappa$-invariant, the class of the singular point. 
Namely, consider the germ of function $f$ defining the germ $(\gamma,C)$; $(\gamma,C)=\{ f=0\}$. Fix a line $L$ 
through $C$ that is transversal to all the local branches of the curve $\gamma$ at $C$. Fix a small ball  $U=U(C)$ 
centered at $C$ and consider a level curve $\gamma_{\var}=\{ f=\var\}\cap U$ with small $\var\neq0$, which is non-singular. 
The number 
$\kappa(C)=\kappa(\gamma,C)$ is the number of  points of the curve $\gamma_{\var}$ where its tangent line is parallel to $L$.  (One has $\kappa(C)=0$ 
for nonsingular points $C$.) 
It is well-known that 
\begin{equation}\kappa(\gamma,C)=2\delta(\gamma,C)+\sum_{j=1}^{n(C)}(s(b_{Cj})-1),\label{kap}\end{equation}
see, for example, \cite[propositions I.3.35 and I.3.38]{GLS}, where 
 $\delta(\gamma,C)=\delta(C)$ is the $\delta$-invariant. Namely, consider the curve $\gamma_{\var}$, which is a Riemann surface whose  boundary is a finite collection of 
closed curves: their number equals to $n(C)$. Let us take the 2-sphere 
with $n(C)$  deleted disks. Let us paste it to $\gamma_{\var}$: this yields to a compact surface. By definition, 
its genus is the $\delta$-invariant $\delta(C)$. 
One has $\delta(C)\geq0$, and $\delta(C)=0$ whenever  $C$ is a non-singular point.   Hironaka's genus formula \cite{hir} implies that 
\begin{equation}\sum_{C\in\Sing(\gamma)}\delta(\gamma,C)\leq\frac{(d-1)(d-2)}2.\label{d<d}\end{equation}
Formulas (\ref{hesin}), (\ref{hessa}) and  (\ref{kap}) together imply that 
$$3d(d-2)=6\sum_C\delta(\gamma,C)+3\sum_C\sum_{j=1}^{n(C)}(s(b_{Cj})-1)+\sum_C\sum_{j=1}^{n(C)}(s^*(b_{Cj})-s(b_{Cj})).$$
The first term in the latter right-hand side is no greater than $3(d-1)(d-2)$, by inequality (\ref{d<d}). This implies that 
$$ 3d(d-2)-3(d-1)(d-2) $$
\begin{equation}=3(d-2)\leq3\sum_C\sum_{j=1}^{n(C)}(s(b_{Cj})-1)+\sum_C\sum_{j=1}^{n(C)}(s^*(b_{Cj})-s(b_{Cj})).
\label{ind0}\end{equation}

\subsection{Proof of Theorem \ref{thalg2} for a union $\ii$  of two lines} 

Let $\ii$ be a union of two distinct lines $\La_1$ and $\La_2$ through the point $O$. 
We know that all the singular and inflection points of the curve $\gamma$ (if any) lie in  $\ii=\La_1\cup\La_2$.  Set 
$$\mcb_{tan}=\{\text{the local branches of  } \gamma \text{ at points } C\in\ii\setminus\{ O\} \text{ tangent to  } \ii\},$$
$$\mcb_{O,tr}=\{\text{the  branches of the curve } \gamma \text{ at  } O \text{ transversal to both  } \La_1, \ 
\La_2\},$$
$$\mcb_{O,tan,j}=\{\text{the branches of the curve } \gamma \text{ at  } O \text{ tangent to  } \La_j\},$$
$$\mcb_{O,tan}=\sqcup_{j=1,2}\mcb_{O,tan,j}, \ \mcb_O=\mcb_{O,tr}\sqcup\mcb_{O,tan}.$$

All the local  branches $b\notin \mcb_{O,tan}$  of the curve $\gamma$ at points in $\gamma\cap\ii$
 are subquadratic, by the conditions of Theorem \ref{thalg2}. 
 Therefore, their  contributions $s^*(b)-s(b)$ 
to the right-hand side in (\ref{ind0}) are non-positive, by (\ref{quit2}). Every local branch $b\notin(\mcb_{tan}\cup\mcb_O)$ 
is  regular, by assumption, hence its contribution $s(b)-1$ to (\ref{ind0}) vanishes. This together with (\ref{ind0}) implies that 
$$d-2\leq \sum_{b\in\mcb_{tan}\cup\mcb_{O,tr}\cup\mcb_{O,tan}}(s(b)-1)+\frac13\sum_{b\in\mcb_{O,tan}}(s^*(b)-s(b))$$
\begin{equation}= \sum_{b\in\mcb_{tan}\cup\mcb_{O,tr}\cup\mcb_{O,tan}}s(b)-|\mcb_{tan}|-|\mcb_{O,tr}|-|\mcb_{O,tan}|+\frac13\sum_{b\in\mcb_{O,tan}}(s^*(b)-s(b)),\label{ineqs}\end{equation}
where $|\mcb_{s}|$, $s\in\{ tan, (O,tr), (O,tan)\}$ are the cardinalities of the sets $\mcb_s$. 

Let us estimate the right-hand side in (\ref{ineqs}) from above. To do this, we use the next equality, 
which follows from B\'ezout Theorem. 

In what follows 
for every $j=1,2$ by $\mcb_{reg,j}$ we denote the collection of the local branches of the curve $\gamma$ at 
points in $\La_j\setminus\{ O\}$ that are transversal to $\La_j$. Recall that they are regular, by assumption. 
Set
$$\nu_j=|\mcb_{reg,j}|,$$
$$\mcb_{tan,j}=\{ b\in\mcb_{tan} \ | \ b \text{ is tangent to } \La_j\}, \ \mcb_{tan}=\mcb_{tan,1}\sqcup\mcb_{tan,2}.$$

{\bf Claim 1.} {\it   For every $j=1,2$ one has} 
$$\sum_{b\in\mcb_{tan,j}}s(b)+\frac12\sum_{b\in\mcb_{O,tan,3-j}}s(b)+\frac12\sum_{b\in\mcb_{O,tr}}s(b)$$
  \begin{equation}+\frac{\nu_j}2+\frac12\sum_{b\in\mcb_{O,tan,j}}(s^*(b)+s(b))=\frac d2.\label{ind}\end{equation} 

\begin{proof} The intersection index of the curve $\gamma$ with each line 
  $\Lambda_j$ equals to $d$ (B\'ezout Theorem). It is the sum of the  intersection indices of the line $\La_j$ with the branches from 
  the collections $\mcb_{tan,j}$, $\mcb_{O,tr}$, $\mcb_{O,tan}$, $\mcb_{reg,j}$.  Let us calculate the latter indices.  
  The contribution of each branch from $\mcb_{reg,j}$ equals to one, by regularity 
  and transversality. The  intersection index  of each branch $b\in\mcb_{O,tr}$ with $\La_j$ equals to $s(b)$. The intersection index 
  with $\La_j$  of 
each branch $b\in\mcb_{tan,j}$ equals to $p_b=2s(b)$, by quadraticity  (condition of Theorem \ref{thalg2}).  The intersection index with $\La_j$ 
 of each branch $b\in\mcb_{O,tan,j}$ equals to $p_b=s(b)+s^*(b)$. The remaining branches 
 $b\in\mcb_{O,tan,3-j}$ are transversal to $\La_j$, and their intersection indices with $\La_j$ are equal to $s(b)$.   Summing up the above intersection indices, writing that their sum should be equal to $d$ and dividing the 
equality thus obtained by two yields to (\ref{ind}). 
\end{proof} 

 Summing up equalities (\ref{ind}) for both  $j=1,2$ yields to 
  \begin{equation}\sum_{b\in\mcb_{tan}\cup\mcb_{O,tr}\cup\mcb_{O,tan}}s(b)= d-\frac12\sum_{b\in\mcb_{O,tan}}s^*(b)-
  \frac{\nu_1+\nu_2}2.\label{bsb}\end{equation}
  
  Substituting equality (\ref{bsb}) to (\ref{ineqs}) together with elementary inequalities yields to 
  $$d-2\leq d-\frac12\sum_{b\in\mcb_{O,tan}}s^*(b)-\frac{\nu_1+\nu_2}2-|\mcb_{tan}|-|\mcb_{O,tr}|-|\mcb_{O,tan}|$$
  $$+\frac13\sum_{b\in\mcb_{O,tan}}(s^*(b)-s(b))=d-|\mcb_{tan}|-|\mcb_{O,tr}|-|\mcb_{O,tan}|$$
  $$-\frac{\nu_1+\nu_2}2-\sum_{b\in\mcb_{O,tan}}(\frac16s^*(b)+\frac13s(b)),$$
  \begin{equation}|\mcb_{tan}|+|\mcb_{O,tr}|+|\mcb_{O,tan}|+\frac{\nu_1+\nu_2}2+\sum_{b\in\mcb_{O,tan}}(\frac16s^*(b)+\frac13s(b))\leq2.\label{bi}\end{equation}

{\bf Claim 2.} {\it The total cardinality of the set of singular and inflection points of the curve $\gamma$ is at most two. 
There are two possible cases:

- either there are no inflection points, and each local branch of the curve $\gamma$ at every singular point is subquadratic; 

- or there is just  one  special point (singular or inflection point), and $\gamma$ has one local branch at it.}
 
\begin{proof} Let $\Phi$ denote the collection of all the local branches of the curve $\gamma$ at points in $\ii$. Recall that 
$\ii$ contains all the singular and inflection points of the curve $\gamma$.

Case 1): $\mcb_{O,tan}=\emptyset$. Then all the local branches in $\Phi$ are subquadratic,  and there are no 
inflection points; $|\mcb_{tan}|+|\mcb_{O,tr}|\leq 2$, by (\ref{bi}). 

Subcase 1.1): $\mcb_{tan}=\mcb_{O,tr}=\emptyset$. Then  all the branches in $\Phi$ 
are regular and quadratic, and there are at most four of them: $\nu_1+\nu_2\leq4$, by (\ref{bi}). Thus, the 
 only possible candidates to be singular points of the curve $\gamma$ are 
intersections of  branches. Since the total number of branches under question is 
 at most four, the number of singular points is at most two. 

Subcase 1.2): $|\mcb_{tan}|+|\mcb_{O,tr}|=1$. The  branches from the complement $\Phi\setminus(\mcb_{tan}\cup\mcb_{O,tr})$ 
are transversal to the lines $\La_j$, quadratic and regular,  and there are at most two of them: $\nu_1+\nu_2\leq2$, by 
(\ref{bi}). Thus, $\Phi$ consists of at most three branches, and at most one of them is singular. Thus, the 
 only possible candidates to be singular points of the curve $\gamma$ are the base point of the unique branch from 
 $\mcb_{tan}\cup\mcb_{O,tr}$ and a point of intersection of  quadratic regular branches  (if it is different from the latter base point). 
Finally, we have at most two singular points. 

Subcase 1.3): $|\mcb_{tan}|+|\mcb_{O,tr}|=2$. Then $\Phi=\mcb_{tan}\cup\mcb_{O,tr}$,  by (\ref{bi}), the number of base points of 
the branches from the collection $\Phi$ is at most 2, and they are the only potential singular points.

Case 2):  $|\mcb_{O,tan}|\geq1$. Then $|\mcb_{O,tan}|=1$, and $\Phi=\mcb_{O,tan}$. This follows from inequality (\ref{bi}) and 
positivity of the sum in $b\in\mcb_{O,tan}$ in its left-hand side. Thus, the set 
$\Phi$ consists of just one branch, and we have at most one singular (or inflection) point. The claim is proved.
 \end{proof}

\begin{theorem} \label{af} \cite[theorem 1.6]{gs}. Let  $\gamma\subset\cp^2$ be an irreducible algebraic curve such that there exists 
a projective line $L$ satisfying the following statements: 

- all the singular and inflection points of the curve $\gamma$ (if any) lie in $L$; 

- each local branch of the curve $\gamma$ at every point of the intersection $\gamma\cap L$ that is transversal to $L$ is subquadratic. 

Then $\gamma$ is a conic.
\end{theorem}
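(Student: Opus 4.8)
The plan is to reduce the statement to the inequality (\ref{ind0}) and the genus bound (\ref{d<d}), exactly as in the proof just given for the case when $\ii$ is a union of two lines, but now working with a single line $L$. First I would set up the same partition of local branches: let $\Phi$ denote the collection of all local branches of $\gamma$ at points of $\gamma\cap L$, let $\mcb_{tan}$ be those tangent to $L$ (with base point anywhere on $L$), and let $\mcb_{tr}$ be those transverse to $L$. By hypothesis every branch in $\mcb_{tr}$ is subquadratic, so $s^*(b)\le s(b)$ for $b\in\mcb_{tr}$ by (\ref{quit2}); and every branch not in $\Phi$ is a smooth point with no inflection, contributing nothing to (\ref{ind0}). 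Thus (\ref{ind0}) collapses to a bound involving only branches in $\Phi$.

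Next I would exploit B\'ezout: the intersection index of $\gamma$ with $L$ equals $d$, and it decomposes as $\sum_{b\in\mcb_{tr}} s(b) + \sum_{b\in\mcb_{tan}}(b,L)$, where for $b\in\mcb_{tan}$ one has $(b,L)=p_b=s(b)+s^*(b)\ge 2s(b)$. This gives
\begin{equation}
d = \sum_{b\in\mcb_{tr}} s(b) + \sum_{b\in\mcb_{tan}}(s(b)+s^*(b)).\label{afbez}
\end{equation}
Combining (\ref{afbez}) with the reduced form of (\ref{ind0}) — in which the terms $s(b)-1$ and the sign-controlled terms $\tfrac13(s^*(b)-s(b))$ appear — I expect to obtain, after clearing denominators, an inequality of the shape $|\Phi| + (\text{a nonnegative combination of the }s(b),\,s^*(b)) \le \text{small constant}$, forcing $|\Phi|$ and all the multiplicities to be as small as possible. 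Concretely I anticipate that $\gamma$ can have at most one branch at a point of $L$ that is not a transverse smooth point, and that branch, if present, must in fact be transverse-subquadratic or else $\gamma\cap L$ consists of very few points; tracking this carefully should show the curve has no singular or inflection points at all except in degenerate low-degree situations, whence $d\le 2$ and $\gamma$ is a conic (a line being excluded since $\gamma$ is assumed non-linear, or absorbed trivially).

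The main obstacle, as in the two-line case, will be the bookkeeping of the B\'ezout decomposition when branches are tangent to $L$: the term $s^*(b)$ for a tangent branch enters (\ref{afbez}) with a plus sign but enters the Pl\"ucker inequality (\ref{ind0}) through $s^*(b)-s(b)$, and one must check that the net coefficient of $s^*(b)$ after substitution is favorable (nonnegative) so that it cannot be used to inflate $d$ without also inflating the left-hand side. A second delicate point is handling several branches at the \emph{same} point of $L$: there the $\delta$-invariant contributes extra intersection, and one must verify that the genus bound (\ref{d<d}) still leaves no slack. I would organize the argument by the cardinality $|\mcb_{tan}| + |\mcb_{tr}|$, treating the cases $0$, $1$, $2$ separately just as in Claim 2 above, and in each case exhibiting that the only curves satisfying all constraints are conics. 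Since this is precisely the specialization of the two-line argument to the case $\La_1=\La_2=L$, I expect no genuinely new idea is needed — only a careful rerun of the same estimates — and the theorem should follow; this is why it is cited from \cite{gs} rather than reproved here.
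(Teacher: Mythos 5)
You should first be aware that the paper contains no proof of Theorem \ref{af}: it is imported as a black box from \cite[theorem 1.18]{gs} and serves as the \emph{terminal} ingredient of the proof of Theorem \ref{thalg2}. This matters for your plan, because the two-line argument of Subsection 5.2 that you propose to specialize does not conclude on its own -- it reduces the situation to a configuration with at most two special points lying on a line, with controlled branches, and then \emph{invokes} Theorem \ref{af} to finish. Rerunning those estimates with $\La_1=\La_2=L$ therefore lands you exactly where Theorem \ref{af} is needed, not where it is proved; as written, your argument is circular. Concretely, your bookkeeping is sound as far as it goes: restricting (\ref{ind0}) to the set $\Phi$ of branches based on $L$, using $s^*(b)\le s(b)$ for the transverse (subquadratic) branches, and substituting the B\'ezout identity $d=\sum_{b\in\mcb_{tr}}s(b)+\sum_{b\in\mcb_{tan}}(s(b)+s^*(b))$ yields $3|\Phi|+\sum_{b\in\mcb_{tan}}(2s^*(b)+s(b))\le6$. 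But this bounds only the \emph{number} of branches of $\gamma$ met along $L$ (by two), not the degree $d$. Your sentence ``tracking this carefully should show the curve has no singular or inflection points at all \dots whence $d\le2$'' is precisely the content of the theorem and does not follow from what precedes it: for instance, the inequality leaves alive the configuration of two transverse, exactly quadratic, unibranch points of equal multiplicity $s$ on $L$, with $d=2s$ and $\delta_1+\delta_2=(2s-1)(s-1)$ (a rational curve), and nothing in (\ref{ind0}) or (\ref{d<d}) as you have used them excludes this for $s\ge2$.

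To close the argument genuinely new input is required. One elementary route: (a) if a single transverse branch $b$ survives, B\'ezout applied to its tangent line gives an intersection multiplicity $p_b=s(b)+s^*(b)>s(b)=d$ at the base point, impossible for $d\ge2$; (b) if two transverse branches survive, (\ref{ind0}) forces both to be exactly quadratic and (\ref{d<d}) to be an equality, B\'ezout with their tangent lines forces $s_1=s_2=s$ and $d=2s$, and then one needs the lower bound $\delta\ge s(s-1)$ for a unibranch quadratic germ of multiplicity $s$ (its semigroup meets $(0,2s)$ only in $s$, and a plane-branch semigroup with these constraints has at least as many gaps as $\langle s,2s+1\rangle$), whence $\delta_1+\delta_2\ge2s(s-1)>(2s-1)(s-1)$ unless $s=1$; the tangent-branch case is handled similarly. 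These additional steps -- or the finer analysis actually carried out in \cite{gs} -- are the real substance of the statement, and they are absent from your proposal.
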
 

There exists a line $L$ satisfying the conditions of Theorem \ref{af} for  the curve $\gamma$ under consideration. 
Namely, in the first case of Claim 2 the line $L$ is the line passing though (at most two) singular points of the curve $\gamma$. 
In the second case we choose $L$ to be the tangent line to the unique local branch at the unique special point. 
This together with Theorem \ref{af} implies that $\gamma$ is a conic.  Theorem \ref{thalg2} is proved.

\subsection{Proof of Theorem \ref{thalg2}: case, when $\ii$ is a regular conic}

Let $\ii\subset\cp^2$ be a regular conic, and let $\gamma\subset\cp^2$ be an irreducible algebraic curve, 
$\gamma\neq\ii$, $d=\deg\gamma$. Let $\mcb_{tr}$, 
$\mcb_{tan}$ denote respectively the set of those local branches of the curve $\gamma$ at base points in 
$\gamma\cap\ii$ 
that are transversal (respectively, tangent) to $\ii$. Let $|\mcb_{tr}|$, $|\mcb_{tan}|$ denote their cardinalities.

The proof of Theorem \ref{thalg2} in the case under consideration is based on the following inequality. 

\begin{proposition} Let $\ii$, $\gamma$, $d$ be as above. Let each local branch in $\mcb_{tan}$ be 
quadratic, and each branch in $\mcb_{tr}$ be regular. Then 
\begin{equation}\frac12|\mcb_{tr}|+\sum_{b\in\mcb_{tan}}s(b)\leq d.\label{led}\end{equation}
\end{proposition}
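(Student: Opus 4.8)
The plan is to derive inequality (\ref{led}) from B\'ezout's theorem applied to the two distinct irreducible curves $\gamma$ and $\ii$. Since $\deg\ii=2$, the total intersection index $(\gamma,\ii)$, counted with local multiplicities, equals $2d$. I would then decompose this total as the sum, over all points $C\in\gamma\cap\ii$ and all local branches $b$ of $\gamma$ at $C$, of the local intersection indices $(b,\ii)_C$, and note that under the hypotheses every such branch lies in $\mcb_{tr}$ or in $\mcb_{tan}$. The argument then reduces to a lower bound for $(b,\ii)_C$ in each of these two cases.

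The case $b\in\mcb_{tr}$ is immediate: $b$ is regular and transverse to $\ii$, which is smooth at $C$, so $(b,\ii)_C=1$. For $b\in\mcb_{tan}$ the branch $b$ is quadratic and tangent to $\ii$ at $C$, and here I would work in affine coordinates $(z,w)$ centered at $C$ and adapted to $b$, so that $T_Cb$ is the $z$-axis; tangency then makes the $z$-axis tangent to $\ii$ as well. Since a regular conic has no inflection points, its contact with $T_C\ii$ at $C$ has order exactly two, so near $C$ the conic is the graph $\{w=az^2+O(z^3)\}$ with $a\neq0$. Substituting the parametrization (\ref{curve}) of $b$, for which $p_b=2q_b$ by quadraticity, into the function $w-az^2-\cdots$ defining $\ii$ gives $(c_b-a)t^{2q_b}+o(t^{2q_b})$, hence $(b,\ii)_C\geq 2q_b=2s(b)$, using $s(b)=q_b$. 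Summing $(b,\ii)_C\geq1$ over $\mcb_{tr}$ and $(b,\ii)_C\geq 2s(b)$ over $\mcb_{tan}$, and comparing with $(\gamma,\ii)=2d$, yields $2d\geq|\mcb_{tr}|+2\sum_{b\in\mcb_{tan}}s(b)$, which is (\ref{led}).

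I expect no serious obstacle; the one point where the hypotheses are genuinely used is the estimate $(b,\ii)_C\geq 2s(b)$ for tangent branches, which relies on $\ii$ having exact contact order two with its tangent line at $C$ (failure of inflection) together with $b$ being \emph{quadratic}, i.e.\ $r_b=2$. A merely subquadratic tangent branch with $r_b<2$ would only give $(b,\ii)_C=p_b<2s(b)$, so quadraticity cannot be weakened in this step. Everything else is routine B\'ezout bookkeeping.
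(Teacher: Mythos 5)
Your proof is correct and follows essentially the same route as the paper: B\'ezout gives total intersection index $2d$, transverse regular branches contribute $1$, and tangent quadratic branches contribute at least $2s(b)$. The only cosmetic difference is that the paper establishes the bound $(b,\ii)_C\geq 2s(b)$ by rectifying the germ of the conic and invoking the tangent-line intersection formula $r\,s(b)$ with $r\geq2$, whereas you substitute the Puiseux parametrization directly into the local graph equation of $\ii$ — the same computation in different clothing.
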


\begin{proof} The intersection index of the curves $\gamma$ and $\ii$ equals to $2d$ (B\'ezout Theorem). 
On the other hand, it equals to the sum of intersection indices of the 
conic $\ii$ with the local branches from the collections $\mcb_{tr}$ and $\mcb_{tan}$. Each branch in $\mcb_{tr}$ 
has intersection index one with $\ii$, since it is regular and transversal to $\ii$, by 
assumptions. Each branch $b\in\mcb_{tan}$ has 
intersection index at least $2s(b)$ with $\ii$. Indeed, $b$ is quadratic,  as is the branch of the conic $\ii$ at the same base point. 
Therefore, applying coordinate change rectifying the germ of the conic $\ii$ transforms $b$ to a branch $\wt b$ with the same local 
degree $s(\wt b)=s(b)$ and Puiseux exponent $r\geq2$. 
The intersection index of the branch $b$ and the conic $\ii$ equals to the intersection index of the branch $\wt b$ with its tangent 
line at the base point, that is, $rs(\wt b)=rs(b)\geq2s(b)$. Finally, 
$2d\geq|\mcb_{tr}|+2\sum_{b\in\mcb_{tan}}s(b)$. This proves (\ref{led}). 
\end{proof}

Now let us prove Theorem \ref{thalg2}. Let $\gamma$ be a curve, as in Theorem \ref{thalg2}. 
Recall that all the singular and inflection points of the curve $\gamma$ (if any) lie in the conic $\ii$, 
and its local branches in $\mcb_{tan}$ ($\mcb_{tr}$) are quadratic (respectively, quadratic and regular). 
Let us calculate their contributions  to the right-hand side of inequality (\ref{ind0}) and substitute inequality (\ref{led}). The second sum 
in the right-hand side in (\ref{ind0}) vanishes, by quadraticity. The contribution of each branch $b\in\mcb_{tr}$ to the first sum 
also vanishes, since $s(b)=1$. The total contribution of the branches from the collection $\mcb_{tan}$ to the first sum 
equals to $\sum_{b\in\mcb_{tan}}s(b)-|\mcb_{tan}|$.  
This together with (\ref{ind0}) implies that 
$$d-2\leq \sum_{b\in\mcb_{tan}}s(b)-|\mcb_{tan}|.$$
The latter right-hand side is no greater than $d-\frac12|\mcb_{tr}|-|\mcb_{tan}|$, by (\ref{led}). Therefore,
\begin{equation} \frac12|\mcb_{tr}|+|\mcb_{tan}|\leq 2.\label{inbt}\end{equation}
 Let us show that this together with  Theorem \ref{af} implies that   $\gamma$ is a conic.

Inequality (\ref{inbt}) implies that  the following three cases are possible.

Case 1): $|\mcb_{tr}|\leq4$, $\mcb_{tan}=\emptyset$. Thus, all the local branches of the curve $\gamma$ at its intersection points with 
$\ii$ lie in $\mcb_{tr}$, and hence, they 
are quadratic and regular. A point of intersection $\gamma\cap\ii$ can be singular only in the case, 
when it is a point of intersection of some two of (at most 4)  branches in $\mcb_{tr}$.  
Hence, $\gamma$ has at most two singular points (thus, all of them 
lie in a line), and all the local branches of the curve $\gamma$ at them are quadratic. 
This together with Theorem \ref{af} implies that $\gamma$ is a conic. 

Case 2): $|\mcb_{tan}|=1$, $|\mcb_{tr}|\leq2$. Let $C$ denote the base point of the unique branch in 
$\mcb_{tan}$.  Each point of intersection $\gamma\cap\ii$ distinct from the point $C$ lies in the union of (at most two) branches in 
$\mcb_{tr}$. It is singular, if and only if it is the intersection point of two latter branches. Thus, $\gamma$ has at most two singular 
points, its local branches at them are quadratic, and hence, $\gamma$ is a conic, by Theorem \ref{af}, as in the above case. 

Case 3): $|\mcb_{tan}|=2$, $\mcb_{tr}=\emptyset$. Then $\gamma$ has at most two singular points, and all its branches at them, 
which lie in $\mcb_{tan}$,  are quadratic. Hence, $\gamma$ is a conic, as in Case 1). 
 Theorem \ref{thalg2} is proved.

\section{Proof of main theorems}

\subsection{Rationally integrable $\ii$-angular billiards. Proof of Theorem \ref{conj1}}

Let $\ii\subset\cp^2$ be a conic (regular or a pair of distinct lines), and let 
$\gamma\subset\cp^2$ be an 
irreducible algebraic curve different from a line and from $\ii$ 
and  generating a rationally integrable $\ii$-angular billiard.
\begin{theorem} \label{sis}  (\cite[theorem 1]{bm}, \cite[theorem 1.2]{bm2}). All the  singular and inflection points  (if any) of the curve $\gamma$ lie in  $\ii$. 
\end{theorem}
 \begin{remark} The  above-cited theorems from \cite{bm, bm2} are stated for a polynomially integrable billiard $\Omega$. Namely, 
 for every $C^2$-smooth arc $\alpha\subset\partial\Omega$ with non-zero geodesic curvature 
 the statement  of Theorem \ref{sis} is proved  there 
  for each non-linear irreducible component $\gamma$ of the Zariski closure of the $\Sigma$-dual curve $\alpha^*$. 
  But the proofs given in \cite{bm, bm2}  remain valid in the general context of Theorem \ref{sis}. 
  \end{remark}

Each local branch of the curve $\gamma$ at a base point in $\gamma\cap\ii$ that satisfies the conditions of 
some of the statements (i), (ii-a), or (ii-b) of Theorem \ref{iii} also 
satisfies the corresponding statement, by Theorem \ref{iii}. 
Therefore, $\gamma$ satisfies the conditions of Theorem \ref{thalg2}, by 
Theorem \ref{sis}.  Hence, it is a conic, by Theorem \ref{thalg2}. 
This proves Theorem \ref{conj1}.

\subsection{Confocal billiards. Proof of  Theorem \ref{thal}} 
Let $\Omega\subset\Sigma$ be a polynomially integrable 
billiard with countably piecewise $C^2$-smooth boundary that contains a $C^2$-smooth  arc 
$\alpha$ with non-zero geodesic curvature. Let $\Psi(M)$ be its non-trivial 
homogeneous polynomial integral  of even degree $2n$: $M=[r,v]$, and  
$\Psi([r,v])$ is not a function of the squared norm $||v||^2=<Av,v>$  
in the metric of the surface $\Sigma$. One has $\Psi(M)\not\equiv c<AM,M>^n$, since $<AM,M>=<Av,v>$, by 
Proposition \ref{vectis}. 
 Let $G$ be the corresponding rational 
function (\ref{ratg}): $G\not\equiv const$. The complex Zariski closure of the $\Sigma$-dual curve $\alpha^*$ 
is an algebraic curve that contains at least one nonlinear irreducible component. Each its 
non-linear irreducible component generates a rationally integrable $\ii$-angular billiard with 
integral $G$, by Corollary \ref{iang}. Hence, it is a conic, by Theorem \ref{conj1}. Therefore, 
$\alpha$ contains a non-geodesic conical arc. 
This together with Theorem \ref{bol2} implies that the billiard $\Omega$ is countably confocal and 
proves Theorem \ref{thal}.

\subsection{Case of smooth connected boundary. Proof of Theorem \ref{thal0}} 
Let $\Omega\subset\Sigma$ be a 
polynomially integrable billiard, and let its boundary be $C^2$-smooth, connected and do not lie in a geodesic. 
Then the billiard $\Omega$ is countably confocal, by Theorem \ref{thal}. This means that its boundary $\partial\Omega$ 
contains an open dense subset $R$ that is a disjoint union of open 
arcs of confocal conics and geodesic segments,  including at least one non-geodesic conical arc. Let 
us fix the latter arc and denote it by $c$, and let $\mcc\supset c$ denote the ambient conic. 
Let us show that $\partial\Omega$ coincides either with the whole conic $\mcc$, or with its connected component.  
We consider that $c$ is a maximal arc of the conic $\mcc$ that is 
contained in the $C^2$-smooth one-dimensional submanifold $\partial\Omega\subset\Sigma$. Suppose the 
contrary: $c$ has an endpoint $Q$. The point $Q$ cannot be an accumulation point of the union of geodesic 
segments in $\partial\Omega$, by $C^2$-smoothness and since $\partial\Omega$ 
has non-zero  geodesic curvature at $Q$, as does $\mcc$: it has quadratic tangency at $Q$ with the geodesic 
tangent to $T_Q\partial\Omega$. Therefore, the point $Q$ has a neighborhood $U$ in $\Sigma$ 
such that the intersections $I_U=\partial\Omega\cap U$, $c_U=\mcc\cap U$ are connected, 
$\partial U$ is transversal to 
$\partial\Omega$, and $R\cap U\subset I_U$ consists of arcs of conics confocal to $\mcc$. Their ambient 
conics intersect $U$ by leaves of  an analytic foliation having $c_U$ as a leaf, 
since  each confocal conic pencil is locally given by  a pair of orthogonal foliations and all the conics under question 
are $C^1$-close to $\mcc$. Thus, 
the $C^2$-smooth connected  submanifold $I_U\subset U$ 
contains an open and dense subset $R\cap U$ where it is tangent to the above foliation. 
Therefore, $I_U$ is a leaf of this foliation. The leaves $I_U=\partial\Omega\cap U$ and $c_U$ 
coincide, since both of them contain an arc adjacent to $Q$ of the conic $\mcc$, by construction. 
Finally, a neighborhood $I_U$ of the point $Q$ in $\partial\Omega$ is contained in the conic 
$\mcc$. This contradicts 
maximality of the conical arc $c\subset\partial\Omega$ and proves Theorem \ref{thal0}.
  
\subsection{Proof of complexification: Theorem \ref{talc}}

 The fact that each polynomially integrable complex billiard admits a homogeneous polynomial integral of the form 
  $\Psi(M)$ is proved by a straightforward complexification of Bolotin's proof of the same 
  statement in the real case in 
  \cite{bolotin, bolotin2}. 
  This implies that the  curves $\Gamma_t$ are algebraic, as in loc. cit., and the curves  $\Sigma$-dual to 
  the non-geodesic curves $\Gamma_t$
  generate rationally integrable $\ii$-angular billiards with a common 
  rational integral,  as in  the proofs of \cite[theorem 3]{bm}, \cite[theorem 1.3]{bm2} and  Theorem \ref{ext}.  
  Afterwards  confocality of the billiard is deduced from Theorem \ref{conj1} in the same way, as in Subsection 6.2, 
  by a straightforward complexification of Theorem \ref{bol2} and its proof. In the case, when the billiard contains 
  no admissible complex geodesic of type (\ref{newgeod}), it has a non-trivial integral of degree 2 in $P$, 
  as in \cite[proposition 1]{bolotin2}. Otherwise, if it contains a complex geodesic of type (\ref{newgeod}), 
   it has a non-trivial integral of degree 4  and no non-trivial integral 
  of lower degree; the proof of this statement given in \cite[p. 123]{bolotin2} in the real case remains valid in  
  the complex case without changes.

\section{Acknowledgements}

I am grateful to Misha Bialy and Andrey Mironov for introducing me to polynomially integrable billiards, providing
the fundamental first step (their works  \cite{bm, bm2}) of the proof of the main results of the present paper and helpful discussions. 
Some important parts of the work were done during my visits 
to   Sobolev Institute at Novosibirsk and to Tel Aviv University. I wish to thank Andrey Mironov and
Misha Bialy for their invitations and hospitality and both institutions for their hospitality and support. I wish to thank Andrey  Mironov 
for his hard work  and patience of going through my proofs  and helpful remarks. I with to thank Eugenii Shustin, to whom this work 
is very  much due, for helpful discussions. Some of the main arguments in the proof, namely, the curve invariant arguments in Section 5 are  
a modified version of Shustin's arguments from our paper \cite[section 4]{gs}. I wish to thank Anatoly  Fomenko and Elena  Kudryavtseva for helpful discussions and for convincing me to extend the results to piecewise smooth case. I wish to thank Sergei Bolotin, Vladimir Dragovi\'c, Etienne Ghys, Jean-Pierre Marco, Sergei Tabachnikov, Dmitry Treshchev and Alexander Veselov  for helpful discussions. I wish to thank the referee for his hard work of going through the proofs and helpful remarks.

\end{document}